\theoremstyle{thmstyleone}%
\newtheorem{theorem}{Theorem}%  meant for continuous numbers
\newtheorem{proposition}[theorem]{Proposition}%
\newtheorem{lemma}{Lemma}[section]
\theoremstyle{thmstyletwo}%
\newtheorem{remark}{Remark}%
\theoremstyle{thmstylethree}%
\newtheorem{definition}{Definition}%
\begin{document}

\title[Article Title ]{Subspace Newton's Method for $\ell_0$-Regularized Optimization Problems with Box Constraint}

%%=============================================================%%
%% GivenName	-> \fnm{Joergen W.}
%% Particle	-> \spfx{van der} -> surname prefix
%% FamilyName	-> \sur{Ploeg}
%% Suffix	-> \sfx{IV}
%% \author*[1,2]{\fnm{Joergen W.} \spfx{van der} \sur{Ploeg}
%%  \sfx{IV}}\email{iauthor@gmail.com}
%%=============================================================%%

\author[1]{\fnm{} \sur{Yuge Ye}}\email{fujianyyg@163.com}
%\equalcont{These authors contributed equally to this work.}

%\author[2]{\fnm{} \sur{Qingna Li}}\email{qnl@bit.edu.cn}

\author*[1]{\fnm{} \sur{Qingna Li}}\email{qnl@bit.edu.cn}

\affil[1]{\orgdiv{Department of Mathematics and Statistics}, \orgname{Beijing Institute of Technology}, \orgaddress{\street{No.5, ZhongGuanCun South Street}, \city{Beijing}, \postcode{100081}, \state{Beijing}, \country{China}}}

%\affil[1]{\orgdiv{Department of Mathematics and Statistics}, \orgname{Beijing Institute of Technology}, \orgaddress{\street{No.5 Yard, Zhong Guan Cun South Street}, \city{Beijing}, \postcode{100081}, \state{Beijing}, \country{China}}}

%\affil[2]{\orgdiv{Department of mathematical science}, \orgname{Beihang University}, \orgaddress{\street{No.37 Xueyuan Road}, \city{Beijing}, \postcode{100191}, \state{Beijing}, \country{China}}}

%%==================================%%
%% Sample for unstructured abstract %%
%%==================================%%

\abstract{This paper investigates the box-constrained $\ell_0$-regularized sparse optimization problem. We introduce the concept of a $\tau$-stationary point and establish its connection to the local and global minima of the box-constrained $\ell_0$-regularized sparse optimization problem. We utilize the $\tau$-stationary points to define the support set, which we divide into active and inactive components. Subsequently, the Newton's method is employed to update the non-active variables, while the proximal gradient method is utilized to update the active variables. If the Newton's method fails, we use the proximal gradient step to update all variables. Under some mild conditions, we prove the global convergence and the local quadratic convergence rate. Finally, experimental results demonstrate the efficiency of our method.
}

%%================================%%
%% Sample for structured abstract %%
%%================================%%

\keywords{$\ell_0$-regularized sparse optimization, box constraint, global and quadratic convergence, Newton's method
}

%%\pacs[JEL Classification]{D8, H51}

%%\pacs[MSC Classification]{35A01, 65L10, 65L12, 65L20, 65L70}

\maketitle

\section{Introduction}\label{sec1}
\numberwithin{equation}{section}
Sparse optimization has important applications in many fields, such as compressed sensing \cite{candes2006robust, 1542412, donoho2006compressed}, machine learning \cite{wright2010sparse, yuan2012visual}, neural networks \cite{bian2012smoothing, dinh2020sparsity, lin2019toward}, signal and image processing \cite{bian2015linearly, chen2012non, elad2010sparse, elad2010role}, matrix completion \cite{10.1145/2184319.2184343}, variable selection\cite{Liu01122007}, pattern recognition \cite{1561263} and regression \cite{10.1111/j.2517-6161.1996.tb02080.x}.
In this paper, we consider the sparse solutions of the following box-constraint $\ell_0$ regularized optimization problem.
\begin{align}\label{BL0}
\min\limits_{x\in \mathbb{R}^n} &\phi(x) := f(x) + \lambda \|x\|_0, \nonumber \\
\mbox{s.t.} & -l \le x \le u,
\end{align}
where $f: \mathbb{R}^n \to \mathbb{R}$ is twice continuously differentiable and bounded from below, $\lambda > 0$ is the penalty parameter and $\|x\|_0$ is the $\ell_0$ norm of $x$, counting the number of non-zero elements of $x$. The bound vector $l,u \in \mathbb{R}_{+}^{n}$.

For the unconstraint $\ell_0$-regularized optimization problems, there are various existing methods, such as the iterative hard-thresholding algorithm (IHT, \cite{blumensath2008iterative}), the forward-backward splitting (FBS, \cite{attouch2013convergence}), mixed integer optimization method (MIO, \cite{bertsimas2016best}), active set Barzilar
Borwein (ABB, \cite{cheng2020active}). These methods are known as the first-order methods.
There are also several
second-order methods that have been proposed, such as the primal dual active set (PDAS, \cite{Ito_2014}), which analyzes the speed of error decay and the computational complexity of each iteration. Additionally, the support detection and rootfinding (SDAR, \cite{huang2018constructive}), which proves that under certain conditions, the algorithm converges in a finite number of steps. However these methods lack of theoretical guarantees for local quadratic convergence rates. For sparse optimization problems with the $\ell_0$ norm, Zhou et al. have proposed many innovative methods, such as
Newton hard-thresholding  pursuit (NHTP \cite{zhou2021global}), smoothing Newton's method for $\ell_{0/1}$ loss optimization (NM01 \cite{zhou2021quadratic}), subspace Newton's method (NL0R, \cite{zhou2021newton}). Notably, NL0R is the first algorithm proven to exhibit global convergence and local quadratic convergence rate for unconstraint $\ell_0$-regularized optimization problems.

For the box-constrained $\ell_0$-regularized optimization problems. To our best knowledge, there are also some methods for \eqref{BL0}, such as the Frank-Wolfe reduced dimension method (FW-RD,  \cite{liuzzi2015solving}), smoothing fast iterative hard thresholding algorithm (SFIHT, \cite{wu2021smoothing}),  adaptive projected gradient thresholding methods (APGT, \cite{zhao2015adaptive}), smoothing proximal gradient algorithm (SPG, \cite{doi:10.1137/18M1186009}),  proximal iterative hard thresholding methods for $\ell_0$-regularized convex cone programming (PIHT, \cite{lu2014iterative}),  smoothing neural network \cite{LI2021678}, extrapolation proximal iterative hard thresholding method (EPIHT, \cite{zhang2018acceleratedproximaliterativehard}) , accelerated iterative hard thresholding method (AIHT, \cite{wu2020accelerated}).
Clearly, these methods are also first-order algorithms and do not possess local quadratic convergence rates.

Motivated by the subspace Newton's method in \cite{zhou2021newton} and proximal iterative hard thresholding methods in \cite{lu2014iterative}, a natural question is whether we can develop a subspace Newton's method for \eqref{BL0}. This motivate the work in this paper. We develop a novel subspace Newton's method (BNL0R) for \eqref{BL0}.

The main contributions of this paper are summarized as the following three aspects. Firstly, we introduce a $\tau$-stationary point for \eqref{BL0} and provide its closed-form. Then we explore the relationship among the $\tau$-stationary point and the local/global minimizers of \eqref{BL0}. Secondly, we propose a novel subspace Newton's method (BNL0R) for \eqref{BL0}. In particular, we identify a support set in each iteration and divide the support set into active and inactive parts, then we apply Newton's method within the inactive parts, to search for an improved iteration point. If the Newton direction fails to be a descent direction, we utilize the iteration direction provided by the PIHT method in \cite{lu2014iterative}. Thirdly, by combing with an Armijo linesearch, we establish its global and quadratic convergence properties under proper assumptions. Finally, we demonstrate the efficiency of our proposed method through extensive numerical experiment results.

The organization of the paper is as follows. In section 2, we introduce the $\tau$-stationary point and characterise its equivalent formulation. In section 3, we propose the so-called BNL0R for problem \eqref{BL0}. In section 4, we analyze the global convergence and local quadratic convergence rate. We conduct various numerical experiments in section 5 to verify the efficiency of the proposed method. Final conclusions are given in section 6.

Notations. For $x\in \mathbb{R}^n$, $|x| := ( |x_1|, |x_2|, \cdots, |x_n|)^{\top}$ denotes the absolute value of each component of $x $ and ${\rm{supp}} (x)$ be its support set consisting of the indices of the non-zero elements. Let $\mathbb{N}_{n} := [1,2,\cdots, n]$. Given a set $\mathcal{I} \subseteq \mathbb{N}_{n}$, we denote $|\mathcal{I}|$ as its cardinality set and $\overline{\mathcal{I}}$ as its complementary set. Given a marix $H \in \mathbb{R}^{m\times n}$, let $H_{\Theta, \Gamma}$ represent its sub-matrix containing rows indexed by $\Theta \subseteq \mathbb{N}_{m}$ and columns indexed by $\Gamma \subseteq \mathbb{N}_{n}$. Let $I_{\Theta} \in \mathbb{R}^{|\Theta|\times |\Theta|}$ represent the identity matrix.  In particular, we define the sub-gradient and sub-Hessian by
\begin{align*}
\nabla_{\Theta} f(x) := [\nabla f(x)]_{\Theta},\ \nabla^2_{\Theta,\Gamma} f(x) := [\nabla^2 f(x)]_{\Theta,\Gamma},\ \nabla^2_{\Theta :} f(x) := [\nabla^2 f(x)]_{\Theta,\mathbb{N}_n} .
\end{align*}
Let $\| \cdot \|$ represents the $\ell_2$ norm. Denote $\Pi_{\Omega}(x) := \arg \min\limits_{y\in \Omega} \|x-y\|^2$ which is a project mapping to $\Omega$ .

\section{$\tau$-stationary point and its properties}
In this section, we introduce the $\tau$-stationary point and its equivalent formulation, as well as the relation with local and global minimizers of \eqref{BL0}.

Problem \eqref{BL0} can be equivalent written as the following equivalent form
\begin{align}\label{PBL0}
\min\limits_{x\in \mathbb{R}^{n}} f(x) + \lambda p(x),
\end{align}
where $p(\cdot) := \|\cdot \|_0 + \delta_{\Omega}(\cdot)$, $\Omega := \{x\in \mathbb{R}^{n}\ | \ -l \le x \le u\}$, and  $\delta_{\Omega}(\cdot)$ denotes the indicator function of $\Omega$ defined as \begin{align*}
\delta_{\Omega}(x) =
\begin{cases}
0, \        &\mbox{if}\  x\in \Omega , \\
+\infty, \  &\mbox{if}\  x\notin \Omega .\\
\end{cases}
\end{align*}
\subsection{Formulation of $\tau$-stationary point}
Inspired by \cite{zhou2021newton}, we introduce the following definition of the so-called $\tau$-stationary point.
\begin{definition}
Given $\tau > 0$, we say that $x$ is a $\tau$-stationary point of \eqref{PBL0} if the following holds
\begin{align}\label{eq-2-1}
x &\in {\rm Prox}_{\tau \lambda p(\cdot)}(x - \tau \nabla f(x)) \nonumber\\
&:= \arg \min\limits_{y\in \mathbb{R}^n} \frac{1}{2}\|y - (x-\tau \nabla f(x))\|^2 + \tau \lambda p(y) .
\end{align}
\end{definition}
Throughout this paper, we choose $\tau >0$ to satisfy  the following conditions
\begin{align}\label{eq-tau}
\tau < \frac{1}{2\lambda} a,\ a := \min\limits_{1\le i \le n} \min (l_i^2, u_i^2).
\end{align}
This is an important setting, and is easily achieved, which allows us to make the proximal operator more concise. To characterise ${\rm Prox}_{\tau \lambda p(\cdot)} (\cdot)$, we need the following Lemma.
\begin{lemma}\label{lem-prox}
For problem \eqref{eq-2-1}, let $\tau > 0$ satisfy \eqref{eq-tau}. Then
\begin{align}\label{eq-2-2-2}
[{\rm Prox}_{\tau \lambda p(\cdot )}(z)]_i =
\begin{cases}
z_i,  & \mbox{if } -l_i< z_i< u_i,\ |z_i| > \sqrt{2\tau \lambda} , \\
u_i , & \mbox{if } z_i \ge u_i ,\\
-l_i, & \mbox{if } z_i \le -l_i ,\\
z_i\ \mbox{or}\ 0, & \mbox{if }  |z_i| = \sqrt{2\tau \lambda},\\
0,  & \mbox{if }  \sqrt{2\tau \lambda} > |z_i| .
\end{cases}
\end{align}
\end{lemma}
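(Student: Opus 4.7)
The plan is to exploit the fact that $p(\cdot)$ is fully separable across coordinates, since both $\|\cdot\|_0$ and $\delta_\Omega(\cdot)$ split into sums over $i$. Therefore the proximal problem
\[
\min_{y\in \mathbb{R}^n}\ \tfrac12\|y-z\|^2 + \tau\lambda\bigl(\|y\|_0+\delta_\Omega(y)\bigr)
\]
decouples into $n$ one-dimensional subproblems
\[
\min_{y_i\in[-l_i,u_i]}\ \tfrac12(y_i-z_i)^2 + \tau\lambda\,\mathbf{1}_{\{y_i\ne 0\}},
\]
so it suffices to establish the componentwise formula in \eqref{eq-2-2-2}.

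For each $i$, I would split the one-dimensional problem according to whether the optimal $y_i$ is zero or not. If $y_i=0$, the objective value is $\frac12 z_i^2$. If $y_i\ne 0$, the problem reduces to projecting $z_i$ onto $[-l_i,u_i]$, producing three candidate minimizers: $y_i=u_i$ when $z_i\ge u_i$ (cost $\frac12(z_i-u_i)^2+\tau\lambda$), $y_i=-l_i$ when $z_i\le -l_i$ (cost $\frac12(z_i+l_i)^2+\tau\lambda$), and $y_i=z_i$ when $-l_i<z_i<u_i$ (cost $\tau\lambda$, since $z_i\ne 0$ in the relevant regime). A direct comparison of these values with $\tfrac12 z_i^2$ will yield every branch of \eqref{eq-2-2-2}.

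The only nontrivial step is ruling out that the zero solution beats the boundary value when $z_i\ge u_i$ or $z_i\le -l_i$, and this is precisely where condition \eqref{eq-tau} enters. For $z_i\ge u_i>0$, the gap between the zero-cost and the nonzero-cost is
\[
\tfrac12 z_i^2 - \tfrac12(z_i-u_i)^2 - \tau\lambda = u_i z_i - \tfrac12 u_i^2 - \tau\lambda \ \ge\ \tfrac12 u_i^2 - \tau\lambda,
\]
and by \eqref{eq-tau} one has $2\tau\lambda < a\le u_i^2$, so the gap is strictly positive and $y_i=u_i$ wins; the case $z_i\le -l_i$ is symmetric using $l_i^2\ge a$. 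In the interior regime $-l_i<z_i<u_i$, comparing $\tau\lambda$ with $\tfrac12 z_i^2$ immediately gives the threshold $|z_i|\gtreqqless\sqrt{2\tau\lambda}$, including the tie at $|z_i|=\sqrt{2\tau\lambda}$ where both $z_i$ and $0$ are minimizers.

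I do not anticipate a real obstacle; the proof is essentially a careful case analysis. The main care is to check consistency on the transition thresholds $z_i=\pm u_i,\pm l_i$ (where the interior and boundary branches agree because the projection formulas coincide) and to verify that the assumption \eqref{eq-tau} is sharp enough to prevent the thresholding value $\sqrt{2\tau\lambda}$ from exceeding $\min(l_i,u_i)$, so that the five cases listed in \eqref{eq-2-2-2} genuinely partition the real line.
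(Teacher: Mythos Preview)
Your proposal is correct and follows essentially the same route as the paper: separability reduces to a one-dimensional problem, one compares the value at $y_i=0$ against the value at the constrained minimizer of the quadratic part, and condition \eqref{eq-tau} is invoked exactly to show that the boundary value $u_i$ (resp.\ $-l_i$) strictly beats $0$ when $z_i\ge u_i$ (resp.\ $z_i\le -l_i$). The paper's computation of the gap in that case is the same as yours, just written as $h(u_i)-h(0)=\tfrac12 u_i^2-u_iz_i+\tau\lambda<0$ rather than in your equivalent form.
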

\begin{proof}
One can observe that the objective function \eqref{PBL0} is separable. Thus, we can decompose the solution of \eqref{eq-2-2-2} and consider them individually. That is, we consider the one-dimensional minimization problem as follows
\begin{align*}
\min\limits_{y_i \in \mathbb{R}} \frac{1}{2}(y_i - z_i)^2 + \tau \lambda \|y_i\|_0 + \delta_{\Omega_i}(y_i) := h(y_i) .
\end{align*}
where $\Omega_i := \left\{ x_i \in \mathbb{R}\, \mid \, -l_i \le x_i \le u_i \right\}$.

It holds that $h(y_i)$ have the following  three cases based on different range of $y_i$
\begin{align*}
h(y_i) =
\begin{cases}
\frac{1}{2}z_i^2,\ &y_i = 0 ,\\
\frac{1}{2}(y_i - z_i)^2 + \tau \lambda,\ &y_i \in \Omega_i\setminus \{0\} ,\\
+\infty,\ &y_i \notin \Omega_i .\\
\end{cases}
\end{align*}
If $y_i \in \Omega_i\setminus \{0\}$, then the minimization of $h(y_i)$ can be achieved at $y_i = z_i$, provided that $z_i\in \Omega_i\setminus \{0\}$. The minimal value is $h(y_i) = \tau \lambda$. Therefore, the minimization of $h(y_i)$ can be only achieved when $y\in [-l_i,u_i]$.

Now, we consider more details based on the value of $z_i$.

\textbf{Case 1}. If $-l_i< z_i< u_i,\ |z_i| > \sqrt{2\tau \lambda}$, then  $h(0) = \frac{1}{2}z_i^2 > h(z_i) = \tau \lambda$. In this case, $z_i$ is the minimizer of $h(y_i)$ with the minimal value $h(z_i) = \tau \lambda$. That is $\left[\Pi_{\Omega}(z)\right]_i = z_i$.

\textbf{Case 2}. If $z_i \ge u_i$, then $\min\limits_{y_i \in \Omega_i\setminus \{0\}} \frac{1}{2}(y_i - z_i)^2 + \tau \lambda = \frac{1}{2}(u_i - z_i)^2 + \tau \lambda = h(u_i)$. In this case,
$h(u_i) - h(0) = \frac{1}{2}(u_i - z_i)^2 + \tau \lambda - \frac{1}{2}z_i^2 = \frac{1}{2}u_i^2 -u_iz_i + \tau \lambda < 0,$
$$ z_i \ge u_i \Rightarrow -u_iz_i \le -u_i^2 < -2\tau \lambda.$$
Therefore, $\frac{1}{2}u_i^2 -u_iz_i + \tau \lambda < -\tau\lambda + \tau\lambda = 0$.
That is  $h(u_i) < h(0)$. As a result, $u_i$ is the minimizer in this case.

\textbf{Case 3}. Similar to \textbf{Case 2}, we can show that if $z_i \le -l_i$, then $h(-l_i) < h(0)$. Therefore, $-l_i$ is the minimizer for $z_i \le -l_i$.

\textbf{Case 4}. If $|z_i| = \sqrt{2\tau \lambda}$. In this case, we can see that similar to \textbf{Case 1}, $h(0) = \frac{1}{2}z_i^2 = h(z_i) = \tau \lambda$. Therefore, 0 and $z_i$ are both minimizer of $h(y_i)$.

\textbf{Case 5}. If $\sqrt{2\tau \lambda} > |z_i|$. In this case, $h(z_i) - h(0) > 0$ and 0 is the minimizer of $h(y_i)$. The proof is complete.
\end{proof}
Then we can equivalently characterize a $\tau$-stationary point by the conditions below.
\begin{lemma}\label{lemma-2}
$x$ is $\tau$-stationary point for \eqref{BL0}, if only if
\begin{align}\label{eq-2-3}
\begin{cases}
\nabla_i f(x) = 0,  & \mbox{if } -l_i < x_i < u_i,\ |x_i | > \sqrt{2\tau \lambda}, \\
x_i - u_i = 0 , & \mbox{if }   \nabla_i f(x) \le 0,\\
x_i + l_i = 0, & \mbox{if }  \nabla_i f(x) \ge 0 ,\\
x_i = 0,  & \mbox{if } \sqrt{2\lambda/ \tau } \ge | \nabla_i f(x)|.
\end{cases}
\end{align}
\end{lemma}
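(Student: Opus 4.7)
The plan is to apply Lemma \ref{lem-prox} with $z := x - \tau \nabla f(x)$ and translate each of its five cases into a condition on $\nabla_i f(x)$, since the definition of $\tau$-stationarity simply says $x_i = [{\rm Prox}_{\tau \lambda p(\cdot)}(z)]_i$ for every $i$. Because the problem is separable across coordinates, I can argue one coordinate at a time and prove the forward and backward implications together.

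For the \textbf{forward} direction, I would substitute $z_i = x_i - \tau \nabla_i f(x)$ into the five cases of \eqref{eq-2-2-2}. In Case 1 the conclusion $x_i = z_i$ forces $\nabla_i f(x) = 0$, while the hypotheses $-l_i < z_i < u_i$ and $|z_i| > \sqrt{2\tau\lambda}$ become exactly $-l_i < x_i < u_i$ and $|x_i| > \sqrt{2\tau\lambda}$. In Case 2 the conclusion $x_i = u_i$ combined with $z_i \ge u_i$ reduces to $u_i - \tau \nabla_i f(x) \ge u_i$, i.e.\ $\nabla_i f(x) \le 0$. Case 3 is symmetric and yields $\nabla_i f(x) \ge 0$. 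In Case 5 we have $x_i = 0$ so $z_i = -\tau \nabla_i f(x)$, and the hypothesis $|z_i| < \sqrt{2\tau\lambda}$ rearranges to $|\nabla_i f(x)| < \sqrt{2\lambda/\tau}$; the degenerate Case 4 gives the same conclusion at equality (choosing the branch $x_i = 0$), so the two cases merge into the single inequality $\sqrt{2\lambda/\tau} \ge |\nabla_i f(x)|$ stated in \eqref{eq-2-3}.

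For the \textbf{backward} direction, I would start from \eqref{eq-2-3} and, for each branch, verify that $z_i = x_i - \tau \nabla_i f(x)$ satisfies the hypothesis of the corresponding case in Lemma \ref{lem-prox}, thereby recovering $x_i \in [{\rm Prox}_{\tau\lambda p(\cdot)}(z)]_i$. The only delicate part is making sure the four branches of \eqref{eq-2-3} are exhaustive when combined with the assumption $\tau < a/(2\lambda)$; this is where the bound \eqref{eq-tau} enters, because it guarantees $\sqrt{2\tau\lambda} < \min_i \min(l_i, u_i)$, so the regions $\{z_i \ge u_i\}$, $\{z_i \le -l_i\}$ and $\{|z_i| \le \sqrt{2\tau\lambda}\}$ are consistent with Cases 2, 3 and 4/5 respectively without overlap, and the Case 1 region covers everything else.

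The main obstacle I anticipate is bookkeeping at the boundary between Cases 4 and 5: the proximal operator is multi-valued at $|z_i| = \sqrt{2\tau\lambda}$, and one must observe that when $x_i = 0$ the weak inequality $\sqrt{2\lambda/\tau}\ge|\nabla_i f(x)|$ is the right statement, whereas when $|\nabla_i f(x)| = \sqrt{2\lambda/\tau}$ the nonzero branch $x_i = z_i = -\tau\nabla_i f(x)$ is also admissible and belongs instead to Case 1 after one checks $|x_i| = \sqrt{2\tau\lambda}$ lies strictly inside $(-l_i,u_i)$ thanks to \eqref{eq-tau}. Beyond this, the proof is a direct case-by-case verification and should be short.
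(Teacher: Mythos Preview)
Your proposal is correct and follows essentially the same route as the paper: apply Lemma~\ref{lem-prox} coordinate-wise with $z_i = x_i - \tau\nabla_i f(x)$, substitute the fixed-point condition $x_i \in [{\rm Prox}_{\tau\lambda p(\cdot)}(z)]_i$ into each of the five cases, and then consolidate (in particular merging the two sub-branches of Case~4 with Cases~1 and~5). One small slip in your last paragraph: in the nonzero branch of Case~4 the condition $x_i = z_i$ forces $\nabla_i f(x) = 0$, so $|x_i| = \sqrt{2\tau\lambda}$ (not $|\nabla_i f(x)| = \sqrt{2\lambda/\tau}$); once you correct this the bookkeeping goes through exactly as you outlined.
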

\begin{proof}
Since $x$ is $\tau$-stationary point, it is equivalent to $x \in {\rm Prox}_{\tau \lambda p(\cdot)}(x - \tau \nabla f(x))$. By Lemma \ref{lem-prox}, we obtain that
\begin{align*}
0 &\in x_i - {\rm Prox}_{\tau \lambda p(\cdot )}(x - \tau \nabla f(x))_i \\
&=
\begin{cases}
\tau \nabla_i f(x),  & \mbox{if } -l_i < x_i - \tau \nabla_i f(x) < u_i, |x_i - \tau \nabla_i f(x)| > \sqrt{2\tau \lambda}, \\
x_i - u_i , & \mbox{if } x_i - \tau \nabla_i f(x) \ge u_i ,\\
x_i + l_i, & \mbox{if } x_i - \tau \nabla_i f(x) \le -l_i ,\\
\tau \nabla_i f(x)\ \mbox{or}\ x_i, & \mbox{if }  |x_i - \tau \nabla_i f(x)| = \sqrt{2\tau \lambda},\\
x_i,  & \mbox{if }  \sqrt{2\tau \lambda} > |x_i - \tau \nabla_i f(x)|,
\end{cases}
\end{align*}
where $i\in [1,\cdots,n]$. It is equivalent to the following form
\begin{align}
\begin{cases}
\nabla_i f(x) = 0,  & \mbox{if } -l_i< x_i< u_i,\ |x_i| > \sqrt{2\tau \lambda}, \\
x_i - u_i = 0 , & \mbox{if } x_i - \tau \nabla_i f(x) \ge u_i ,\\
x_i + l_i = 0, & \mbox{if } x_i - \tau \nabla_i f(x) \le -l_i ,\\
\nabla_i f(x) = 0, & \mbox{if }  |x_i | = \sqrt{2\tau \lambda},\\
x_i = 0, & \mbox{if }  | \tau \nabla_i f(x)| = \sqrt{2\tau \lambda},\\
x_i = 0,  & \mbox{if }  \sqrt{2\tau \lambda} > |\tau \nabla_i f(x)|.
\end{cases}
\end{align}
Where case $|x_i| = \sqrt{2\tau\lambda}$ and $| \tau \nabla_i f(x)| = \sqrt{2\tau \lambda}$ are defined from $|x_i- \tau \nabla_i f(x)| = \sqrt{2\tau \lambda}$. For $x_i = u_i$ while $x_i- \tau \nabla_i f(x) \ge u_i$, there is $- \tau \nabla_i f(x) \ge 0$. Similarly, there is $- \tau \nabla_i f(x) \le 0$ for $x_i = -l_i$ while $x_i - \tau \nabla_i f(x) \le -l_i$. Summarizing the case of $x_i=0$ leads to $x_i = 0$, if $|\tau \nabla_i f(x)| \le \sqrt{2\tau \lambda}$. This gives the result.
\end{proof}
Our subsequent results require the strong smoothness and convexity of $f$.
\begin{definition}
$f$ is strongly smooth about constant $L>0$ if
\begin{align}\label{eq-2-4}
f(z) \le f(x) + \left\langle \nabla f(x),z-x\right\rangle + (L/2)\|z-x\|^2,\ \forall \ x,z\in \mathbb{R}^n ,
\end{align}
$f$ is strongly convex about constant $\ell > 0$ if
\begin{align}\label{eq-2-5}
f(z) \ge f(x) + \left\langle \nabla f(x),z-x\right\rangle + (\ell/2)\|z-x\|^2,\ \forall \ x,z\in \mathbb{R}^n .
\end{align}
\end{definition}
We also need the following proposition.
\begin{proposition} {\rm{\cite[Lemma 2.1]{calamai1987projected}} } \label{pro-projection}
If $\Omega \subset \mathbb{R}^n$ is a nonempty closed convex set. Let  $\Pi_{\Omega}(\cdot)$ be the projection into $\Omega$.
If $y \in \Omega$ then
$$\left\langle \Pi_{\Omega}(x) - x, y - \Pi_{\Omega}(x) \right\rangle \ge 0, \forall x \in \mathbb{R}^n .$$
\end{proposition}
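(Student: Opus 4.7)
The plan is to exploit the variational characterization of the projection: $\Pi_{\Omega}(x)$ is by definition the unique minimizer of $\|z-x\|^2$ over $z \in \Omega$, and since $\Omega$ is convex we can test this minimality along line segments issuing from $\Pi_{\Omega}(x)$ toward arbitrary $y\in\Omega$. Well-definedness (existence and uniqueness of the minimizer) follows from $\Omega$ being nonempty closed convex together with the strong convexity of $\|\cdot - x\|^2$, so I can use $\Pi_{\Omega}(x)$ without further justification.

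First, fix $x\in\mathbb{R}^n$ and $y\in\Omega$, and set $z_t := (1-t)\Pi_{\Omega}(x) + ty = \Pi_{\Omega}(x) + t\bigl(y-\Pi_{\Omega}(x)\bigr)$ for $t\in[0,1]$. By convexity of $\Omega$, $z_t\in\Omega$ for all such $t$. Next, consider the scalar function
\begin{equation*}
\varphi(t) := \|z_t - x\|^2 = \|\Pi_{\Omega}(x) - x\|^2 + 2t\langle \Pi_{\Omega}(x) - x,\, y - \Pi_{\Omega}(x)\rangle + t^2\|y - \Pi_{\Omega}(x)\|^2.
\end{equation*}
Since $\Pi_{\Omega}(x)$ is the minimizer of $\|\cdot - x\|^2$ over $\Omega$ and $z_t\in\Omega$, we have $\varphi(t)\ge \varphi(0)$ for all $t\in[0,1]$.

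Finally, I would compute the right derivative at $t=0$: dividing $\varphi(t)-\varphi(0)\ge 0$ by $t>0$ and letting $t\downarrow 0$ yields $\varphi'(0^+) = 2\langle \Pi_{\Omega}(x)-x,\, y-\Pi_{\Omega}(x)\rangle \ge 0$, which after dividing by $2$ is exactly the claimed inequality. There is no real obstacle here, since this is the classical first-order optimality condition for a convex-set projection; the only thing one must be careful about is restricting $t$ to $[0,1]$ so that $z_t$ stays in $\Omega$ without invoking anything beyond convexity.
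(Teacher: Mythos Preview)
Your argument is correct and is the standard proof of the projection inequality via the first-order optimality condition along line segments in $\Omega$. Note that the paper does not actually prove this proposition; it simply quotes it from \cite{calamai1987projected}, so there is no ``paper's own proof'' to compare against --- your derivation is exactly the classical one that underlies the cited reference.
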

To express the solution of \eqref{eq-2-1} more explicitly, we define the following indexes
\begin{align}\label{eq-2-6}
&\Theta :=\Theta_{\tau}(x) = \{ i\in \mathbb{N}_n\ | \ -l_i < x_i - \tau \nabla_{i} f(x) < u_i, |x_i - \tau \nabla_{i} f(x)|\ge \sqrt{2\tau \lambda} \} , \nonumber\\
&\Gamma :=\Gamma_{\tau}(x) = \{ i\in \mathbb{N}_n \ | \ x_i - \tau \nabla_{i} f(x) \ge u_i\ \text{or}\  x_i - \tau \nabla_{i} f(x) \le -l_i \} ,\nonumber\\
&\Gamma^u := \Gamma^u_{\tau}(x) = \{ i\in \mathbb{N}_n \ | \ x_i - \tau \nabla_{i} f(x) \ge u_i \} ,\nonumber\\
&\Gamma^l :=\Gamma^l_{\tau}(x) = \{i\in \mathbb{N}_n \ |  x_i - \tau \nabla_{i} f(x) \le -l_i \} ,\nonumber\\
&\mathcal{I} := \Theta\cup \Gamma  ,\nonumber \\
&\overline{\mathcal{I}} :=\mathbb{N}_{n} \setminus (\Theta\cup \Gamma) .
\end{align}
We call $\Gamma$ as the active set and $\Theta$ as the inactive set. When the iterate gets gradually close to a minimizer, these variables in the active and non-active set can be accurately identified under some mild conditions. In particular, we use $\Theta_*$, $\Gamma_*$, $\mathcal{I}_*$, $\overline{\mathcal{I}}_*$ to denote the corresponding index sets at $x^*$. Based on the above set, we introduce the following a system of equations
\begin{align}\label{eq-2-7}
F_{\tau}(x;\Theta ; \Gamma ; \overline{\mathcal{I}})  :=
 \begin{bmatrix}
 \nabla_{\Theta}f(x) \\
 x_{\Gamma} - \left[\Pi_{\Omega}(x - \tau \nabla f(x))\right]_{\Gamma} \\
 x_{\overline{\mathcal{I}}}
 \end{bmatrix}
 =
  \begin{bmatrix}
 \nabla_{\Theta}f(x) \\
 x_{\Gamma^u} - u_{\Gamma^u} \\
 x_{\Gamma^l} - u_{\Gamma^l} \\
 x_{\overline{\mathcal{I}}}
 \end{bmatrix}
 = 0.
\end{align}
The relationship between \eqref{eq-2-1} and \eqref{eq-2-7} is revealed by the following theorem.
\begin{proposition} \label{pro-3}
For any $x \in \Omega$, by letting $z = x - \tau \nabla f(x)$, it holds that
\begin{align*}
 0 = x-{\rm{Prox}}_{\tau \lambda p(\cdot )}(z) \Rightarrow
 F_{\tau}(x;\Theta ; \Gamma ; \overline{\mathcal{I}}) = 0
 \Rightarrow
 x \in {\rm{Prox}}_{\tau \lambda p(\cdot )}(z).
\end{align*}
\end{proposition}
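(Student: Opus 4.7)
The plan is to reduce both implications to a coordinatewise case analysis using the closed form of the proximal operator established in Lemma \ref{lem-prox}, combined with the index-set definitions in \eqref{eq-2-6}. The first preliminary observation is that the assumption \eqref{eq-tau}, namely $\tau < a/(2\lambda)$ with $a = \min_i \min(l_i^2, u_i^2)$, implies $\sqrt{2\tau\lambda} < \min_i \min(l_i, u_i)$. Consequently, whenever $|z_i| \le \sqrt{2\tau\lambda}$ one automatically has $-l_i < z_i < u_i$, so that the five cases in Lemma \ref{lem-prox} partition $\mathbb{N}_n$ exactly into the four groups $\Theta, \Gamma^u, \Gamma^l, \overline{\mathcal{I}}$, with case 4 of the lemma absorbed into $\Theta$ (by the nonstrict inequality $|z_i| \ge \sqrt{2\tau\lambda}$ in its definition).

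For the forward implication, I would assume $x \in \mathrm{Prox}_{\tau\lambda p(\cdot)}(z)$ and read off $x_i$ on each block using Lemma \ref{lem-prox}. For $i \in \Theta$ we land in case 1 (or case 4 with the $z_i$ branch) and get $x_i = z_i$, which forces $\tau \nabla_i f(x) = x_i - z_i = 0$; for $i \in \Gamma^u$, case 2 gives $x_i = u_i$; for $i \in \Gamma^l$, case 3 gives $x_i = -l_i$; for $i \in \overline{\mathcal{I}}$, case 5 gives $x_i = 0$. Stacking these four block identities produces exactly $F_\tau(x;\Theta;\Gamma;\overline{\mathcal{I}}) = 0$.

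For the reverse implication, I would start from $F_\tau(x;\Theta;\Gamma;\overline{\mathcal{I}}) = 0$ and verify that each coordinate of $x$ belongs to the set $[\mathrm{Prox}_{\tau\lambda p(\cdot)}(z)]_i$ given by Lemma \ref{lem-prox}. On $\Theta$ the condition $\nabla_\Theta f(x) = 0$ yields $x_\Theta = z_\Theta$, matching case 1 (or the $z_i$ branch of case 4); on $\Gamma^u$ and $\Gamma^l$ the equalities $x_i = u_i$ and $x_i = -l_i$ match cases 2 and 3 directly. The only nontrivial block is $\overline{\mathcal{I}}$: by negating the defining memberships of $\Theta$ and $\Gamma$, I would deduce $-l_i < z_i < u_i$ together with $|z_i| < \sqrt{2\tau\lambda}$, placing us in case 5 where $0$ is the unique prox value, matching $x_{\overline{\mathcal{I}}} = 0$.

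The main obstacle is conceptual rather than computational: the boundary case $|z_i| = \sqrt{2\tau\lambda}$ in Lemma \ref{lem-prox} has two admissible prox values, which is precisely why the second arrow is stated as set membership ``$\in$'' rather than equality. Handling it cleanly relies on the observation above that the definition of $\Theta$ uses the nonstrict inequality $|z_i| \ge \sqrt{2\tau\lambda}$, so boundary indices are routed consistently into $\Theta$ and the branch $x_i = z_i$ is the one selected by the $\nabla_\Theta f(x) = 0$ condition encoded in $F_\tau$.
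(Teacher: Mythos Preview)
Your treatment of the second implication is fine and essentially matches the paper's (the paper routes through Lemma~\ref{lemma-2} rather than directly through Lemma~\ref{lem-prox}, but the content is the same).

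The first implication, however, has a genuine gap. You weaken the hypothesis from $0 = x - \mathrm{Prox}_{\tau\lambda p(\cdot)}(z)$ to $x \in \mathrm{Prox}_{\tau\lambda p(\cdot)}(z)$, and then assert that for $i \in \Theta$ ``we land in case 1 (or case 4 with the $z_i$ branch).'' But nothing in the weaker hypothesis forces the $z_i$ branch in case~4. Concretely, suppose $i \in \Theta$ with $|z_i| = \sqrt{2\tau\lambda}$; then $[\mathrm{Prox}_{\tau\lambda p(\cdot)}(z)]_i = \{0, z_i\}$, so $x_i = 0$ is compatible with $x \in \mathrm{Prox}_{\tau\lambda p(\cdot)}(z)$. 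In that situation $z_i = x_i - \tau\nabla_i f(x) = -\tau\nabla_i f(x) \ne 0$, hence $\nabla_i f(x) \ne 0$ and $F_\tau(x;\Theta;\Gamma;\overline{\mathcal{I}}) \ne 0$. So the implication you actually argue, $x \in \mathrm{Prox}_{\tau\lambda p(\cdot)}(z) \Rightarrow F_\tau = 0$, is false.

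The fix is exactly the step the paper takes: the equality $0 = x - \mathrm{Prox}_{\tau\lambda p(\cdot)}(z)$ means the proximal set is the singleton $\{x\}$, which is incompatible with any index $i$ having $|z_i| = \sqrt{2\tau\lambda}$ (since then the set would contain two distinct points). With the boundary case excluded, every $i \in \Theta$ lies strictly in case~1 of Lemma~\ref{lem-prox}, and your block-by-block reading goes through. You identified the multivaluedness issue in your final paragraph but attributed it only to the second arrow; it is in fact the reason the \emph{first} arrow requires the singleton hypothesis rather than mere membership.
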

\begin{proof}
For the first part, if we have $x = {\rm Prox}_{\tau \lambda p(\cdot)} (z)$, namely, ${\rm Prox}_{\tau \lambda p(\cdot)} (z)$ is a singleton, then there is no index $i \in \Theta$ such that $|z_i| = \sqrt{2\tau \lambda}$ by \eqref{eq-2-2-2}. This and \eqref{eq-2-2-2} give rise to $ [{\rm Prox}_{\tau \lambda p(\cdot)} (z)]_{\Theta} = z_{\Theta} $. As a consequence,
\begin{align*}
0 = x - {\rm{Prox}}_{\tau \lambda p(\cdot)} (z) \stackrel{\eqref{eq-2-2-2}}{=}  \begin{bmatrix}
 x_{\Theta} \\
 x_{\Gamma} \\
 x_{\overline{\mathcal{I}}}
 \end{bmatrix}-
  \begin{bmatrix}
 z_{\Theta} \\
 \left[\Pi_{\Omega}(z)\right]_{\Gamma} \\
 0
 \end{bmatrix}
 =
  \begin{bmatrix}
 \tau \nabla_{\Theta}f(x) \\
 x_{\Gamma} - \left[\Pi_{\Omega}(z)\right]_{\Gamma} \\
 x_{\overline{\mathcal{I}}}
 \end{bmatrix},
\end{align*}
which suffices to obtain that $F_{\tau}(x;\Theta ; \Gamma ; \overline{\mathcal{I}}) = 0$.

For the second part, for any $i\in \Theta$, we have $\nabla_i f(x)=0$ from \eqref{eq-2-7}. Together with \eqref{eq-2-6}, there is $-l_i < x_i <u_i$ and $|x_i| \ge \sqrt{2\tau \lambda}$ . For any $i\in \Gamma$, we have $x_i = \left[\Pi_{\Omega}(x - \tau \nabla f(x))\right]_i$ from \eqref{eq-2-7} and $ x_i - \tau \nabla_i f(x) > u_i $ or $ x_i - \tau \nabla_i f(x) < -l_i $ from \eqref{eq-2-6}. If $ x_i - \tau \nabla_i f(x) > u_i $, there is $x_i = u_i$. If $ x_i - \tau \nabla_i f(x) < -l_i $, there is $x_i = -l_i$. For any $i \in \overline{\mathcal{I}}$, we have $x_i = 0$ from \eqref{eq-2-7} and $|\tau \nabla_i f(x)| = |x_i - \tau \nabla_i f(x)| < \sqrt{2\tau \lambda}$ from \eqref{eq-2-6}. Those together with Lemma \ref{lemma-2} completes the proof.
\end{proof}
\begin{remark} Note that, if $\nabla f(0)= 0$, then $0$ is a $\tau$-stationary point of the problem \eqref{BL0}. This case is trivial. However, we are interested in the non-trivial case. Thus, from now on, we always suppose $\nabla f(0) \neq 0$. Let $\tau \le \frac{a}{2\max\limits_{i} |\nabla_i f(0)|}$ where $a$ defined in \eqref{eq-tau}. Similar to \cite{zhou2021newton}, denote
\begin{align}\label{eq-2.13}
\underline{\lambda} := \min\limits_{i} \left\{ \frac{\tau}{2}|\nabla_i f(0)|^2\ :\ \nabla_i f(0) \neq 0  \right\}, \overline{\lambda} := \max\limits_{i} \frac{\tau}{2}|\nabla_i f(0)|^2 .
\end{align}
Then $a > a/2 \ge | 0 - \tau \nabla_i f(0)| \ge \sqrt{2\tau \lambda}$ for any $i\in \mathbb{K} := \{i \in \mathbb{N}_n : \nabla_i f(0) \neq 0\}$, resulting in $\mathbb{K} \subseteq \Theta_{\tau}(0)$ and consequently, $ F_{\tau}(0; \Theta_{\tau}(0); \Gamma_{\tau}(0); \overline{\mathcal{I}}_{\tau}(0)) \neq 0$ due to $\nabla_{\mathbb{K}} f(0) \neq 0$. Namely, 0 is not a $\tau$-stationary point of \eqref{PBL0}.
\end{remark}
\subsection{Relation to local/global minimizer}
Now we ready to establish the relationships between $\tau$-stationary point and a local/global minimizer of \eqref{PBL0}.
\begin{lemma}
Let $f:\mathbb{R}^n \to \mathbb{R}$ is continuously differentiable, $\mathcal{G}_* \subseteq \mathbb{N}_{n}$. If $x^* \in \mathbb{R}^n$ satisfies $x^*_{\mathcal{G}_*} = \left[\Pi_{\Omega}(x^* - \tau \nabla f(x^*))\right]_{\mathcal{G}_*} $, then it holds that
\begin{align}\label{eq-2-6-2}
\left\langle \nabla_{\mathcal{G}_*}f(x^*), (x - x^*)_{\mathcal{G}_*} \right\rangle \ge 0 ,\ \forall \ x\in \Omega.
\end{align}
\end{lemma}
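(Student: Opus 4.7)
The plan is to apply Proposition \ref{pro-projection} in a coordinate-separated form, exploiting the key fact that $\Omega = \prod_{i=1}^{n}[-l_i,u_i]$ is a Cartesian product, so the metric projection $\Pi_{\Omega}$ acts coordinate-wise and the variational inequality characterizing the projection decouples across components.

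First, I would set $z := x^* - \tau \nabla f(x^*)$ and observe that because $\Omega$ is a product of intervals, Proposition \ref{pro-projection} can be applied to each one-dimensional factor $[-l_i,u_i]$ separately. Concretely, for any index $i \in \mathbb{N}_n$ and any $y_i \in [-l_i,u_i]$, one has
\begin{equation*}
\bigl([\Pi_{\Omega}(z)]_i - z_i\bigr)\bigl(y_i - [\Pi_{\Omega}(z)]_i\bigr) \ge 0.
\end{equation*}

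Next, I would restrict attention to indices $i \in \mathcal{G}_*$. By hypothesis, $x^*_i = [\Pi_{\Omega}(z)]_i$ for every such $i$, and substituting $z_i = x^*_i - \tau \nabla_i f(x^*)$ into the coordinate-wise inequality above with the choice $y_i = x_i$ (admissible because $x \in \Omega$ forces $x_i \in [-l_i,u_i]$) collapses the left-hand side to $\tau \nabla_i f(x^*)(x_i - x^*_i) \ge 0$. Dividing by $\tau > 0$ and summing over $i \in \mathcal{G}_*$ yields \eqref{eq-2-6-2}.

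No real obstacle arises here; the only thing to be careful about is justifying the coordinate-wise application of Proposition \ref{pro-projection}, which is immediate from the separability of the box $\Omega$ together with the fact that each factor $[-l_i,u_i]$ is itself a nonempty closed convex subset of $\mathbb{R}$. A slightly more pedestrian alternative would be to apply Proposition \ref{pro-projection} globally, then note that both $x^*$ and the vector $\widetilde{x}$ defined by $\widetilde{x}_{\mathcal{G}_*} = x_{\mathcal{G}_*}$ and $\widetilde{x}_{\overline{\mathcal{G}_*}} = x^*_{\overline{\mathcal{G}_*}}$ lie in $\Omega$, and that the inner product reduces to the $\mathcal{G}_*$-block because $x^*$ and $\Pi_{\Omega}(z)$ agree on $\mathcal{G}_*$; but the coordinate-wise route above is cleaner.
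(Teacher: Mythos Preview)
Your proposal is correct and uses essentially the same approach as the paper: both apply the variational characterization of the projection (Proposition \ref{pro-projection}) on the $\mathcal{G}_*$-block of the box $\Omega$, combine it with the hypothesis $x^*_{\mathcal{G}_*} = [\Pi_{\Omega}(z^*)]_{\mathcal{G}_*}$, and read off the inequality after dividing by $\tau$. Your coordinate-wise route is in fact slightly more direct than the paper's version, which inserts an algebraic decomposition $-\nabla_{\mathcal{G}_*} f(x^*) = (z^*_{\mathcal{G}_*} - x^*_{\mathcal{G}_*}) + (\tau-1)\nabla_{\mathcal{G}_*} f(x^*)$ before invoking the projection inequality and then cancels the $(\tau-1)$ term at the end; your argument avoids this detour entirely.
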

\begin{proof}
For convenience denote $z^* = x^* - \tau \nabla f(x^*)$. By Proposition \ref{pro-projection}, we obtain that
\begin{align}\label{eq-2-6-2-3}
\left\langle x_{\mathcal{G}_*} - \left[\Pi_{\Omega}(z^*)\right]_{\mathcal{G}_*}, z^*_{\mathcal{G}_*} - \left[\Pi_{\Omega}(z^*)\right]_{\mathcal{G}_*} \right\rangle \le 0  .
\end{align}
then, by the definition of $z^*$ and \eqref{eq-2-6-2-3}, it holds that
\begin{align*}
&-\left\langle \nabla_{\mathcal{G}_*}f(x^*), (x - x^*)_{\mathcal{G}_*} \right\rangle \\
=& \left\langle x_{\mathcal{G}_*}^* - \tau \nabla_{\mathcal{G}_*}f(x^*) - x_{\mathcal{G}_*}^* + (\tau - 1)\nabla_{\mathcal{G}_*}f(x^*), (x - x^*)_{\mathcal{G}_*} \right\rangle \\
=& \left\langle z^*_{\mathcal{G}_*} - \left[\Pi_{\Omega}(z^*)\right]_{\mathcal{G}_*} , x_{\mathcal{G}_*} -\left[\Pi_{\Omega}(z^*)\right]_{\mathcal{G}_*} \right\rangle  +\left\langle (\tau - 1)\nabla_{\mathcal{G}_*}f(x^*), (x - x^*)_{\mathcal{G}_*} \right\rangle \\
\le & \left\langle (\tau - 1)\nabla_{\mathcal{G}_*}f(x^*), (x - x^*)_{\mathcal{G}_*} \right\rangle,
\end{align*}
which gives \eqref{eq-2-6-2}.
\end{proof}

\begin{proposition} For the problem \eqref{PBL0}, the following results hold.
\begin{enumerate}
    \item[(i)] A global minimizer $x^*$ is also a $\tau$-stationary point for any $0 < \tau < \frac{1}{L}$ if $f$ is strongly smooth with $L > 0$. Moreover,
    \[
    x^* = {\rm Prox}_{\tau \lambda p(\cdot )} \left( x^* - \tau \nabla f(x^*) \right).
    \]
    \item[(ii)] A $\tau$-stationary point with $\tau > 0$ is a local minimizer if $f$ is convex.

    \item[(iii)] A $\tau$-stationary point with $\tau > 1/\ell$ is also a (unique) global minimizer if $f$ is strongly convex with $\ell > 0$.
\end{enumerate}
\end{proposition}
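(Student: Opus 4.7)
For part (i), the natural plan is to show directly that $x^*$ is the unique minimizer of the prox subproblem $\tfrac{1}{2}\|y - (x^* - \tau\nabla f(x^*))\|^2 + \tau\lambda p(y)$. Expanding the squared norm and dropping the constant $\tfrac{\tau^2}{2}\|\nabla f(x^*)\|^2$ reduces the objective to $\tfrac{1}{2}\|y - x^*\|^2 + \tau\langle\nabla f(x^*), y - x^*\rangle + \tau\lambda p(y)$. Strong smoothness \eqref{eq-2-4} bounds the cross term from below by $\tau(f(y)-f(x^*)) - \tfrac{\tau L}{2}\|y-x^*\|^2$, so the objective dominates $\tfrac{1-\tau L}{2}\|y - x^*\|^2 + \tau\phi(y) - \tau f(x^*)$. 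Because $x^*$ globally minimizes $\phi$ and $1 - \tau L > 0$ by the choice of $\tau$, this lower bound equals the subproblem value at $x^*$ plus a strictly positive penalty whenever $y \neq x^*$, which gives both the prox identity and uniqueness.

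For part (ii), I would start from convexity of $f$ to write $\phi(x) - \phi(x^*) \ge \langle\nabla f(x^*), x - x^*\rangle + \lambda(\|x\|_0 - \|x^*\|_0)$ for $x \in \Omega$, then estimate the right-hand side coordinatewise via Lemma~\ref{lemma-2}. Splitting across the sets $\Theta_*, \Gamma^u_*, \Gamma^l_*, \overline{\mathcal{I}}_*$ from \eqref{eq-2-6}: on $\Theta_*$ the gradient vanishes; on $\Gamma^u_*$ the sign condition $\nabla_i f(x^*) \le 0$ together with $x_i - u_i \le 0$ makes the term nonnegative, and symmetrically on $\Gamma^l_*$; on $\overline{\mathcal{I}}_*$ we have $x^*_i = 0$ and $|\nabla_i f(x^*)| \le \sqrt{2\lambda/\tau}$. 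For $x$ close enough to $x^*$ the $\ell_0$ jump reduces to $|\{i \in \overline{\mathcal{I}}_*: x_i \ne 0\}|$, and by further restricting to the neighborhood where $|x_i| \le \sqrt{\tau\lambda/2}$ on $\overline{\mathcal{I}}_*$, each newly activated index contributes at least $\lambda - \sqrt{2\lambda/\tau}\,|x_i| \ge 0$, so $\phi(x) \ge \phi(x^*)$.

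Part (iii) pushes the same partition argument globally, replacing convexity with strong convexity so that an extra $\tfrac{\ell}{2}\|y - x^*\|^2$ term is available. The main obstacle, and the reason the assumption $\tau > 1/\ell$ is needed, is absorbing indices where $x^*_i \ne 0$ but $y_i = 0$: these reduce $\|y\|_0$ and leave an unmatched $-\lambda$. Condition \eqref{eq-tau} dominates the deficit, since for $i \in \Theta_*$ one has $(x^*_i)^2 > 2\tau\lambda$ and for $i \in \Gamma_*$ with $x^*_i \ne 0$ one has $(x^*_i)^2 \ge a > 2\tau\lambda$, so that $\tfrac{\ell}{2}(x^*_i)^2 > \ell\tau\lambda > \lambda$. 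For indices of $\overline{\mathcal{I}}_*$ where $y_i \ne 0$, the per-coordinate residual becomes the quadratic $(\ell/2)t^2 - \sqrt{2\lambda/\tau}\,t + \lambda$ in $t = |y_i|$, whose discriminant $2\lambda(1/\tau - \ell)$ is strictly negative, hence the quadratic is strictly positive. Summing these coordinatewise contributions, each of which is strictly positive unless the coordinate already agrees with $x^*$, yields $\phi(y) > \phi(x^*)$ for every $y \neq x^*$, proving global minimality and uniqueness simultaneously.
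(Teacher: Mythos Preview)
Your arguments for parts (i) and (ii) are correct and track the paper's proof closely: both use strong smoothness plus global optimality of $\phi$ to force uniqueness of the prox in (i), and both shrink the neighborhood so that newly activated coordinates in $\overline{\mathcal{I}}_*$ are dominated by the $\lambda$ jump in (ii). Your per-coordinate bound $|x_i|\le\sqrt{\tau\lambda/2}$ is a mild variant of the paper's $\epsilon_*=\sqrt{\tau\lambda/(2n)}$ together with Cauchy--Schwarz, but the substance is identical.

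Part (iii) is where you genuinely diverge. The paper does \emph{not} argue coordinatewise; instead it invokes the defining inequality of the proximal operator,
\[
\tfrac{1}{2}\|x-z^*\|^2+\tau\lambda p(x)\ \ge\ \tfrac{1}{2}\|x^*-z^*\|^2+\tau\lambda p(x^*),\qquad z^*:=x^*-\tau\nabla f(x^*),
\]
which after expanding the squares rearranges to $\langle\nabla f(x^*),x-x^*\rangle+\lambda p(x)\ge -\tfrac{1}{2\tau}\|x-x^*\|^2+\lambda p(x^*)$. Adding this to the strong-convexity lower bound for $f(x)$ gives $\phi(x)\ge\phi(x^*)+\tfrac{\ell-1/\tau}{2}\|x-x^*\|^2$ in one stroke, with strict inequality when $\tau>1/\ell$. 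This is shorter and, notably, does not appeal to the coordinate characterization of Lemma~\ref{lemma-2} or to condition~\eqref{eq-tau}; it uses only the abstract prox definition. Your route, by contrast, unpacks the $\tau$-stationary conditions on $\Theta_*,\Gamma_*,\overline{\mathcal{I}}_*$ and checks a separate quadratic inequality on each block, leaning on \eqref{eq-tau} to control the $\Gamma_*$ indices. It works (one small imprecision: on $\Theta_*$ you only have $(x^*_i)^2\ge 2\tau\lambda$, not strict, but $\ell\tau>1$ still makes the bound strict), and it has the pedagogical virtue of showing exactly where each hypothesis bites; the trade-off is that the paper's prox-inequality argument is considerably cleaner and holds without the standing assumption~\eqref{eq-tau}.
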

\begin{proof} (i) Denote $\mathbb{P} := {\rm Prox}_{\tau \lambda p(\cdot) } (x^* - \tau \nabla f(x^*))$ and $ \mu := L - 1/\tau < 0 $ due to $ 0 < \tau < \frac{1}{L} $. Let $ x^* $ be a global minimizer on $\Omega$ and consider any point $ z \in \mathbb{P} $. Because $ f $ is strongly smooth and $L = \mu + 1/\tau$, it holds that
\begin{align}\label{pro-2-1}
&\quad \  2f(z) + 2\lambda p(z) \nonumber\\
&\le 2f(x^*) + 2\left\langle \nabla f(x^*), z - x^* \right\rangle + L\|z - x^*\|^2 + 2\lambda p(z) \\
&= 2f(x^*) + 2\left\langle \nabla f(x^*), z - x^* \right\rangle + (1/\tau) \|z - x^*\|^2 + \mu \|z - x^*\|^2 + 2\lambda p(z) \nonumber\\
&= 2f(x^*) + (1/\tau) \|z - (x^* - \tau \nabla f(x^*))\|^2 - \tau \|\nabla f(x^*)\|^2 + 2\lambda p(z) + \mu \|z - x^*\|^2 .\nonumber
\end{align}
By $ z \in \mathbb{P} $ , we obtain that
\begin{align}\label{pro-2-2}
&\quad \  (1/\tau) \|z - (x^* - \tau \nabla f(x^*))\|^2  + 2\lambda p(z)  \nonumber \\
&\le (1/\tau) \|x^* - (x^* - \tau \nabla f(x^*))\|^2 + 2\lambda p(x^*) = \tau \| \nabla f(x^*))\|^2 + 2\lambda p(x^*).
\end{align}
By the fact that $ x^* $ is the global minimizer of \eqref{PBL0}, we obtain that
\begin{align}\label{pro-2-3}
2f(x^*) + 2\lambda p(x^*)  \le 2f(z) + 2\lambda p(z).
\end{align}
From \eqref{pro-2-1}, \eqref{pro-2-2} and \eqref{pro-2-3}, it holds that
\begin{align*}
2f(z) + 2\lambda p(z) \le 2f(z) + 2\lambda p(z) + \mu \|z - x^*\|^2.
\end{align*}
This, together with $ \mu < 0 $, leads to $0 \leq \frac{\mu}{2} \|z - x^*\|^2 \leq 0$, yielding $ z = x^* $. Therefore, $ x^* $ is a $ \tau $-stationary point of \eqref{PBL0}. Since $ z $ is arbitrary in $ \mathbb{P} $ and $ z = x^* $, it holds that $ \mathbb{P} $ is a singleton only containing $ x^* $.

(ii) Now, we consider the case that $f$ is convex. Let $x^*$ be a $\tau$-stationary point with $\tau > 0$ and $\mathcal{I}_* := \text{supp}(x^*)$, it is obvious that $x^* \in \Omega$. Consider a neighborhood region of $x^*$ as $N(x^*) = \{ x \in \Omega \mid \| x - x^* \| < \epsilon_*\}$, where
\begin{align}\label{eq-epsilon}
\epsilon_* :=
\begin{cases}
\min\left\{ \min\limits_{i\in \mathcal{I}_*}\ |x_i^*|,\ \sqrt{{\tau \lambda}/{2n}} \right\} ,\ \text{if}\ x^* \neq 0, \\
\sqrt{{\tau \lambda}/{2n}},\ \text{if}\ x^*=0.
\end{cases}
\end{align}
For any point $x \in N(x^*)$, we conclude that $\mathcal{I}_* \subseteq \text{supp}(x)$. In fact, this is true when $x^* = 0$. When $x^* \neq 0$, if there is a $j$ such that $j \in \mathcal{I}^*$ but $j \notin \text{supp}(x)$, then we derive a contradiction as follows
$$
\epsilon_* \le \min\limits_{i \in \mathcal{I}_*} |x^*_i| \le |x^*_j| = |x^*_j - x_j| \le \|x - x^*\| < \epsilon_*.
$$
Therefore, we have $\mathcal{I}_* \subseteq \text{supp}(x)$.
Because $\tau > 0$, $f$ is convex and \eqref{eq-2-6} , it holds that
\begin{align}\label{eq-2-6-3-0}
f(x) - f(x^*) &\ \ \ge \ \  \left\langle \nabla f(x^*), x - x^* \right\rangle  \nonumber\\
&\ \  = \ \ \left\langle \nabla_{\mathcal{I}_*}f(x^*), (x - x^*)_{\mathcal{I}_*} \right\rangle + \left\langle \nabla_{\overline{\mathcal{I}}_*} f(x^*),(x - x^*)_{\overline{\mathcal{I}}_*} \right\rangle .
\end{align}
By \eqref{eq-2-3}, $\nabla_{\Theta_*} f(x^*) = 0$ which implies that
$$\left\langle \nabla_{\mathcal{I}_*}f(x^*), (x - x^*)_{\mathcal{I}_*} \right\rangle = \left\langle \nabla_{\Gamma_*}f(x^*), (x - x^*)_{\Gamma_*} \right\rangle .$$
Together with \eqref{eq-2-6-2} and \eqref{eq-2-6-3-0}, we obtain that
\begin{align}\label{eq-2-6-3}
f(x) - f(x^*) &\ge \left\langle \nabla_{\Gamma_*}f(x^*), (x - x^*)_{\Gamma_*} \right\rangle + \left\langle \nabla_{\overline{\mathcal{I}}_*} f(x^*),(x - x^*)_{\overline{\mathcal{I}}_*} \right\rangle \nonumber\\
&\ge \left\langle \nabla_{\overline{\mathcal{I}}_*} f(x^*),(x - x^*)_{\overline{\mathcal{I}}_*} \right\rangle =: \phi.
\end{align}
If $\mathcal{I}_* = \text{supp}(x)$, then $\phi = 0$ due to $x_{\overline{\mathcal{I}}_*} = 0$ , $\|x^*\|_0 = \|x\|_0$ and $\delta_{\Omega} (x^*) = \delta_{\Omega} (x)$. It is obvious that $p(x^*) = p(x)$. These allow us to derive that
\begin{align*}
f(x) + \lambda p(x) \stackrel{\eqref{eq-2-6-3}}{\ge} f(x^*) + \phi + \lambda p(x) = f(x^*) + \lambda p(x^*).
\end{align*}
If $\mathcal{I}_* \subset \text{supp}(x)$, then $\|x \|_0 - 1 \ge \|x^*\|_0$ so that $p(x) - 1 \ge p(x^*)$. In addition,
\begin{align*}
\phi &\, \;= \, \; \left\langle \nabla_{\overline{\mathcal{I}}_*}f(x^*), x_{\overline{\mathcal{I}}_*} - x^*_{\overline{\mathcal{I}}_*} \right\rangle \ge
- \| \nabla_{\overline{\mathcal{I}}_*}f(x^*)\| \| x_{\overline{\mathcal{I}}_*} - x^*_{\overline{\mathcal{I}}_*} \|.
\end{align*}
Combine with \eqref{eq-epsilon} and $\|\nabla_{\overline{\mathcal{I}}_*}f(x^*)\| \le \sqrt{ |\overline{\mathcal{I}}_*|2\lambda/\tau} $ from \eqref{eq-2-3}, we obtain that
\begin{align*}
\phi &\ge -\sqrt{ |\overline{\mathcal{I}}_*|2\lambda/\tau} \| x_{\overline{\mathcal{I}}_*} - x_{\overline{\mathcal{I}}_*}^* \| \ge - \sqrt{n2\lambda/\tau }\epsilon_* \stackrel{\eqref{eq-epsilon}}{>} -\lambda .
\end{align*}
Together with \eqref{eq-2-6-3}, we obtain that
\begin{align*}
f(x) + \lambda p(x) &\ge f(x^*) + \phi + \lambda p(x) \\
& > f(x^*) + \lambda p(x) - \lambda \\
&\ge f(x^*) + \lambda p(x^*).
\end{align*}
Both cases show the local optimality of $x^*$ in the region $N(x^*)$.

(iii) Now, we consider the case that $f$ is strongly convex. Denote $z^* = x^* - \tau \nabla f(x^*)$. Again, it follows from $x^*$ being a $\tau$-stationary point with $\tau > 0$ for any $x \in \Omega$, by \eqref{eq-2-1} it holds that
\begin{align}\label{eq-straight-1}
1/2\| x - z^*\|^2 + \tau \lambda p(x) \ge 1/2\| x^* - z^*\|^2 + \tau \lambda p(x^*)
\end{align}
It can be derived through straightforward calculations that
\begin{align*}
\| x - z^*\|^2 &= \| (x^* - \tau \nabla f(x^*)) - x  \|^2 \\
&= \|x^* -x \|^2 - 2\tau \left\langle \nabla f(x^*), x^* - x \right\rangle + \|\tau \nabla f(x^*) \|^2 ,\\
\| x^* - z^*\|^2 &= \| x^* - (x^* - \tau \nabla f(x^*))\|^2 =  \|\tau \nabla f(x^*) \|^2 ,
\end{align*}
which suffices to obtain that
\begin{align*}
\frac{1}{2}\| x - z^*\|^2 -\frac{1}{2}\| x^* - z^*\|^2 =\frac{1}{2}\|x^* -x \|^2-\tau\left\langle \nabla f(x^*), x^* - x \right\rangle.
\end{align*}
Together with \eqref{eq-straight-1}, we obtain that
\begin{align*}
\frac{1}{2}\|x^* -x \|^2-\tau\left\langle \nabla f(x^*), x^* - x \right\rangle + \tau\lambda p(x) \ge \tau \lambda p(x^*),
\end{align*}
which implies that
\begin{align}\label{eq-2-6-4}
\left\langle \nabla f(x^*), x - x^* \right\rangle + \lambda p(x) \ge -(1/{2\tau}) \|x - x^*\|^2 + \lambda p(x^*).
\end{align}
Since $f$ is strongly convex which defined in \eqref{eq-2-5} together with \eqref{eq-2-6-4}, for any $x \neq x^*$, we have
\begin{align*}
f(x) + \lambda p(x) &\ge  f(x^*) + \left\langle \nabla f(x^*), x - x^* \right\rangle + (\ell/2)\|x - x^*\|^2 + \lambda p(x) \\
&\ge f(x^*) + \frac{\ell - 1/\tau}{2}\|x - x^*\|^2 + \lambda p(x^*) \\
&\ge  f(x^*) + \lambda p(x^*),
\end{align*}
where the last inequality is based on $\tau \ge 1/\ell$. Clearly, if $\tau > 1/\ell$, then the last inequality holds strictly, implying that $x^*$ is a unique global minimizer.
\end{proof}

\section{Subspace Newton's method}
In this section, we propose a subspace Newton's method for solving problem \eqref{PBL0}, which is an extension of the algorithm in \cite{zhou2021newton}. Our method utilize the relationship between \eqref{eq-2-1} and \eqref{eq-2-7} then employs Newton's method to solve a series of stationary equations \eqref{eq-2-7}. That is, finding the $\tau$-stationary point can transform to finding a solution of nonlinear equation \eqref{eq-2-7}.

At iteration $k$, define $\Theta_k$, $\Gamma_k$, $\mathcal{I}_k$ as follows:
\begin{align}\label{eq-index}
{\Theta}_k = \Theta_{\tau}(x^k),\ {\Gamma}_k = \Gamma_{\tau}(x^k),\ {\Gamma}^u_k = \Gamma^u_{\tau}(x^k),\ {\Gamma}^l_k = \Gamma^l_{\tau}(x^k),\ {\mathcal{I}}_k = \mathcal{I}_{\tau}(x^k).
\end{align}
Denote
\begin{align*}
H^k :=
\begin{bmatrix}
\nabla_{\Theta_k,\Theta_k}^2 f(x^k)        & \nabla_{\Theta_k,\Gamma_k}^{2}f(x^k) & \nabla_{\Theta_k,\overline{\mathcal{I}}_{k}}^{2}f(x^k)\\
0 & I_{\Gamma_k}  & 0 \\
0 & 0  & I_{\overline{\mathcal{I}}_{k}} \\
\end{bmatrix}.
\end{align*}
We apply the Newton's method on \eqref{eq-2-7} to obtaining a direction $d^k$ that is
$$H^{k}d^k = -F_{\tau}(x;\Theta_k ; \Gamma_k ; \overline{\mathcal{I}}_k).$$
Specifically, $d^k$ satisfies
\begin{align}\label{direction}
&\nabla_{\Theta_k,\Theta_k}^{2}f(x^k)d_{\Theta_k}^k +\nabla_{\Theta_k,\Gamma_k}^{2}f(x^k)d_{\Gamma_k}^k +\nabla_{\Theta_k,\overline{\mathcal{I}}_k}^{2}f(x^k)d_{\overline{\mathcal{I}}_k}^k = - \nabla_{\Theta_k}f(x^k), \nonumber\\
&d_{\Gamma_k} = \left[\Pi_{\Omega}( x - \nabla f(x))\right]_{\Gamma_k} - x_{\Gamma_k} ,\\
&d_{\overline{\mathcal{I}}_k}^k = -x_{\overline{\mathcal{I}}_k}^k .\nonumber
\end{align}
For notational convenience, let
\begin{align*}
g^k := \nabla f(x^k),\ J_k := \mathcal{I}_{k-1}\backslash \mathcal{I}_{k}.
\end{align*}
To ensure that Newton's steps is descent direction and also a feasible direction, we need
compute the following criterion
\begin{align}\label{zerodecent}
\begin{cases}
& \left\langle g_{\mathcal{I}_{k}}^{k}, d_{\mathcal{I}_{k}}^{k} \right\rangle \leq -\delta \|d^k\|^2 + \|x^k_{\overline{\mathcal{I}}_{k}} \|^2/(4\tau)  ,\\
&|\mathcal{I}_k| \le \|x^k\|_0 ,\\
&x^k + d^k \in \Omega ,\\
&\Theta_k \setminus \mathcal{I}_{k-1} \neq \emptyset \ \mbox{or}\ \mathcal{I}_k =\mathcal{I}_{k-1} .
\end{cases}
\end{align}
Based on $d^k$ we use the modified Amijio line search to guarantee $f(\tilde{x}^k(\alpha)) < f(x^{k})$, where
\begin{align}\label{2.4}
\tilde{x}^k(\alpha):= \begin{bmatrix}
x^k_{\Theta_k} + \alpha d^k_{\Theta_k} \\
x^k_{\Gamma_k} + d^k_{\Gamma_k} \\
x^k_{\overline{\mathcal{I}}_k} + d^k_{\overline{\mathcal{I}}_k}
\end{bmatrix}
=\begin{bmatrix}
x^k_{\Theta_k} + \alpha d^k_{\Theta_k} \\
x^k_{\Gamma_k} + d^k_{\Gamma_k} \\
0
\end{bmatrix}
=\begin{bmatrix}
x^k_{\Theta_k} + \alpha d^k_{\Theta_k} \\
\left[\Pi_{\Omega}( x^k - g^k)\right]_{\Gamma_k } \\
0
\end{bmatrix}.
\end{align}
Note that if $x^k + d^k \in \Omega$ holds, for any $\alpha \in [0,1]$, there is $-l_{\Theta_k}\le x^k_{\Theta_k} + \alpha d^k_{\Theta_k} \le u_{\Theta_k}$. Hence, $\tilde{x}^k(\alpha) \subseteq \Omega$, for simply denote $\tilde{x}^k := \tilde{x}^k(\alpha) $.

If \eqref{zerodecent} fail, we take the projection gradient method to obtain a direction $d^k$ that is
\begin{align}\label{alg-2-3}
d_{\mathcal{I}_k}^k = \left[\Pi_{\Omega}( x^k - \tau g^k)\right]_{\mathcal{I}_k} - x_{\mathcal{I}_k}^k, d^{k}_{\overline{\mathcal{I}}_{k}} = -x^{k}_{\overline{\mathcal{I}}_{k}}.
\end{align}
For projection gradient step, we define
\begin{align}\label{eq-gradienxk}
\bar{x}^k := x^k + d^k .
\end{align}
It is obvious that, regardless of whether $x^{k+1}=\tilde{x}^k$ or $x^{k+1}=\bar{x}^k$, for the sequence $\{x^k\}$ the following conclusions hold true.
\begin{align}\label{support}
\text{supp} (x^{k+1}) \subset \mathcal{I}_k.
\end{align}
By \eqref{support}, it holds that
$$x^{k+1}_{\overline{\mathcal{I}}_{k}} = 0,\ x^{k}_{\overline{\mathcal{I}}_{k-1}} = 0.$$
It is obvious that
\begin{align}\label{2-1}
-d_{\overline{\mathcal{I}}_k}^k \equiv x_{\overline{\mathcal{I}}_k}^k =
\begin{bmatrix}
x_{\mathcal{I}_{k-1}\cap \overline{\mathcal{I}}_{k}}^k\\
x_{\overline{\mathcal{I}}_{k-1}\cap \overline{\mathcal{I}}_{k}}^k
\end{bmatrix}
=
\begin{bmatrix}
x_{\mathcal{I}_{k-1}\setminus \mathcal{I}_{k}}^k\\
0
\end{bmatrix}
=
\begin{bmatrix}
x_{J_k}^k\\
0
\end{bmatrix}
.
\end{align}
Note that, if $\mathcal{I}_{k} = \mathcal{I}_{k-1}$ there is $J_k= \emptyset$, hence $d_{\overline{\mathcal{I}}_k}^k = 0$. We can see that
\begin{align}\label{eq-bound}
\begin{cases}
x^{k+1}_{\Gamma^u_k} = \left[\Pi_{\Omega}(x^k - \tau \nabla f(x^k)) \right]_{\Gamma^u_k} = u_{\Gamma^u_k} ,\\
x^{k+1}_{\Gamma^l_k} = \left[\Pi_{\Omega}(x^k - \tau \nabla f(x^k)) \right]_{\Gamma^l_k} = -l_{\Gamma^l_k} .
\end{cases}
\end{align}
From above, it not hard to see that $x^k \in \Omega$ for any $k>0$.
The framework of the method is in Algorithm \ref{algorithm-NL0R}.
\begin{algorithm} \label{algorithm-NL0R}
\caption{Subspace Newton's method for the box constraint $\ell_0$-regularized optimization (BNL0R)}
\textbf{S0.} $\delta > 0$, $\lambda \in (0,\underline{\lambda})$, $\tau > 0$ satisfies \eqref{eq-tau}, $\sigma \in (0,1/2)$, $\beta \in (0,1)$, $x^0$, $\mathcal{I}_{-1} = \emptyset$, $k = 0$.
If $k < $ MaxIter or the termination conditions are met then stop, otherwise

\textbf{S1.} Compute $\mathcal{I}_{k}$, ${\Theta}_{k}$ and ${\Gamma}_{k}$ by \eqref{eq-index}. Set $\tilde{S}_{k}:= {\Theta}_{k}\setminus \mathcal{I}_{k-1}$. Go to \textbf{S2}.

\textbf{S2.} Update $d^k$ by the following two cases.
\begin{description}
\item{(\textbf{NM})} If \eqref{direction} is solvable and \eqref{zerodecent} is satisfies, update $d^k$ by \eqref{direction}, go to \textbf{S3}. \\
\item{(\textbf{PGM})} If \eqref{zerodecent} is fail, update $d^k$ by \eqref{alg-2-3}. Set $x^{k+1} = \bar{x}^k $ by \eqref{eq-gradienxk}, go to \textbf{S4}.
\end{description}

\textbf{S3} Find the smallest non-negative integer $m_k$ such that
$$ f(x^{k}(\beta^{m_k})) \le f(x^k) + \sigma \beta^{m_k}\left\langle g^k, d^k \right\rangle . $$
Set $\alpha_k = \beta^{m_k}$, $x^{k+1} = \tilde{x}^k (\alpha_k )$ by \eqref{2.4}, go to \textbf{S4}.

\textbf{S4.} Set $k = k+1$, go to \textbf{S1}.
\end{algorithm}

Note that, with the help of condition \eqref{zerodecent}, $x^{k+1}$ is feasible. And this condition also guarantee that function $\phi(x^k)$ in \eqref{BL0} is decreased. However, \eqref{zerodecent} may holds not always. If \eqref{zerodecent} fail, the projection gradient direction in \eqref{alg-2-3} is used and the feasibility as well as the descent property are still preserved which will be explained later.

The following Proposition demonstrates the descent property of the projection gradient direction.
\begin{proposition}
For any index set $\mathcal{F} \subseteq \mathbb{N}_{n}$, $x \in \Omega$ and $\mathbf{g} \in \mathbb{R}^n$. Define $d_{\mathcal{F}} = \left[\Pi_{\Omega}( x - \tau \mathbf{g})\right]_{\mathcal{F}} - x_{\mathcal{F}}$ , it holds that
\begin{align}\label{gamma-1}
\left\langle \mathbf{g}_{\mathcal{F}}, d_{\mathcal{F}} \right\rangle \le - \frac{1}{\tau} \| d_{\mathcal{F}} \|^2.
\end{align}
\end{proposition}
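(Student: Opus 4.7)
The plan is to derive the claimed descent inequality directly from the variational characterization of the projection in Proposition \ref{pro-projection}, exploiting the fact that $\Omega$ is a Cartesian product of intervals so that $\Pi_\Omega(\cdot)$ acts componentwise. Because the projection is separable, the inner-product inequality in Proposition \ref{pro-projection} restricts to any coordinate subset, and in particular to $\mathcal{F}$, which is the key reduction step.

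Concretely, I would set $z := x - \tau \mathbf{g}$ and apply Proposition \ref{pro-projection} with the test point $y = x \in \Omega$, obtaining
\begin{align*}
\left\langle [\Pi_\Omega(z)]_{\mathcal{F}} - z_{\mathcal{F}},\; x_{\mathcal{F}} - [\Pi_\Omega(z)]_{\mathcal{F}} \right\rangle \ge 0,
\end{align*}
where the restriction to $\mathcal{F}$ is justified by the componentwise separability of $\Pi_\Omega$ on the box $\Omega$. Then I would substitute the identities $[\Pi_\Omega(z)]_{\mathcal{F}} = x_{\mathcal{F}} + d_{\mathcal{F}}$ (by definition of $d_{\mathcal{F}}$) and $z_{\mathcal{F}} = x_{\mathcal{F}} - \tau \mathbf{g}_{\mathcal{F}}$, which converts the left-hand side into $\langle d_{\mathcal{F}} + \tau \mathbf{g}_{\mathcal{F}},\, -d_{\mathcal{F}} \rangle$. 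Expanding yields $-\|d_{\mathcal{F}}\|^2 - \tau \langle \mathbf{g}_{\mathcal{F}}, d_{\mathcal{F}}\rangle \ge 0$, and dividing by $\tau > 0$ produces exactly \eqref{gamma-1}.

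There is no real obstacle here beyond the bookkeeping: the only point worth stating carefully is why the projection inequality may be restricted to the coordinates in $\mathcal{F}$. I would address this in one sentence by noting that $\Omega = \prod_{i=1}^n [-l_i, u_i]$, so $\Pi_\Omega$ decomposes coordinate-wise and Proposition \ref{pro-projection} applied to the product structure yields a nonnegative sum of per-coordinate terms $([\Pi_\Omega(z)]_i - z_i)(x_i - [\Pi_\Omega(z)]_i)$, each of which is individually nonnegative; summing only over $i \in \mathcal{F}$ therefore still gives a nonnegative quantity. With that observation in place the remainder is a two-line algebraic rearrangement.
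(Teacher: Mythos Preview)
Your proposal is correct and follows essentially the same route as the paper: both set $z = x - \tau \mathbf{g}$, invoke the projection inequality of Proposition~\ref{pro-projection} with the feasible point $y = x$, restrict to the coordinates in $\mathcal{F}$, and rearrange to obtain \eqref{gamma-1}. Your treatment is in fact slightly more careful, since you explicitly justify the restriction to $\mathcal{F}$ via the coordinatewise separability of $\Pi_\Omega$ on the box, a point the paper uses silently.
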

\begin{proof}
Denote $z = x - \tau \mathbf{g}$, by Proposition \ref{pro-projection} and $x \in \Omega$ it holds that
\begin{align*}
\left\langle  (x_{\mathcal{F}} - \tau \mathbf{g}_{\mathcal{F}}) - \left[\Pi_{\Omega}(z)\right]_{\mathcal{F}}, x_{\mathcal{F}} -  \left[\Pi_{\Omega}(z)\right]_{\mathcal{F}} \right\rangle \le 0 .
\end{align*}
Hence, we obtain that
\begin{align*}
\left\langle - \tau \mathbf{g}_{\mathcal{F}}, x_{\mathcal{F}} - \left[\Pi_{\Omega}(z)\right]_{\mathcal{F}} \right\rangle \le - \left\| x_{\mathcal{F}} - \left[\Pi_{\Omega}(z)\right]_{\mathcal{F}} \right\|^2 .
\end{align*}
The proof is complete.
\end{proof}

The next Lemma implies the descent property of the Newton step.
\begin{lemma}\label{lem-3-1}
For $d^k$ defined by \eqref{direction} and $x^k \in \Omega$, the following holds
\begin{align}\label{eq-dkHIJ}
\langle d^k, H^k d^k \rangle = \left\langle d_{\mathcal{I}_k \cup J_k}^k , H^k_{\mathcal{I}_k\cup J_k} d_{\mathcal{I}_k \cup J_k}^k \right\rangle .
\end{align}
Moreover, there is
\begin{align}\label{eq-3-1}
\left\langle d^k_{\mathcal{I}_k}, g^k_{\mathcal{I}_k} \right\rangle \le - \left\langle d_{\mathcal{I}_k \cup J_k}^k , H^k_{\mathcal{I}_k \cup J_k} d_{\mathcal{I}_k \cup J_k}^k \right\rangle + (1 - \frac{1}{\tau}) \|d_{\Gamma_k}^k\|^2 + \|d_{J_k}^k\|^2 .
\end{align}
\end{lemma}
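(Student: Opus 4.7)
The plan is to handle the two parts of the statement separately, both hinging on the sparsity pattern of $d^k$ and the block structure of $H^k$.

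For the identity \eqref{eq-dkHIJ}, the key observation is that $d^k$ is supported inside $\mathcal{I}_k\cup J_k$. Indeed, by \eqref{direction} we have $d^k_{\overline{\mathcal{I}}_k}=-x^k_{\overline{\mathcal{I}}_k}$, and by \eqref{2-1} this vector vanishes on $\overline{\mathcal{I}}_{k-1}\cap\overline{\mathcal{I}}_k$, leaving only the $J_k=\mathcal{I}_{k-1}\setminus\mathcal{I}_k$ coordinates non-zero. Since the entries of $d^k$ outside $\mathcal{I}_k\cup J_k$ are zero, the quadratic form $\langle d^k, H^k d^k\rangle$ only picks up the submatrix $H^k_{\mathcal{I}_k\cup J_k}$; writing this out gives \eqref{eq-dkHIJ} immediately.

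For the inequality \eqref{eq-3-1}, I would split the left-hand side as $\langle d^k_{\Theta_k},g^k_{\Theta_k}\rangle+\langle d^k_{\Gamma_k},g^k_{\Gamma_k}\rangle$ and treat each term separately. For the $\Theta_k$ term, the Newton equation in \eqref{direction}, combined with the fact that $d^k_{\overline{\mathcal{I}}_k\setminus J_k}=0$, gives
\begin{equation*}
g^k_{\Theta_k} = -\nabla^2_{\Theta_k,\Theta_k}f(x^k)\,d^k_{\Theta_k}-\nabla^2_{\Theta_k,\Gamma_k}f(x^k)\,d^k_{\Gamma_k}-\nabla^2_{\Theta_k,J_k}f(x^k)\,d^k_{J_k}.
\end{equation*}
Dotting with $d^k_{\Theta_k}$ and comparing with the block-by-block expansion of $\langle d^k_{\mathcal{I}_k\cup J_k},H^k_{\mathcal{I}_k\cup J_k}d^k_{\mathcal{I}_k\cup J_k}\rangle$, which is exactly $\langle d^k_{\Theta_k},\nabla^2_{\Theta_k,\Theta_k}d^k_{\Theta_k}+\nabla^2_{\Theta_k,\Gamma_k}d^k_{\Gamma_k}+\nabla^2_{\Theta_k,J_k}d^k_{J_k}\rangle+\|d^k_{\Gamma_k}\|^2+\|d^k_{J_k}\|^2$, yields the identity $\langle d^k_{\Theta_k},g^k_{\Theta_k}\rangle = -\langle d^k_{\mathcal{I}_k\cup J_k},H^k_{\mathcal{I}_k\cup J_k}d^k_{\mathcal{I}_k\cup J_k}\rangle+\|d^k_{\Gamma_k}\|^2+\|d^k_{J_k}\|^2$. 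For the $\Gamma_k$ term, I would invoke the preceding proposition on the descent property of the projection gradient direction with $\mathcal{F}=\Gamma_k$ and $\mathbf{g}=g^k$, which produces $\langle d^k_{\Gamma_k},g^k_{\Gamma_k}\rangle\le -\tfrac{1}{\tau}\|d^k_{\Gamma_k}\|^2$. Summing the two contributions gives \eqref{eq-3-1} after the $\|d^k_{\Gamma_k}\|^2$ terms are collected into $(1-1/\tau)\|d^k_{\Gamma_k}\|^2$.

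The main hurdle is purely bookkeeping: keeping the three relevant index blocks $\Theta_k$, $\Gamma_k$ and $J_k$ straight, and being careful that the Hessian cross-term $\nabla^2_{\Theta_k,\overline{\mathcal{I}}_k}f(x^k)\,d^k_{\overline{\mathcal{I}}_k}$ collapses to $\nabla^2_{\Theta_k,J_k}f(x^k)\,d^k_{J_k}$ because of \eqref{2-1}. No new analytic input is needed beyond the Newton equation \eqref{direction} and the projection descent proposition already proved.
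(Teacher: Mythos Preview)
Your proposal is correct and follows essentially the same approach as the paper's proof: the paper likewise reduces $d^k$ to its support $\mathcal{I}_k\cup J_k$ via \eqref{2-1}, expands $\langle d^k,H^kd^k\rangle$ block by block to obtain \eqref{eq-dkHIJ}, extracts the identity $\langle d^k_{\Theta_k},g^k_{\Theta_k}\rangle=-\langle d^k_{\mathcal{I}_k\cup J_k},H^k_{\mathcal{I}_k\cup J_k}d^k_{\mathcal{I}_k\cup J_k}\rangle+\|d^k_{\Gamma_k}\|^2+\|d^k_{J_k}\|^2$ from the Newton equation, and then adds the projection-descent bound \eqref{gamma-1} on the $\Gamma_k$ block. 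Your bookkeeping and the order of steps match the paper's argument essentially line for line.
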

\begin{proof}
From \eqref{2-1}, there is
\begin{align*}
d^k =\left[ \left(d_{\mathcal{I}_k}^k \right)^{\top}, \left(d_{\overline{\mathcal{I}}_k}^k\right)^{\top} \right]^{\top} = \left[ \left(d_{\mathcal{I}_k} ^k \right)^{\top}, \left(x_{J_k}^k\right)^{\top},0 \right]^{\top} =\left[ \left(d_{\mathcal{I}_k \cup {J_k}} ^k \right)^{\top},0 \right]^{\top} .
\end{align*}
Together with \eqref{direction}, it holds that
\begin{align}\label{eq-dHd}
&\left\langle d^k , H^k d^k \right\rangle \nonumber\\
=& \begin{bmatrix}
d_{\Theta_k}^k\\
d_{\Gamma_k}^k \\
d_{\overline{\mathcal{I}}_k}^k
\end{bmatrix}^{\top}
\begin{bmatrix}
\nabla_{\Theta_k,\Theta_k}^{2}f(x^k) d_{\Theta_k}^k   +  \nabla_{\Theta_k,\Gamma_{k}}^{2}f(x^k) d_{\Gamma_k}^k  +  \nabla_{\Theta_k,\overline{\mathcal{I}}_k}^{2}f(x^k) d_{\overline{\mathcal{I}}_k}^k\\
d_{\Gamma_k}^k    \\
d_{\overline{\mathcal{I}}_k}^k
\end{bmatrix}\nonumber\\
=& \begin{bmatrix}
d_{\Theta_k}^k\\
d_{\Gamma_k}^k \\
x_{J_k}^k \\
0
\end{bmatrix}^{\top}
\begin{bmatrix}
\nabla_{\Theta_k,\Theta_k}^{2}f(x^k) d_{\Theta_k}^k   +  \nabla_{\Theta_k,\Gamma_{k}}^{2}f(x^k) d_{\Gamma_k}^k  +  \nabla_{\Theta_k,J_k}^{2}f(x^k) d_{J_k}^k\\
d_{\Gamma_k}^k    \\
x_{J_k}^k \\
0
\end{bmatrix}\nonumber\\
= &\left\langle d_{\mathcal{I}_k \cup J_k}^k , H^k_{\mathcal{I}_k\cup J_k} d_{\mathcal{I}_k \cup J_k}^k \right\rangle ,
\end{align}
where
\begin{align}\label{eq-HIJ}
H_{\mathcal{I}_k \cup J_k}^k
=
\begin{bmatrix}
\nabla_{\Theta_k, \Theta_k}^{2}f(x^k)   &  \nabla_{\Theta_k,\Gamma_k}^{2}f(x^k)  &  \nabla_{\Theta_k,J_{k}}^{2}f(x^k) \\
0   &    I_{\Gamma_k}     &0\\
0   &    0     &I_{J_K}
\end{bmatrix}
.
\end{align}
Hence, the proof of \eqref{eq-dkHIJ} is complete. By \eqref{direction}, we obtain that
\begin{align*}
-g_{\Theta_k}^k = \nabla_{\Theta_k,\Theta_k}^{2}f(x^k) d_{\Theta_k}^k   +  \nabla_{\Theta_k,\Gamma_{k}}^{2}f(x^k) d_{\Gamma_k}^k  +  \nabla_{\Theta_k,\overline{\mathcal{I}}_k}^{2}f(x^k) d_{\overline{\mathcal{I}}_k}^k .
\end{align*}
Together with \eqref{eq-dHd}, we obtain that
\begin{align*}
\left\langle d_{\mathcal{I}_k \cup J_k}^k , H^k_{\mathcal{I}_k\cup J_k} d_{\mathcal{I}_k \cup J_k}^k \right\rangle = - \left\langle d_{\Theta_k}^k, g_{\Theta_k}^k \right\rangle + \| d_{\Gamma_k}^k \|^2 + \| d_{J_k}^k \|^2 ,
\end{align*}
thus
\begin{align}\label{theta-1}
\left\langle d_{\Theta_k}^k, g_{\Theta_k}^k \right\rangle = - \left\langle d_{\mathcal{I}_k \cup J_k}^k , H^k_{\mathcal{I}_k\cup J_k} d_{\mathcal{I}_k \cup J_k}^k \right\rangle + \| d_{\Gamma_k}^k \|^2 + \| d_{J_k}^k \|^2 .
\end{align}
Combining \eqref{theta-1} and \eqref{gamma-1}, we obtain that
\begin{align*}
\left\langle d^k_{\mathcal{I}_k}, g^k_{\mathcal{I}_k} \right\rangle &= \left\langle d_{\Theta_k}^k, g_{\Theta_k}^k \right\rangle + \left\langle g_{\Gamma_k}^k, d_{\Gamma_k}^k \right\rangle\\
&\le - \left\langle d_{\mathcal{I}_k \cup J_k}^k , H^k_{\mathcal{I}_k\cup J_k} d_{\mathcal{I}_k \cup J_k}^k \right\rangle +  \|d_{\Gamma_k}^k\|^2 + \|d_{J_k}^k\|^2  - \frac{1}{\tau} \|d_{\Gamma_k}^k\|^2 \\
&= - \left\langle d_{\mathcal{I}_k \cup J_k}^k , H^k_{\mathcal{I}_k\cup J_k} d_{\mathcal{I}_k \cup J_k}^k \right\rangle + (1-\frac{1}{\tau}) \|d_{\Gamma_k}^k\|^2 + \|d_{J_k}^k\|^2  .
\end{align*}
The proof is complete.
\end{proof}
Lemma \ref{lem-3-1} indicates that if $H^k_{\mathcal{I}_k\cup J_k}$ has positive lower and upper bounds, then $d^k$ is a decent direction under some properly chosen $\delta$ and $\tau$. It is obvious that $H^k_{\mathcal{I}_k\cup J_k}$ being bounded from below can be guaranteed by some assumptions, such as the strong convexity of $f$.

\section{Convergence analysis}
Next, we analyze the global convergence of the algorithm. First, we will prove the descent property of the Newton's step and the finite termination of the line search. Finally, we will prove the global convergence of algorithm \ref{algorithm-NL0R} in a Theorem. We need to define following parameters
\begin{align}\label{2-1-1}
&\overline{\alpha} := \min \left\{\frac{1-2\sigma}{L/\delta-\sigma}, \frac{2(1-\sigma)\delta}{L},1 \right\} ,\nonumber \\
&\overline{\tau} := \min \left\{\frac{2\overline{\alpha}\delta \beta}{nL^2}, \frac{\overline{\alpha} \beta}{n},\frac{1}{4L},  \frac{2(1-\sigma)}{L }, \frac{a}{2\lambda}, \frac{a}{2\max\limits_{i\in \mathbb{N}_{n}} |\nabla_i f(0)|} \right\} \nonumber ,\\
&\rho := \frac{2\delta - n\tau L^2}{2}.
\end{align}
The first Lemma demonstrates that the iteration direction $ d_k $ obtained from \eqref{direction} is a descent direction.
\begin{lemma}\label{lem-2-1}
{\rm(Descent property)} Let $f$ be strongly smooth with $L>0$, $\overline{\tau},\rho$ be defined by \eqref{2-1-1}. If $d^k$ is defined by \eqref{direction}. Then, for any $\tau  \in(0,\overline{\tau})$, it holds that $\rho >0$ and
\begin{align}\label{lem-2-1-0}
\left\langle g^{k},d^{k}\right\rangle \le -\rho\|d^{k}\|^{2} .
\end{align}
\end{lemma}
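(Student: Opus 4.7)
The plan is to establish $\rho > 0$ directly from the constraints built into $\overline{\tau}$, then to decompose $\langle g^k, d^k\rangle$ along the partition $\mathbb{N}_n = \mathcal{I}_k \cup \overline{\mathcal{I}}_k$ and control each piece using, respectively, the Newton acceptance test \eqref{zerodecent} and the identity $d^k_{\overline{\mathcal{I}}_k} = -x^k_{\overline{\mathcal{I}}_k}$ from \eqref{2-1}.

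The first task is to verify positivity of $\rho$. Since $\tau \in (0,\overline{\tau})$ and $\overline{\tau} \le 2\overline{\alpha}\delta\beta/(nL^2)$ with $\overline{\alpha}, \beta \in (0,1]$ from \eqref{2-1-1}, it follows that $n\tau L^2 < 2\overline{\alpha}\delta\beta \le 2\delta$, hence $\rho = (2\delta - n\tau L^2)/2 > 0$. This step is mechanical.

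For the descent inequality, I would split
$$\langle g^k, d^k\rangle = \langle g^k_{\mathcal{I}_k}, d^k_{\mathcal{I}_k}\rangle + \langle g^k_{\overline{\mathcal{I}}_k}, d^k_{\overline{\mathcal{I}}_k}\rangle.$$
Since $d^k$ is produced by \eqref{direction}, the branch \textbf{(NM)} in Algorithm~\ref{algorithm-NL0R} was taken, so \eqref{zerodecent} holds and the first term is bounded above by $-\delta\|d^k\|^2 + \|x^k_{\overline{\mathcal{I}}_k}\|^2/(4\tau)$. For the second term, I would invoke $d^k_{\overline{\mathcal{I}}_k} = -x^k_{\overline{\mathcal{I}}_k}$ to rewrite it as $-\langle g^k_{\overline{\mathcal{I}}_k}, x^k_{\overline{\mathcal{I}}_k}\rangle$, and then bound it above by Cauchy--Schwarz followed by a Young's inequality whose weight depends on $\tau$ and $L$.

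The hard part will be estimating $\|g^k_{\overline{\mathcal{I}}_k}\|$ tightly enough to produce exactly the $n\tau L^2/2$ coefficient characterizing $\rho$. My approach is to use the strong smoothness of $f$ componentwise: for each $i \in \overline{\mathcal{I}}_k$, the Lipschitz estimate $|\nabla_i f(x^k) - \nabla_i f(y)| \le L\|x^k - y\|$ applied to a reference point $y$ tailored to the direction $d^k$ (for instance $y = x^k + d^k$, whose $\overline{\mathcal{I}}_k$-components vanish) yields a componentwise bound of order $L\|d^k\|$, and summing over at most $n$ indices gives $\|g^k_{\overline{\mathcal{I}}_k}\|^2 \le nL^2\|d^k\|^2$. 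The Young's weight then has to be tuned so the $\|x^k_{\overline{\mathcal{I}}_k}\|^2$ term cancels against the $\|x^k_{\overline{\mathcal{I}}_k}\|^2/(4\tau)$ slack from \eqref{zerodecent}, leaving a residual of exactly $(n\tau L^2/2)\|d^k\|^2$. Summing the two bounds gives
$$\langle g^k, d^k\rangle \le -\delta\|d^k\|^2 + \tfrac{n\tau L^2}{2}\|d^k\|^2 = -\rho\|d^k\|^2,$$
which is the claim. The remaining pieces of $\overline{\tau}$, such as $\overline{\tau} \le 1/(4L)$ and $\overline{\tau} \le 2(1-\sigma)/L$, are not needed here; I expect them to enter only in the subsequent analysis of the Armijo linesearch.
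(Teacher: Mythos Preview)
Your argument for $\rho>0$ is fine and matches the paper. The descent inequality, however, has a genuine gap in the treatment of the $\overline{\mathcal{I}}_k$ block.

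The step that fails is the bound $\|g^k_{\overline{\mathcal{I}}_k}\|^2 \le nL^2\|d^k\|^2$. Lipschitz continuity of $\nabla f$ only controls the \emph{difference} $|\nabla_i f(x^k) - \nabla_i f(y)|$; it says nothing about $|\nabla_i f(x^k)|$ unless $\nabla_i f(y)=0$. Your proposed reference $y = x^k + d^k$ has vanishing $\overline{\mathcal{I}}_k$-\emph{coordinates}, but there is no reason its $\overline{\mathcal{I}}_k$-\emph{gradient} components vanish: the Newton system \eqref{direction} forces $\nabla_{\Theta_k} f$ to be approximately zero, not $\nabla_{\overline{\mathcal{I}}_k} f$. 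Even if the bound held, the Young's inequality you sketch produces a \emph{positive} $\|x^k_{\overline{\mathcal{I}}_k}\|^2$ term on the upper-bound side, which adds to (rather than cancels) the $\|x^k_{\overline{\mathcal{I}}_k}\|^2/(4\tau)$ slack from \eqref{zerodecent}; the signs go the wrong way for the cancellation you describe.

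The paper's route (delegated to \cite[Lemma~3]{zhou2021newton}) is structurally different and uses two ingredients you do not invoke. First, the estimate $\|g^k_{\Theta_k}\|\le L\|d^k\|$ in \eqref{lem-2-1-1} comes directly from the Newton equation \eqref{direction}, not from smoothness. Second, and crucially, the last clause of \eqref{zerodecent} (either $\mathcal{I}_k=\mathcal{I}_{k-1}$ or $\tilde S_k := \Theta_k\setminus\mathcal{I}_{k-1}\neq\emptyset$) drives a case split. If $\mathcal{I}_k=\mathcal{I}_{k-1}$ then $x^k_{\overline{\mathcal{I}}_k}=0$ by \eqref{2-1} and the $\overline{\mathcal{I}}_k$ block contributes nothing. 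Otherwise pick $i\in\tilde S_k$: then $x^k_i=0$ (since $\mathrm{supp}(x^k)\subseteq\mathcal{I}_{k-1}$) and $i\in\Theta_k$ forces $|g^k_i|\ge\sqrt{2\lambda/\tau}$, so \eqref{lem-2-1-1} yields the \emph{lower} bound $\|d^k\|^2\ge 2\lambda/(\tau L^2)$. Meanwhile, for $j\in J_k\subset\overline{\mathcal{I}}_k$ the defining inequality $|x^k_j-\tau g^k_j|^2<2\tau\lambda$ expands to give $-g^k_j x^k_j < \lambda - (x^k_j)^2/(2\tau) - \tau(g^k_j)^2/2$, and summing absorbs the $\|x^k_{\overline{\mathcal{I}}_k}\|^2/(4\tau)$ slack with the correct sign while leaving at most $|J_k|\lambda\le n\lambda \le (n\tau L^2/2)\|d^k\|^2$. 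This is what produces the precise coefficient in $\rho$. Your proposal misses both the role of \eqref{lem-2-1-1} and the $\tilde S_k$ clause entirely.
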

\begin{proof}
It follows from \eqref{2-1-1} that $\overline{\alpha} \le 1$ and thus $\overline{\alpha} \beta < 1$ due to $\beta \in (0, 1)$. Hence $\overline{\tau} \le 2\delta/(nL^2)$, immediately showing that $\rho > 0$ if $\tau \in (0,\overline{\tau})$ . By \eqref{direction} and \eqref{2-1}, it holds that
\begin{align}\label{lem-2-1-1}
\|g_{\Theta_k}^k\| &= \|\nabla_{\Theta_k,\Theta_k}^{2}f(x^k)d_{\Theta_k}^k  + \nabla_{\Theta_k,\Gamma_k}^{2}f(x^k)d_{\Gamma_k}^k + \nabla_{\Theta_k, {J_k}}f(x^k)  d_{J_k}^k\|   \nonumber\\
&= \| \nabla^2_{\Theta_k, \mathcal{I}_k \cup {J_k}}f(x^k)  d_{ \mathcal{I}_k \cup {J_k}}^k\|  \nonumber\\
&\le L\| d_{\mathcal{I}_k \cup {J_k}}^k \|\nonumber\\
&= L\|d^k\|  .
\end{align}
By replacing set $T_k$ and $S_k$ with $ \mathcal{I}_k$ and $\tilde{S}_k$ in \cite[Lemma 3]{zhou2021newton}, the subsequent proof becomes analogous to that of \cite[Lemma 3]{zhou2021newton}. Thus, this part of the proof is omitted for the sake of brevity.
\end{proof}
Next Lemma shows that $\alpha_k$ exists and has a lower bound away from zero. This implies that our step size will not be too small, ensuring that it is well defined.
\begin{lemma}\label{lem-2-2} Let $f$ be strongly smooth with respect to the constant $L > 0$, and set $\overline{\tau}$, $\overline{\alpha}$ as above. If $d^k$ is defined by \eqref{direction}. Then, for any $k \ge 0$ and $0 < \tau \le \min \left\{\frac{\alpha \delta}{nL^2}, \frac{\alpha}{n}, \frac{1}{4L}, \frac{2(1-\sigma)}{L } \right\}$,
$0 < \alpha \le \overline{\alpha}$,  $0 < \delta \le \min \{1, 2L\}$ , $\sigma \in (0, 1/2)$,
the following inequality holds
\begin{align}\label{lem-2-2-1}
f(x^k(\alpha)) \le f(x^k) + \sigma \alpha \left\langle g^k, d^k \right\rangle.
\end{align}
Moreover, for any $\tau \in (0, \overline{\tau})$, we have
\begin{align}\label{lem-2-2-4}
\inf\limits_{k \ge 0} \{\alpha_k\} \ge \beta \overline{\alpha} > 0.
\end{align}
\end{lemma}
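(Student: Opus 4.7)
The plan is to establish the Armijo-type descent \eqref{lem-2-2-1} from the strong smoothness of $f$ combined with the descent facts already in hand, and then to deduce the uniform lower bound \eqref{lem-2-2-4} by a short backtracking argument. The hypothesis that $d^k$ comes from \eqref{direction} puts us in the \textbf{NM} branch, so the screening criterion \eqref{zerodecent} is in force, and Lemma \ref{lem-2-1} supplies $\langle g^k, d^k\rangle \le -\rho\|d^k\|^2$ with $\rho > 0$.

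For the Armijo inequality, I would apply the quadratic upper bound from \eqref{eq-2-4} at $\tilde x^k(\alpha)$,
\begin{align*}
f(\tilde x^k(\alpha)) \le f(x^k) + \langle g^k, \tilde x^k(\alpha) - x^k\rangle + \tfrac{L}{2}\|\tilde x^k(\alpha) - x^k\|^2,
\end{align*}
and expand block-wise using \eqref{2.4}: the displacement $\tilde x^k(\alpha) - x^k$ equals $\alpha d^k_{\Theta_k}$ on $\Theta_k$, $d^k_{\Gamma_k}$ on $\Gamma_k$, and $-x^k_{\overline{\mathcal{I}}_k} = d^k_{\overline{\mathcal{I}}_k}$ on $\overline{\mathcal{I}}_k$. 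Subtracting the target $\sigma\alpha\langle g^k, d^k\rangle$, \eqref{lem-2-2-1} reduces to the non-positivity of
\begin{align*}
&(1-\sigma)\alpha\langle g^k_{\Theta_k}, d^k_{\Theta_k}\rangle + (1-\sigma\alpha)\langle g^k_{\Gamma_k}, d^k_{\Gamma_k}\rangle + (1-\sigma\alpha)\langle g^k_{\overline{\mathcal{I}}_k}, d^k_{\overline{\mathcal{I}}_k}\rangle \\
&\qquad + \tfrac{L}{2}\bigl[\alpha^2\|d^k_{\Theta_k}\|^2 + \|d^k_{\Gamma_k}\|^2 + \|d^k_{\overline{\mathcal{I}}_k}\|^2\bigr].
\end{align*}
On the $\Gamma_k$-block I would use \eqref{gamma-1} (with $\mathbf{g}=g^k$) to obtain $\langle g^k_{\Gamma_k}, d^k_{\Gamma_k}\rangle \le -\tau^{-1}\|d^k_{\Gamma_k}\|^2$, making the $\|d^k_{\Gamma_k}\|^2$ coefficient $L/2 - (1-\sigma\alpha)/\tau \le 0$ under $\tau \le 2(1-\sigma)/L$. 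On the $\overline{\mathcal{I}}_k$-block I would use $d^k_{\overline{\mathcal{I}}_k} = -x^k_{\overline{\mathcal{I}}_k}$ to rewrite that inner product in terms of $\langle g^k, d^k\rangle$ and $\langle g^k_{\mathcal{I}_k}, d^k_{\mathcal{I}_k}\rangle$, invoke the slack $\|x^k_{\overline{\mathcal{I}}_k}\|^2/(4\tau)$ supplied by \eqref{zerodecent}, and absorb the quadratic $(L/2)\|x^k_{\overline{\mathcal{I}}_k}\|^2$ against that slack using $\tau \le 1/(4L)$. On the $\Theta_k$-block I would use the estimate $\|g^k_{\Theta_k}\| \le L\|d^k\|$ from \eqref{lem-2-1-1} together with the global descent \eqref{lem-2-1-0} to dominate $(1-\sigma)\alpha\langle g^k_{\Theta_k}, d^k_{\Theta_k}\rangle + (L/2)\alpha^2\|d^k_{\Theta_k}\|^2$ by a quantity controlled by $-\sigma\alpha\rho\|d^k\|^2$ as soon as $\alpha \le \overline{\alpha}$. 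The thresholds in \eqref{2-1-1} are tuned precisely to make the aggregate coefficient non-positive.

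Once \eqref{lem-2-2-1} is established, the bound \eqref{lem-2-2-4} follows by a one-step argument on the backtracking rule in \textbf{S3}: since every $\alpha \in (0, \overline{\alpha}]$ passes the Armijo test by part one, the smallest accepting exponent $m_k$ satisfies either $m_k = 0$, giving $\alpha_k = 1 \ge \beta\overline{\alpha}$ (using $\overline{\alpha} \le 1$ and $\beta \in (0,1)$), or $m_k \ge 1$ with $\beta^{m_k - 1} > \overline{\alpha}$ by minimality, whence $\alpha_k = \beta \cdot \beta^{m_k - 1} > \beta\overline{\alpha}$; taking the infimum over $k$ yields the claim. The hard part is entirely the bookkeeping for part one: because the mixed step $\tilde x^k(\alpha)$ applies the full displacement on $\Gamma_k$ and $\overline{\mathcal{I}}_k$ but only a fraction $\alpha$ on $\Theta_k$, the textbook Armijo-from-smoothness identity does not apply in one line, and the positive quadratic remainders must be absorbed by the negative slack coming from \eqref{gamma-1} and \eqref{zerodecent} under the tight threshold schedule \eqref{2-1-1}. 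The authors' reference to the parallel \cite[Lemma 3]{zhou2021newton} confirms that this is exactly where the algebra concentrates.
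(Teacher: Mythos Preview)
Your plan is correct and matches the paper's own argument: both start from the strong-smoothness quadratic upper bound, decompose block-wise via \eqref{2.4}, kill the $\Gamma_k$ contribution through \eqref{gamma-1} and the bound $\tau \le 2(1-\sigma)/L$, reduce the $\overline{\mathcal{I}}_k$ contribution to the $J_k$ indices via \eqref{2-1}, and then appeal to the (zerodecent) slack together with the threshold schedule \eqref{2-1-1} to close the estimate---the paper packages this last step as the quantity $\psi$ and defers verbatim to \cite[Lemma 4]{zhou2021newton}, while your backtracking argument for \eqref{lem-2-2-4} is exactly the intended one. The only cosmetic difference is that the paper merges the $\Theta_k$ and $\Gamma_k$ inner products into $\langle g^k_{\mathcal{I}_k}, d^k_{\mathcal{I}_k}\rangle$ before invoking \eqref{zerodecent}, rather than treating $\Theta_k$ separately via \eqref{lem-2-1-1}; also note the paper cites Lemma~4 (not Lemma~3) of \cite{zhou2021newton} for the residual bookkeeping.
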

\begin{proof}
If $0 < \alpha \le \overline{\alpha}$ and $0 < \delta \le \min \{1, 2L\}$. By \eqref{2-1-1}, it holds that
\begin{align}\label{lem-2-2-2}
\alpha \le \frac{2(1-\sigma)\delta}{L}, \quad \alpha \le \frac{1-2\sigma}{L/\delta - \sigma} \le \frac{1-2\sigma}{\max\{0, L - \sigma\}}.
\end{align}
By the strong smoothness of $f$ in \eqref{eq-2-4}, it holds that
\begin{align*}
&\quad \ 2f(x^k(\alpha)) - 2f(x^k) - 2\alpha \sigma \left\langle g^k, d^k \right\rangle \\
&\le 2\left\langle g^k, x^k(\alpha) - x^k \right\rangle + L\|x^k(\alpha) - x^k\|^2 - 2\alpha \sigma \left\langle g^k, d^k \right\rangle .
\end{align*}
By \eqref{2.4}, one has
\begin{align*}
2\left\langle g^k, x^k(\alpha) - x^k \right\rangle &= 2\alpha \left\langle g^k_{\Theta_k}, d^k_{\Theta_k} \right\rangle + 2\left\langle g^k_{\Gamma_k}, d^k_{\Gamma_k} \right\rangle -  2\left\langle g^k_{\overline{\mathcal{I}}_k}, x^k_{\overline{\mathcal{I}}_k} \right\rangle ,\\
L\|x^k(\alpha) - x^k\|^2 &= L\alpha^2\|d^k_{\Theta_k}\|^2 + L\|d^k_{\Gamma_k}\|^2 + L \|x^k_{\overline{\mathcal{I}}_k}\|^2.
\end{align*}
Then, we get
\begin{align*}
&\quad \  2\left\langle g^k, x^k(\alpha) - x^k \right\rangle + L\|x^k(\alpha) - x^k\|^2 - 2\alpha \sigma \left\langle g^k, d^k \right\rangle \\
&= 2\alpha \left\langle g^k_{\Theta_k}, d^k_{\Theta_k} \right\rangle + 2\left\langle g^k_{\Gamma_k}, d^k_{\Gamma_k} \right\rangle  - 2\left\langle g^k_{\overline{\mathcal{I}}_k}, x^k_{\overline{\mathcal{I}}_k} \right\rangle - 2\alpha \sigma \left\langle g^k, d^k \right\rangle \\
&\quad \,+ L\left[ \alpha^2\|d^k_{\Theta_k}\|^2 + \|d^k_{\Gamma_k}\|^2 + \|x^k_{\overline{\mathcal{I}}_k}\|^2 \right] .
\end{align*}
Through simple calculations, by \eqref{direction} and \eqref{gamma-1}, it can be obtained that
\begin{align*}
&\quad \ 2\alpha \left\langle g^k_{\Theta_k}, d^k_{\Theta_k} \right\rangle + 2\left\langle g^k_{\Gamma_k}, d^k_{\Gamma_k} \right\rangle  - 2\left\langle g^k_{\overline{\mathcal{I}}_k}, x^k_{\overline{\mathcal{I}}_k} \right\rangle - 2\alpha \sigma \left\langle g^k, d^k \right\rangle \\
&= 2\alpha (1-\sigma)\left\langle g^k_{\Theta_k}, d^k_{\Theta_k} \right\rangle + 2(1-\alpha \sigma)\left\langle g^k_{\Gamma_k}, d^k_{\Gamma_k} \right\rangle  - 2(1-\alpha \sigma)\left\langle g^k_{\overline{\mathcal{I}}_k}, x^k_{\overline{\mathcal{I}}_k} \right\rangle \\
&\le 2\alpha (1-\sigma)\left\langle g^k_{\Theta_k}, d^k_{\Theta_k} \right\rangle + \frac{2(1-\alpha \sigma)}{\tau}\| d^k_{\Gamma_k}\|^2  - 2(1-\alpha \sigma)\left\langle g^k_{\overline{\mathcal{I}}_k}, x^k_{\overline{\mathcal{I}}_k} \right\rangle .
\end{align*}
From above together with \eqref{2-1} and $\frac{2(1-\alpha \sigma)}{L } \ge \frac{2(1- \sigma)}{L } \ge \tau$, we obtain that
\begin{align*}
&\quad \ 2f(x^k(\alpha)) - 2f(x^k) - 2\alpha \sigma \left\langle g^k, d^k \right\rangle \\
&\le 2\alpha (1-\sigma)\left\langle g^k_{\Theta_k}, d^k_{\Theta_k} \right\rangle + 2(1-\alpha \sigma)\left\langle g^k_{\Gamma_k}, d^k_{\Gamma_k} \right\rangle  - 2(1-\alpha \sigma)\left\langle g^k_{\overline{\mathcal{I}}_k}, x^k_{\overline{\mathcal{I}}_k} \right\rangle \\
&\quad + L\left[ \alpha^2\|d^k_{\Theta_k}\|^2 + \|d^k_{\Gamma_k}\|^2 + \|x^k_{\overline{\mathcal{I}}_k}\|^2 \right] \\
&\le 2\alpha (1-\sigma)\left\langle g^k_{\mathcal{I}_k}, d^k_{\mathcal{I}_k} \right\rangle - 2(1-\alpha \sigma)\left\langle g^k_{\overline{\mathcal{I}}_k}, x^k_{\overline{\mathcal{I}}_k} \right\rangle \\
&\quad + L\left[ \alpha^2\|d^k_{\Theta_k}\|^2 + \left(1 - \frac{2(1-\alpha \sigma)}{L \tau} \right) \|d^k_{\Gamma_k}\|^2 + \|x^k_{\overline{\mathcal{I}}_k}\|^2  \right]\\
&\le 2\alpha (1-\sigma)\left\langle g^k_{\mathcal{I}_k}, d^k_{\mathcal{I}_k} \right\rangle - 2(1-\alpha \sigma)\left\langle g^k_{\overline{\mathcal{I}}_k}, x^k_{\overline{\mathcal{I}}_k} \right\rangle  + L\left[ \alpha^2\|d^k_{\Theta_k}\|^2 + \|x^k_{\overline{\mathcal{I}}_k}\|^2  \right] \\
&= 2\alpha (1-\sigma)\left\langle g^k_{\mathcal{I}_k}, d^k_{\mathcal{I}_k} \right\rangle - 2(1-\alpha \sigma)\left\langle g^k_{J_k}, x^k_{J_k} \right\rangle  + L\left[ \alpha^2\|d^k_{\Theta_k}\|^2 + \|x^k_{\overline{\mathcal{I}}_k}\|^2  \right]
=: \psi.
\end{align*}
Hence, the function $\psi$ is the same as it in \cite[Lemma 4]{zhou2021newton}. By employing a proof analogous to that in \cite[Lemma 4]{zhou2021newton}, we can derive the conclusion of this Theorem. Therefore, the proof is omitted here for brevity.
\end{proof}
The following lemma demonstrates that the sequence $\{x^k\}$ does not diverge.
\begin{lemma}\label{lem-2-3}
Let $f$ be strongly smooth with the constant $L > 0$, and let $\overline{\tau}$ be defined as above. The sequence $\{x^k\}$ is generated by the Algorithm \ref{algorithm-NL0R} with $\tau \in (0, \overline{\tau})$ and $\delta \in (0, \min \{1, 2L\})$, $\sigma \in (0, 1/2)$. Then, the sequence $\{\phi(x^k)\}$ is strictly decreasing and
\begin{align}\label{lem-2-3-1}
\lim\limits_{k\to\infty} \max \{\|F_{\tau}(x^k;\Theta_k; \Gamma_k;\overline{\mathcal{I}}_k)\|,\  \|x^{k+1}-x^{k}\|,\  \|g_{\Theta_k}^k\|,\ \|\tilde{x}^k - \bar{x}^k\|,\  \|{d}^k\| \} =0.
\end{align}
\end{lemma}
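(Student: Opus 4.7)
The plan is: first establish that $\phi(x^{k+1}) - \phi(x^k) \le -c\|d^k\|^2$ for some uniform $c>0$ in both algorithmic branches; then telescope via boundedness from below to deduce $\|d^k\|\to 0$; finally read off every other limit directly from $\|d^k\|\to 0$.

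For the descent step I would treat the two branches separately. In the Newton branch (NM), Lemma~\ref{lem-2-1} gives $\langle g^k, d^k\rangle \le -\rho\|d^k\|^2$ with $\rho>0$ whenever $\tau\in(0,\overline{\tau})$, and Lemma~\ref{lem-2-2} guarantees $\alpha_k \ge \beta\overline{\alpha}$. The Armijo rule then yields $f(x^{k+1}) - f(x^k) \le -\sigma\beta\overline{\alpha}\rho\|d^k\|^2$. For the $\ell_0$ part, the second and third lines of \eqref{zerodecent} together with \eqref{support} give $x^{k+1}\in\Omega$ and $\|x^{k+1}\|_0\le |\mathcal I_k|\le\|x^k\|_0$, so $p(x^{k+1})\le p(x^k)$, producing the full NM descent. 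In the PGM branch, \eqref{alg-2-3} together with Lemma~\ref{lem-prox} identifies $x^{k+1}=\bar x^k$ as a minimizer in ${\rm Prox}_{\tau\lambda p(\cdot)}(x^k-\tau g^k)$; its defining inequality, after expanding the squared norm and cancelling, gives $\langle g^k,x^{k+1}-x^k\rangle + \lambda p(x^{k+1}) - \lambda p(x^k) \le -\tfrac{1}{2\tau}\|d^k\|^2$, which combined with strong smoothness produces
\begin{align*}
\phi(x^{k+1}) - \phi(x^k) \le -\tfrac{1-L\tau}{2\tau}\|d^k\|^2,
\end{align*}
strictly negative because $\overline{\tau}\le 1/(4L)$. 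Choosing $c := \min\{\sigma\beta\overline{\alpha}\rho,\,(1-L\overline{\tau})/(2\overline{\tau})\}>0$ supplies a common lower bound on the per-iteration decrement.

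Since $f$ is bounded from below by assumption and $p\ge 0$, the monotone sequence $\{\phi(x^k)\}$ converges, the decrements telescope, and $\sum_{k}\|d^k\|^2<\infty$, forcing $\|d^k\|\to 0$. The other limits now drop out. Clearly $\|x^{k+1}-x^k\|\le\|d^k\|$ because $\alpha_k\in(0,1]$ and the step formulas \eqref{2.4}, \eqref{eq-gradienxk} scale only the $\Theta_k$ block. On $\Theta_k$, the NM branch gives $\|g^k_{\Theta_k}\|\le L\|d^k\|$ by \eqref{lem-2-1-1}; in the PGM branch, for $i\in\Theta_k$ Lemma~\ref{lem-prox} yields $[\Pi_{\Omega}(x^k-\tau g^k)]_i = x^k_i - \tau g^k_i$, so $d^k_{\Theta_k} = -\tau g^k_{\Theta_k}$ and $\|g^k_{\Theta_k}\| = \|d^k_{\Theta_k}\|/\tau$. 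A direct inspection of \eqref{eq-2-7} against the direction formulas \eqref{direction}, \eqref{alg-2-3} and the bound identity \eqref{eq-bound} gives
\begin{align*}
\|F_{\tau}(x^k;\Theta_k;\Gamma_k;\overline{\mathcal I}_k)\|^2 = \|g^k_{\Theta_k}\|^2 + \|d^k_{\Gamma_k}\|^2 + \|d^k_{\overline{\mathcal I}_k}\|^2 \to 0.
\end{align*}
Finally, by \eqref{2.4} and \eqref{eq-gradienxk}, $\tilde x^k$ and $\bar x^k$ coincide on $\Gamma_k$ and $\overline{\mathcal I}_k$ and differ by $(1-\alpha_k)d^k_{\Theta_k}$ on $\Theta_k$, so $\|\tilde x^k - \bar x^k\|\le\|d^k\|\to 0$.

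The main obstacle is producing a \emph{uniform} positive coefficient in the descent of $\phi$ across the two algorithmically very different branches. This is exactly what the tight choice of $\overline{\tau}$ in \eqref{2-1-1} accomplishes: it simultaneously keeps $\rho>0$ in Lemma~\ref{lem-2-1}, keeps $\alpha_k$ bounded away from $0$ in Lemma~\ref{lem-2-2}, and enforces $L\tau<1$ so that the proximal-gradient descent in PGM is strict. A minor bookkeeping point is verifying $p(x^{k+1})\le p(x^k)$ in the NM branch, which hinges on the $|\mathcal I_k|\le\|x^k\|_0$ criterion embedded in \eqref{zerodecent}.
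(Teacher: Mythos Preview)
Your proposal is correct and follows essentially the same route as the paper: establish a uniform descent $\phi(x^{k+1})-\phi(x^k)\le -c\|d^k\|^2$ by treating the NM and PGM branches separately (the paper gets $c_0=\sigma\beta\overline\alpha\rho$ for NM and $\tfrac{1/\tau-L}{2}\ge\tfrac{3L}{2}$ for PGM, exactly your constants), telescope using boundedness below, and then read off the remaining limits from $\|d^k\|\to 0$. The only cosmetic difference is the final step: for $\|\tilde x^k-\bar x^k\|\to 0$ the paper uses the triangle inequality through $x^k$, whereas you compare the two points block by block; both arguments work.
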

\begin{proof}
It is obvious that $ \min\limits_{y\in \mathbb{R}^n} \frac{1}{2}\|y - (x^k-\tau g^k)\|^2 + \tau \lambda \|y\|_0 + \delta_{\Omega}(y) $ is equivalent to the following problem
\begin{align*}
\min\limits_{y\in \Omega} f(x^k) + \langle g^k,y-x^k \rangle + \frac{1}{2\tau}\|y-x^k\|^2 + \lambda \|y\|_0 .
\end{align*}
If $d^k$ is update from \eqref{alg-2-3}, $x^{k+1} = \bar{x}^k= x^k + d^k$, then it holds that
\begin{align}\label{eq-2-3-min}
x^{k+1} \in \arg \min\limits_{y\in \Omega} f(x^k) + \langle g^k,y-x^k \rangle + \frac{1}{2\tau}\|y-x^k\|^2 + \lambda \|y\|_0 .
\end{align}
By $\tau \in (0,\overline{\tau})$ and \eqref{2-1-1}, there is $1/\tau \ge 4L$. Together with \eqref{eq-2-4} it holds that
\begin{align}
\phi(x^{k+1}) &= f(x^{k+1}) + \lambda \|x^{k+1}\|_0 \nonumber\\
&\le \stackrel{a}{\overbrace{f(x^k) + \langle g^k,x^{k+1}-x^k \rangle + \frac{L}{2}\|x^{k+1}-x^k\|^2 + \lambda \|x^{k+1}\|_0 }} \label{eq-ihtdcrease1}\\
&\le {\underbrace{f(x^k) + \langle g^k,x^{k+1}-x^k \rangle + \frac{1}{2\tau}\|x^{k+1}-x^k\|^2 + \lambda \|x^{k+1}\|_0 }_b} \label{eq-ihtdcrease2}\\
&\le f(x^{k}) + \lambda \|x^{k}\|_0 = \phi(x^k), \nonumber
\end{align}
where the last inequality is by letting $y = x^k$ in \eqref{eq-2-3-min}. Moreover, by $b-a$ from \eqref{eq-ihtdcrease2} and \eqref{eq-ihtdcrease1}, it holds that
\begin{align}\label{eq-2-3-decent}
\phi(x^k) - \phi(x^{k+1}) \ge \frac{1/\tau - L}{2}\|x^{k+1}-x^k\|^2 \ge \frac{3L}{2}\|x^{k+1}-x^k\|^2.
\end{align}
If $d^k$ is update from \eqref{direction} then $x^{k+1} = \tilde{x}^k= \tilde{x}^k(\alpha_k)$ from \eqref{2.4}. Combine with \eqref{lem-2-1-0} and Lemma \ref{lem-2-2}, denoting $c_0:=\sigma \beta \overline{\alpha} \rho$, we obtain that
\begin{align}\label{newtonfun}
f(x^{k+1}) - f(x^k)\le \sigma \alpha_k \left\langle g^k, d^k \right\rangle \le - \sigma \alpha_k \rho \|d^k\|^2 \le -c_0\|d^k\|^2.
\end{align}
By \eqref{zerodecent} and \eqref{support}, there is $\|x^{k+1}\|_0 \le |\mathcal{I}_k| \le \|x^{k}\|_0 $. Together with \eqref{newtonfun}, we obtain that
\begin{align*}
f(x^{k+1}) - f(x^k) + \lambda(\|x^{k+1}\|_0  - \|x^{k}\|_0) \le -c_0\|d^k\|^2.
\end{align*}
Hence, there is
\begin{align}\label{newtonphi}
\phi(x^{k+1} )-\phi(x^{k} ) \leq - c_0 \|d^{k} \|^{2} .
\end{align}
Denote $\eta = \min (c_0, 3L/2)$, by \eqref{eq-2-3-decent} and \eqref{newtonphi}, we obtain that
\begin{align*}
\phi(x^{k+1} )-\phi(x^{k} ) \leq - \eta \|d^{k} \|^{2} .
\end{align*}
This implies that $\{\phi(x^k)\}$ is non-increasing and moreover it holds that
\begin{align*}
\sum_{k=0}^{\infty} \eta \|d^{k} \|^{2}  \leq \sum_{k=0}^{\infty}\left[\phi(x^{k} )-\phi(x^{k+1} )\right]  =\left[\phi(x^{0} )-\lim _{k \to \infty} \phi(x^{k} )\right] < +\infty ,
\end{align*}
where the last inequality is due to $f$ being bounded from below. Hence $ \|d^{k} \| \to 0$,  which suffices to obtain that $ \left\|x^{k+1}-x^{k} \right\| \rightarrow 0$ because of
$$ \left\|x^{k+1}-x^{k} \right\|^2 = \alpha_k \left\|d^k_{\Theta_k}\right\|^2 + \left\|d^k_{\Gamma_k}\right\|^2 + \left\|d^k_{\overline{\mathcal{I}}_k}\right\|^2\le \left\|d^k\right\|^2 .$$
The above relation also indicates that $\left\|d_{\Gamma_{k}}^{k} \right\|^{2} \rightarrow 0$ and $\left\|x_{\overline{\mathcal{I}}_{k}}^{k} \right\|^{2} \rightarrow 0$. In addition, if $d^{k}$ is taken from \eqref{direction}, then $ \|g_{\Theta_{k}}^{k} \| \leq L \|d^{k} \| \rightarrow 0$ by \eqref{lem-2-1-1}. If it is taken from \eqref{alg-2-3} then $ \left\|\tau g_{\Theta_{k}}^{k} \right\|= \left\|\left[\Pi_{\Omega}(x^k-\tau g^k)\right]_{\Theta_k} - x^k_{\Theta_k}\right\| = \left\|d_{\Theta_{k}}^{k} \right\| \to 0$. These results together with \eqref{eq-2-7} imply that $\left\|F_{\tau}\left(x^{k} ; \Theta_{k}; \Gamma_k; \overline{\mathcal{I}}_k \right)\right\|^{2}= \left\|g_{\Theta_{k}}^{k} \right\|^{2} + \left\|d_{\Gamma_{k}}^{k} \right\|^{2}+ \left\|x_{\overline{\mathcal{I}}_{k}}^{k} \right\|^{2} \rightarrow 0$.

Moreover, if the iteration number of $d^{k}$ from \eqref{direction} is infinite, then $\|\tilde{x}^k - x^k\| \to 0$ by Algorithm \ref{algorithm-NL0R}. Similarly, if the iteration number of $d^{k}$ from \eqref{alg-2-3} is infinite, then $\|\bar{x}^k - x^k\| \to 0$. Hence, we obtain that
\begin{align*}
\|\tilde{x}^k - \bar{x}^k\| \le \|\tilde{x}^k - x^k\| + \|\bar{x}^k - x^k\| \to 0 .
\end{align*}
The proof is complete.
\end{proof}
\begin{remark}
In order to make the most use of the Newton step, we can relax the condition in \eqref{zerodecent} to that of the following
\begin{align}\label{zerodecent2}
\begin{cases}
&2 \left\langle g_{\mathcal{I}_{k}}^{k}, d_{\mathcal{I}_{k}}^{k} \right\rangle \leq -\delta \|d^k\|^2 + \|x^k_{\overline{\mathcal{I}}_{k}}\|^2/(4\tau) , \\
&\sigma \beta \overline{\alpha} \left\langle g^k, d^k \right\rangle + \lambda (|\mathcal{I}_k| - \|x^k\|_0 ) \le \frac{1}{2}\sigma \beta \overline{\alpha} \left\langle g^k, d^k \right\rangle ,\\
&x^k + d^k \in \Omega ,\\
&\Theta_k \setminus \mathcal{I}_{k-1} \neq \emptyset \ \mbox{or}\ \mathcal{I}_k =\mathcal{I}_{k-1}.
\end{cases}
\end{align}
Clearly, when \eqref{zerodecent} holds and $\left\langle g^k, d^k \right\rangle \le 0$ then condition \eqref{zerodecent2} holds. Moreover, by Lemma \ref{lem-2-2-1}, we obtain that as long as condition \eqref{zerodecent2} is met, \eqref{newtonphi} remains valid in this case.
Therefore this relaxed condition is well-defined.
\end{remark}
The following lemma shows that for sequence $\{x^k\}$, the magnitude of nonzero component $x^k_i$ cannot be too small and $\mathcal{I}_k$ changes only finitely often for sufficiently large $k$.
\begin{lemma}\label{lem-constantindex}
Let $\{x^k\}$ be generated by the Algorithm \ref{algorithm-NL0R} with $\tau \in (0, \overline{\tau})$ and $\delta \in (0, \min \{1, 2L\})$, $\sigma \in (0, 1/2)$. Then, the following holds.
\begin{description}
\item{(i)} For all $k\ge 0$,
\begin{align}\label{eq-barxk}
|\bar{x}^{k+1}_{i}| \ge \sqrt{2\tau\lambda},\ \mbox{if}\ i\in \mathcal{I}_k.
\end{align}
Moreover, for sufficiently large $k$, there exist $\sqrt{2\tau\lambda} \ge \eta > 0$ such that
\begin{align}\label{eq-tildexk}
|\tilde{x}^{k+1}_{i}| \ge \eta,\ \mbox{if}\ i\in \mathcal{I}_k.
\end{align}

\item{(ii)} For sufficiently large $k$, there is
    \begin{align}\label{eq-notsmall}
    \|x^{k+1} - x^k\| \ge \eta, \ \mbox{if}\ \mathcal{I}_k \neq \mathcal{I}_{k-1}.
    \end{align}

\item{(iii)} For sufficiently large $k$, $\mathcal{I}_k \equiv \mathcal{I}_{k-1} \equiv \mathcal{I}_{\infty}$ and ${\rm{supp}} (x^k) = \mathcal{I}_{\infty}$.
\end{description}
\end{lemma}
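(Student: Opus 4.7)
The plan is to establish (i) by a case split on the index sets, then derive (ii) and (iii) combinatorially from (i) together with the summability results of Lemma \ref{lem-2-3}.

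For part (i), I would first interpret $\bar{x}^{k+1}$ as the projected-gradient candidate produced from $x^k$, so that by \eqref{alg-2-3} its $i$-th entry for $i\in \mathcal{I}_k$ equals $[\Pi_{\Omega}(x^k-\tau g^k)]_i$. I then split $\mathcal{I}_k = \Theta_k \cup \Gamma_k^u \cup \Gamma_k^l$. For $i\in\Theta_k$, the definition \eqref{eq-2-6} already gives $-l_i<x_i^k-\tau g_i^k<u_i$ and $|x_i^k-\tau g_i^k|\ge \sqrt{2\tau\lambda}$, so the projection acts as the identity and the bound is immediate. For $i\in \Gamma_k^u$ the projection returns $u_i$, and the bound $u_i \ge \sqrt{a} > \sqrt{2\tau\lambda}$ follows from \eqref{eq-tau}; the $\Gamma_k^l$ case is symmetric. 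For the Newton candidate $\tilde{x}^{k+1}$, only entries $i\in \Theta_k$ differ: there $\tilde{x}^{k+1}_i = x_i^k + \alpha_k d_i^k$. I would bound $|x_i^k|\ge |x_i^k-\tau g_i^k|-\tau |g_i^k| \ge \sqrt{2\tau\lambda}-\tau\|g_{\Theta_k}^k\|$, then invoke Lemma \ref{lem-2-3} to make $\|g_{\Theta_k}^k\|$ and $\|d^k\|$ arbitrarily small for large $k$; this lets me choose a positive $\eta \le \sqrt{2\tau\lambda}$ (for instance $\eta = \tfrac{1}{2}\sqrt{2\tau\lambda}$ combined with $\min(\sqrt{a})$) that is a uniform lower bound on $|\tilde{x}_i^{k+1}|$.

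For part (ii), I would combine (i) with the inclusions $\mathrm{supp}(x^{k+1})\subseteq \mathcal{I}_k$ and $\mathrm{supp}(x^k)\subseteq \mathcal{I}_{k-1}$ from \eqref{support}. Assume $\mathcal{I}_k \neq \mathcal{I}_{k-1}$ and pick an index in the symmetric difference. If some $i\in \mathcal{I}_{k-1}\setminus \mathcal{I}_k$, then $x^{k+1}_i = 0$ while $|x^k_i|\ge \eta$ by applying (i) at iteration $k-1$, so $\|x^{k+1}-x^k\|\ge |x^k_i|\ge \eta$. If instead some $i\in \mathcal{I}_k\setminus \mathcal{I}_{k-1}$, then $x^k_i=0$ while $|x^{k+1}_i|\ge \eta$ by applying (i) at iteration $k$, giving the same conclusion.

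For part (iii), I would contrast (ii) with Lemma \ref{lem-2-3}, which yields $\|x^{k+1}-x^k\|\to 0$. If $\mathcal{I}_k\neq \mathcal{I}_{k-1}$ held for infinitely many $k$, (ii) would force $\|x^{k+1}-x^k\|\ge \eta>0$ along a subsequence, contradicting the limit. Hence $\mathcal{I}_k$ stabilizes to some $\mathcal{I}_\infty$ for all sufficiently large $k$. The equality $\mathrm{supp}(x^k)=\mathcal{I}_\infty$ then follows from the two-sided inclusion: $\mathrm{supp}(x^k)\subseteq \mathcal{I}_{k-1}=\mathcal{I}_\infty$ by \eqref{support}, while (i) gives $|x^k_i|\ge \eta > 0$ for every $i\in \mathcal{I}_{k-1}=\mathcal{I}_\infty$, so $\mathcal{I}_\infty \subseteq \mathrm{supp}(x^k)$.

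The main obstacle will be the Newton sub-case of (i): unlike the projected-gradient step where the bound $\sqrt{2\tau\lambda}$ is structural, the Newton iterate only inherits a lower bound asymptotically, so I must carefully track how $\|g_{\Theta_k}^k\|\to 0$ and $\alpha_k\|d^k\|\to 0$ from Lemma \ref{lem-2-3} combine to leave a strictly positive margin. Once this uniform $\eta$ is secured, (ii) and (iii) are essentially bookkeeping driven by \eqref{support} and the vanishing of successive differences.
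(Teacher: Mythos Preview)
Your proposal is correct and follows essentially the same route as the paper. The one minor difference is in the Newton sub-case of (i): the paper exploits the fact, already recorded in Lemma~\ref{lem-2-3}, that $\|\tilde{x}^k-\bar{x}^k\|\to 0$, so once $|\bar{x}^{k+1}_i|\ge\sqrt{2\tau\lambda}$ is established the bound $|\tilde{x}^{k+1}_i|\ge\sqrt{2\tau\lambda}-\epsilon=:\eta$ drops out immediately from the triangle inequality, whereas you rebuild this margin coordinate-wise from $\|g_{\Theta_k}^k\|\to 0$ and $\|d^k\|\to 0$. Both arguments draw on the same lemma and yield the same $\eta$; the paper's version is a touch shorter. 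Parts (ii) and (iii) of your proposal match the paper's proof almost verbatim.
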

\begin{proof}
(i) By the definition of $\mathcal{I}_k$ in \eqref{eq-index}, for any $i \in \mathcal{I}_k$ and $k> 0$ it holds that  $|\bar{x}^{k}_i| = \left|\left[\Pi_{\Omega}(x^k - \tau g^k)\right]_i \right| \ge \sqrt{2\tau\lambda}$ by \eqref{eq-2-2-2}. Hence, \eqref{eq-barxk} is true. By \eqref{lem-2-3-1} there is $\|\bar{x}^k - \tilde{x}^k\| \to 0$. Thus, for any $\epsilon > 0$, there exist $K> 0$ such that for any $k>K$ there is $\|\bar{x}^k - \tilde{x}^k\| \le \epsilon$. Hence, there is
\begin{align*}
|\bar{x}^k_i| - |\tilde{x}^k_i| \le \|\bar{x}^k_i - \tilde{x}^k_i\| \le \|\bar{x}^k - \tilde{x}^k\| \le \epsilon .
\end{align*}
Which implies that $|\tilde{x}^k_i| \ge |\bar{x}^k_i| - \epsilon$. Together with \eqref{eq-barxk}, there is $|\tilde{x}^k_i| \ge \sqrt{2\tau\lambda} - \epsilon$. Hence, \eqref{eq-tildexk} holds true by letting $\epsilon < \sqrt{2\tau\lambda}$ and $\eta = \sqrt{2\tau\lambda} - \epsilon$.

(ii) From claim (i) together with Algorithm \ref{algorithm-NL0R}, we know that for sufficiently large $k$ and $i\in \mathcal{I}_{k-1}$ or $i\in \mathcal{I}_k$ there is $|x^{k}_i| \ge \eta$ or $|x^{k+1}_i| \ge \eta$. Hence, suppose that $\mathcal{I}_k \neq \mathcal{I}_{k-1}$, we obtain that
\begin{align*}
\|x^{k+1} - x^k\| \ge |x^{k+1}_i - |x^k_i | = \max (|x^{k+1}_i, |x^k_i |) \ge \eta.
\end{align*}

(iii) By \eqref{lem-2-3-1} there is $\|x^{k+1} - {x}^k\| \to 0$ which together with \eqref{eq-notsmall} implies that $\mathcal{I}_k$ does not change when $k$ is sufficient large. That is $\mathcal{I}_k = \mathcal{I}_{k-1} = \mathcal{I}_{\infty}$. From Algorithm \ref{algorithm-NL0R}, we have $x^{k+1} = \bar{x}^k$ or $x^{k+1} = \tilde{x}^k$. Together with claim (i), for sufficiently large $k$, it holds that ${\rm{supp}} (x^k) = \mathcal{I}_{k} = \mathcal{I}_{\infty}$.
Hence, the proof is complete.
\end{proof}
We hypothesize that the index set $\mathcal{I}_{k}$ will stabilize after a finite number of iterations. Prior to this, we investigate a particular type of box-constrained optimization problems. We present some technical results about a projection gradient method for this type of optimization problems that will be subsequently used in this paper. Denote $\mathcal{I}' \subseteq [1,\cdots,n]$, consider the following problem
\begin{align}\label{BL02}
\min\limits_{x\in \mathbb{R}^n} &f(x)  \nonumber \\
\mbox{s.t.} & -l_i \le x_i \le u_i,\ i\in \mathcal{I'}; \nonumber \\
& x_i = 0,\ i\in \overline{\mathcal{I'}}.
\end{align}
By substituting the vector function of the variational inequality in \cite{Pang1990} with the gradient $\nabla f(x)$, we can obtain the following Theorem.
\begin{proposition}{\rm \cite[Proposition 2.3]{Pang1990}}\label{pro-station}
Let $x\in \mathbb{X}$ and $\mathbb{X} \subseteq \mathbb{R}^n$ is a nonempty closed convex set. Then, $\| x^{*} - \Pi_{\mathbb{X}}(x^{*} - \tau \nabla f(x^{*})) \| =0$ if and only if $x^*$ solves the following problem
\begin{align}\label{VIP}
\left\langle \nabla f(x^*), x - x^* \right\rangle \ge 0,\ \text{for any}\ x\in \mathbb{X}.
\end{align}
\end{proposition}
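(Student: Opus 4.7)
The plan is to establish the equivalence via two directions, leaning on the characterization of the projection supplied by Proposition \ref{pro-projection}. Note that the norm condition $\|x^* - \Pi_{\mathbb{X}}(x^* - \tau \nabla f(x^*))\| = 0$ is equivalent to the fixed-point identity $x^* = \Pi_{\mathbb{X}}(x^* - \tau \nabla f(x^*))$, so I work with the latter throughout. I will also assume $\tau > 0$, which is the standing convention for the projected-gradient operator.

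For the direction \textbf{($\Rightarrow$)}, I will assume $x^* = \Pi_{\mathbb{X}}(x^* - \tau \nabla f(x^*))$. Applying Proposition \ref{pro-projection} with the point $z := x^* - \tau \nabla f(x^*)$ and with $y \in \mathbb{X}$ arbitrary yields
\begin{align*}
\langle \Pi_{\mathbb{X}}(z) - z,\ y - \Pi_{\mathbb{X}}(z)\rangle \ge 0.
\end{align*}
Substituting $\Pi_{\mathbb{X}}(z) = x^*$ gives $\langle \tau \nabla f(x^*),\ y - x^*\rangle \ge 0$, and dividing by $\tau > 0$ produces the variational inequality \eqref{VIP}, since $y\in \mathbb{X}$ was arbitrary.

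For the direction \textbf{($\Leftarrow$)}, I will assume \eqref{VIP} and show that $x^*$ itself achieves the projection defining $\Pi_{\mathbb{X}}(x^* - \tau \nabla f(x^*))$. Recall that
\begin{align*}
\Pi_{\mathbb{X}}(x^* - \tau \nabla f(x^*)) = \arg\min_{y \in \mathbb{X}} \tfrac{1}{2}\|y - (x^* - \tau \nabla f(x^*))\|^2,
\end{align*}
a strictly convex minimization over the nonempty closed convex set $\mathbb{X}$, whose unique minimizer $y^*$ is characterized by the first-order condition
\begin{align*}
\langle y^* - (x^* - \tau \nabla f(x^*)),\ y - y^*\rangle \ge 0\quad \text{for all } y \in \mathbb{X}.
\end{align*}
I will verify that $y^* = x^*$ satisfies this condition: the left-hand side becomes $\tau \langle \nabla f(x^*),\ y - x^*\rangle$, which is nonnegative by the assumed \eqref{VIP}. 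By uniqueness of the projection onto a closed convex set, this forces $x^* = \Pi_{\mathbb{X}}(x^* - \tau \nabla f(x^*))$, completing the proof.

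No step here is delicate; the only point requiring a little care is signaling that $\tau > 0$ is used so that dividing (or multiplying) by $\tau$ preserves the inequality, and invoking the uniqueness of the projection in the reverse direction to pin down $x^*$. Everything else is a direct application of Proposition \ref{pro-projection} and the first-order optimality characterization of projections onto closed convex sets.
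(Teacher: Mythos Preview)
Your proof is correct. The paper does not supply its own proof of this proposition; it simply cites the result from \cite{Pang1990}, so there is no in-paper argument to compare against. Your two-direction argument via Proposition~\ref{pro-projection} and the first-order characterization of the projection is the standard route, and the only implicit hypothesis you rely on beyond what is written---that $x^*\in\mathbb{X}$ so it is a feasible candidate in the reverse direction---is already part of the statement (the ``Let $x\in\mathbb{X}$'' is evidently a typo for $x^*\in\mathbb{X}$).
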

\begin{definition}
Let $f: \mathbb{R}^{n} \to \mathbb{R}$. We call $x^*$ is an stationary point if \eqref{VIP} holds for
\begin{align*}
\min\limits_{x\in \mathbb{R}^n} f(x)\ {\rm{s.t.}}\ x\in \mathbb{X}.
\end{align*}
\end{definition}
By \cite[Proposition 1.2.3]{facchinei2003finite}, define $ \mathbb{X} = \left\{ x\in \Omega \, \mid \, x_i = 0,\, i\in \overline{\mathcal{I'}} \right\} $, we know that if \eqref{VIP} holds then $x^*$ is an stationary point of \eqref{BL02}. The following Lemma serves as a prelude to demonstrate that index set $\Theta_k$ and $\Gamma_k$ can be finally identified.
\begin{lemma}\label{lem-kkt}
Consider problem \eqref{BL02}. Let $x^*$ satisfies $\| x^{*} - \Pi_{\mathbb{X}}(x^{*} - \tau \nabla f(x^{*})) \| =0$. If strict complementarity and Abadie's CQ holds at $x^*$, then we have
\begin{align*}
|x^*_i -\tau \nabla_i f(x^*) | \neq l_i\ \text{or}\ u_i,\ \text{for any}\ i\in \mathcal{I}'.
\end{align*}
\end{lemma}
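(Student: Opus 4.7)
The plan is to convert the zero-residual projection condition into KKT multipliers for \eqref{BL02}, and then use strict complementarity to make the relevant inequalities strict on the active bounds. First, by Proposition \ref{pro-station}, the hypothesis $\|x^* - \Pi_{\mathbb{X}}(x^* - \tau \nabla f(x^*))\| = 0$ is equivalent to the variational inequality $\langle \nabla f(x^*), x - x^* \rangle \ge 0$ for all $x \in \mathbb{X}$, so $x^*$ is a stationary point of \eqref{BL02}. Abadie's CQ at $x^*$ then allows us to translate this variational inequality into the classical KKT system: for each $i \in \mathcal{I}'$ there exist multipliers $\mu_i, \nu_i \ge 0$ with $\nabla_i f(x^*) = \nu_i - \mu_i$, $\mu_i(u_i - x_i^*) = 0$ and $\nu_i(x_i^* + l_i) = 0$.

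Next I would split each index $i \in \mathcal{I}'$ into three cases according to which bound of the box is active at $x^*$. If $x_i^* = u_i$, then $x_i^* + l_i > 0$ forces $\nu_i = 0$, and strict complementarity gives $\mu_i > 0$; hence $\nabla_i f(x^*) = -\mu_i < 0$ strictly, so $x_i^* - \tau \nabla_i f(x^*) = u_i + \tau \mu_i > u_i$, which is neither $u_i$ nor $-l_i$. By a symmetric argument, if $x_i^* = -l_i$ then $\mu_i = 0$, $\nu_i > 0$, and $x_i^* - \tau \nabla_i f(x^*) = -l_i - \tau \nu_i < -l_i$. Finally, if $-l_i < x_i^* < u_i$, both bound constraints are strictly inactive, so $\mu_i = \nu_i = 0$ and therefore $\nabla_i f(x^*) = 0$, giving $x_i^* - \tau \nabla_i f(x^*) = x_i^* \in (-l_i, u_i)$, which again is neither $u_i$ nor $-l_i$. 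This produces the claimed conclusion in every case.

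The only delicate point is recognizing where strict complementarity is indispensable: without it, the degenerate combination $x_i^* = u_i$ with $\nabla_i f(x^*) = 0$ (or $x_i^* = -l_i$ with $\nabla_i f(x^*) = 0$) would place $x_i^* - \tau \nabla_i f(x^*)$ exactly on the threshold $u_i$ or $-l_i$ appearing in the definitions \eqref{eq-2-6} of $\Theta_k, \Gamma^u_k, \Gamma^l_k$, and the identification of these index sets in a neighborhood of $x^*$ would fail. Strict complementarity rules out precisely this degeneracy. Once this is observed, the proof reduces to a routine case analysis on the KKT conditions; I do not foresee any other technical obstacle.
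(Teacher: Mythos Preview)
Your proposal is correct and follows essentially the same route as the paper: both invoke Proposition~\ref{pro-station} and Abadie's CQ to pass from the projection condition to KKT multipliers, then use strict complementarity to force strict signs on $\nabla_i f(x^*)$ at active bounds, and finish with a case split over whether $x_i^*$ equals $u_i$, $-l_i$, or lies strictly between. The only cosmetic difference is that the paper phrases the final step as a contradiction (assume $x_i^* - \tau\nabla_i f(x^*) = u_i$ and derive an inconsistency), whereas you compute the value directly in each case; the substance is identical.
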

\begin{proof}
The Lagrange function of \eqref{BL02} is
$$ \mathcal{L}(x) = f(x) + \sum_{i=1}^{|\mathcal{I'}|}\xi_i (x_i - u_i) -  \sum_{i=1}^{|\mathcal{I'}|}\beta_i (x_i + l_i) + \sum_{i=1}^{|\overline{\mathcal{I'}}|}\nu_i x_i .$$
The derivatives of $\mathcal{L}(x)$ is
$$ \nabla_{x} \mathcal{L}(x) = \nabla f(x) + \xi - \beta + \nu. $$
By $\| x^{*} - \Pi_{\mathbb{X}}(x^{*} - \tau \nabla f(x^{*})) \| =0$ and Proposition \ref{pro-station}, we obtain that $x^*$ is a stationary point of \eqref{BL02}. Together with Abadie's CQ and \cite[Proposition 1.3.4]{facchinei2003finite} we obtain that \eqref{VIP} is the KKT conditions of \eqref{BL02}. Therefor, it holds that
$$ \nabla_{x^*} \mathcal{L}(x^*) = \nabla f(x^*) + \xi - \beta + \nu = 0. $$
By the strict complementarity on $x^*$, we obtain that
\begin{align*}
\begin{cases}
\xi_i > 0,\ \beta_i = 0,\ \nu_i = 0, \ &\text{if}\ x^*_i - u_i = 0,\ i\in \mathcal{I}' ;\\
\beta_i > 0,\ \xi_i = 0,\ \nu_i = 0, \ &\text{if}\ x^*_i + l_i = 0,\ i\in \mathcal{I}'  ; \\
\xi_i = \beta_i = \nu_i = 0, \ &\text{if}\ x^*_i\in (-l_i, u_i),\ i\in \mathcal{I}' ;\\
\nu_i \in \mathbb{R},\ \beta_i = 0=\xi_i = 0, \ &\text{if}\ x^*= 0,\ i\in \overline{\mathcal{I}'}  .
\end{cases}
\end{align*}
By $\nabla_{x} \mathcal{L}(x^*) = 0$, it holds that
\begin{align*}
\begin{cases}
\nabla_i f(x^*) = -\xi_i < 0, \ &\text{if}\ x^*_i - u_i = 0,\ i\in \mathcal{I}'  ;\\
\nabla_i f(x^*) = \beta_i > 0, \ &\text{if}\ x^*_i + l_i = 0,\ i\in \mathcal{I}'  ; \\
\nabla_i f(x^*) = 0, \ &\text{if}\ x^*_i\in (-l_i, u_i),\ i\in \mathcal{I}' ;\\
\nabla_i f(x^*) = -\nu_i , \ &\text{if}\ x^*= 0,\ i\in \overline{\mathcal{I}'}  .
\end{cases}
\end{align*}
We suppose that there exist $i\in \mathcal{I}'$ such that $x^*_i - \tau \nabla_i f(x^*) = u_i$. If $x^*_i = u_i$ then $\tau \nabla_i f(x^*) = x^*_i -  u_i = 0$ which contradict with $\nabla_i f(x^*) < 0$. If $x^*_i \in (-l_i, u_i)$ then $\tau \nabla_i f(x^*) = x^*_i -  u_i \neq 0$ which contradicts with $\nabla_i f(x^*) = 0$. Hence, $x^*_i - \tau \nabla_i f(x^*) \neq u_i$ and similarly $x^*_i - \tau \nabla_i f(x^*) \neq -l_i$.
\end{proof}
We are ready to conclude from above that the index set $\mathcal{I}_{k}$ can be identified within finite steps and the sequence converges to a $\tau$-stationary point or a local minimizer globally that are presented by the following theorem.
\begin{theorem}\label{the-2-1}
{\rm (Convergence and identification of $\mathcal{I}_{k}$ )} Let $f$ be strongly smooth with respect to the constant $L > 0$, and let $\overline{\tau}$ be defined as above. The sequence $\{x^k\}$ is generated by the algorithm, with $\tau \in (0, \overline{\tau})$ and $\delta \in (0, \min \{1, 2L\})$. The strict complementarity and Abadie's CQ holds at the limit point $x^*$.  Then
\begin{description}
\item{(i)} There exist a subsequence $\{x^{k_t}\}$ of $\{x^k\}$ such that $\Theta_{k_t} \equiv \Theta_{\infty}$ and $\Gamma_{k_t} \equiv \Gamma_{\infty}$ for sufficiently large $k_t$.
\item{(ii)} Any limit point $x^{*}$ satisfies
\begin{align}\label{the-2-1-1}
\nabla_{\Theta_{\infty}}f(x^*) = 0, \ d^*_{\Gamma_{\infty}} = 0,\ x_{\overline{\mathcal{I}}_{\infty}}^* = 0,\ {\rm{supp}}(x^*) \subset \mathcal{I}_{\infty}.
\end{align}
Where $\mathcal{I}_{\infty}$ defined in Lemma \ref{lem-constantindex}. For $\Gamma^u_{\infty} := \Gamma^u_{k_t}$ and $\Gamma^l_{\infty} := \Gamma^l_{k_t}$, there is
\begin{align}\label{eq-Gammainfty}
x^*_{\Gamma^{u}_{\infty}} = u_{\Gamma^{u}_{\infty}},\ x^*_{\Gamma^{l}_{\infty}} = -l_{\Gamma^{u}_{\infty}}.
\end{align}
And $x^{*} \neq 0$, $x^* \in \Omega$. Furthermore, if $\tau_{*}$ satisfies
\begin{align}\label{the-2-1-2}
0 < \tau_{*} < \min \left\{ \overline{\tau},\ \min\limits_{i \in {\rm{supp}}(x^*)} |x_{i}^{*}|^2/(2\lambda),\ 2\lambda/(b^2) \right\},
\end{align}
where
\begin{align}\label{eq-b}
b := \max \left\{\max\limits_{i\in \overline{{\rm{supp}}(x^*)}} |g_i^*|, 1 \right\},
\end{align}
then $x^{*}$ is a $\tau_{*}$-stationary point .
\item{(iii)} If $x^{*}$ is an isolated point, then $x^k \to x^{*}$ .
\end{description}
\end{theorem}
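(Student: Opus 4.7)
The plan is to prove the three claims in order, using pigeonhole on the stabilized index set $\mathcal{I}_\infty$ from Lemma \ref{lem-constantindex}, the vanishing relations of Lemma \ref{lem-2-3}, and Lemma \ref{lem-kkt} to upgrade weak to strict inequalities at the limit.

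For (i), I will use Lemma \ref{lem-constantindex}(iii) to fix $\mathcal{I}_k\equiv\mathcal{I}_\infty$ for all sufficiently large $k$; since each such $\mathcal{I}_k$ admits only finitely many partitions into $(\Theta_k,\Gamma_k)$ and each $\Gamma_k$ only finitely many splittings into $(\Gamma^u_k,\Gamma^l_k)$, the pigeonhole principle extracts a subsequence $\{x^{k_t}\}$ on which all four index sets are constant, and I label the common values $\Theta_\infty,\Gamma_\infty,\Gamma^u_\infty,\Gamma^l_\infty$.

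For (ii), I fix a limit point $x^*$ and, by refining the subsequence from (i), arrange $x^{k_t}\to x^*$. Taking limits along $\{x^{k_t}\}$ in the vanishing quantities from Lemma \ref{lem-2-3} gives $\nabla_{\Theta_\infty}f(x^*)=0$, $d^*_{\Gamma_\infty}=0$, and $x^*_{\overline{\mathcal{I}}_\infty}=0$; substituting $d^*_{\Gamma_\infty}=0$ into the defining formulas on $\Gamma_k$ in \eqref{direction} or \eqref{alg-2-3} yields the boundary identities \eqref{eq-Gammainfty}. Passing to the limit in the defining inequalities $x^{k_t}_i-\tau g^{k_t}_i\ge u_i$ on $\Gamma^u_\infty$ (resp.\ $\le -l_i$ on $\Gamma^l_\infty$) combined with these identities produces the sign conditions $\nabla_i f(x^*)\le 0$ (resp.\ $\ge 0$). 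The support containment ${\rm supp}(x^*)\subseteq\mathcal{I}_\infty$ follows from ${\rm supp}(x^k)=\mathcal{I}_\infty$ eventually together with $x^k\to x^*$, while $x^*\in\Omega$ is inherited from iterate feasibility. To rule out $x^*=0$, I will argue by contradiction: $x^*=0$ would force $\mathcal{I}_\infty=\emptyset$ via Lemma \ref{lem-constantindex}(i) and hence $x^k\equiv 0$ for large $k$, yet the Remark following Proposition \ref{pro-3} then yields $\emptyset\neq\mathbb{K}\subseteq\mathcal{I}_\tau(0)=\mathcal{I}_{k+1}=\mathcal{I}_\infty$, contradicting $\nabla f(0)\neq 0$.

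The main obstacle is verifying $\tau_*$-stationarity of $x^*$ via the cases of Lemma \ref{lemma-2}. The relations just obtained are exactly the KKT conditions of the reduced problem \eqref{BL02} with $\mathcal{I}'=\mathcal{I}_\infty$, so Proposition \ref{pro-station} gives $\|x^*-\Pi_\mathbb{X}(x^*-\tau\nabla f(x^*))\|=0$, after which Lemma \ref{lem-kkt} yields $x^*_i-\tau\nabla_i f(x^*)\notin\{-l_i,u_i\}$ for every $i\in\mathcal{I}_\infty$. For $i\in\Theta_\infty$, combining this with $\nabla_i f(x^*)=0$ and $x^*\in\Omega$ forces $x^*_i\in(-l_i,u_i)$, while the limit of $|x^{k_t}_i-\tau g^{k_t}_i|\ge\sqrt{2\tau\lambda}$ with $g^{k_t}_i\to 0$ gives $|x^*_i|\ge\sqrt{2\tau\lambda}>\sqrt{2\tau_*\lambda}$ via $\tau_*<|x^*_i|^2/(2\lambda)$. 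For $i\in\Gamma^u_\infty$ and $\Gamma^l_\infty$, the respective cases of Lemma \ref{lemma-2} are immediate from the boundary identities and sign conditions already established. For $i\in\overline{\mathcal{I}}_\infty\subseteq\overline{{\rm supp}(x^*)}$, $x^*_i=0$ and $|\nabla_i f(x^*)|\le b<\sqrt{2\lambda/\tau_*}$ via \eqref{eq-b} and $\tau_*<2\lambda/b^2$, completing the case analysis.

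For (iii), I will apply the standard fact that if $\|x^{k+1}-x^k\|\to 0$ (Lemma \ref{lem-2-3}) and $x^*$ is an isolated accumulation point of $\{x^k\}$, then the whole sequence converges to $x^*$, since any sufficiently small open ball around $x^*$ misses every other accumulation point and, once the step length drops below the distance from the ball to its complement, the sequence cannot exit after entering along the converging subsequence.
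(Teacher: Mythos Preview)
Your proposal is largely correct and in places cleaner than the paper's argument, but there is one ordering slip you should fix. In (ii) you write ``by refining the subsequence from (i), arrange $x^{k_t}\to x^*$''. There is no guarantee that the pigeonhole subsequence extracted in (i) accumulates at the given limit point $x^*$; if $\{x^k\}$ has several cluster points, your (i)-subsequence may miss $x^*$ entirely, and then the limit computations you rely on (e.g.\ $\nabla_{\Theta_\infty}f(x^{k_t})\to\nabla_{\Theta_\infty}f(x^*)$) are unjustified. The fix is to reverse the order: first pass to a subsequence converging to $x^*$, then apply pigeonhole on that subsequence. This is exactly what the paper does implicitly, and once you make this change your argument goes through.

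Apart from this, your route differs from the paper's in several respects. For (i) the paper does not use pigeonhole at all: it defines $\Gamma_*$ intrinsically at $x^*$, invokes Lemma~\ref{lem-kkt} to turn the limiting weak inequalities $x^*_i-\tau\nabla_i f(x^*)\ge u_i$ into strict ones, and then uses continuity to conclude $\Gamma_{k_t}=\Gamma_*$ along the whole convergent subsequence. Your pigeonhole is more elementary and suffices for the statement as written, though it gives no identification of $\Gamma_\infty$ with a set defined at $x^*$ (which the paper later exploits in Theorem~\ref{the-2-2}). Your $x^*\neq 0$ argument via $\mathcal{I}_\infty=\emptyset$, Lemma~\ref{lem-constantindex}(iii), and the Remark after Proposition~\ref{pro-3} is shorter than the paper's estimate-based proof. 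For $\tau_*$-stationarity the paper establishes $\mathrm{supp}(x^*)=\mathcal{I}_*$ and then appeals to Proposition~\ref{pro-3}, whereas you verify the cases of Lemma~\ref{lemma-2} directly; your invocation of Lemma~\ref{lem-kkt} here to force $x^*_i\in(-l_i,u_i)$ on $\Theta_\infty$ is in fact unnecessary, since $\nabla_i f(x^*)=0$ already places $x^*$ in case~2 or case~3 of Lemma~\ref{lemma-2} should $x^*_i$ hit a bound.
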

\begin{proof}(i) From Lemma \ref{lem-constantindex} we know that for sufficiently large $k$, $\mathcal{I}_k \equiv \mathcal{I}_{k-1} \equiv \mathcal{I}_{\infty}$. By \eqref{lem-2-3-1}, there is $\lim\limits_{k\to\infty} \|g_{\Theta_k}\| = 0$ and $\lim\limits_{k\to\infty} \|d_{\Gamma_k}\| = 0$. Since $$x^k_{\Theta_k} - \left[ \Pi_{\Omega}(x^{k} - \tau \nabla f(x^{k})) \right]_{\Theta_k} = \tau \nabla_{\Theta_k} f(x^{k}).$$
It holds that $ \lim\limits_{k\to\infty} \| x^{k}_{\mathcal{I}_k} - \left[ \Pi_{\Omega}(x^{k} - \tau \nabla f(x^{k})) \right]_{\mathcal{I}_k} \| =0 $. Let $\{x^{k_t}\}$ be the convergent subsequence of $\{x^k\}$ that converges to $x^*$. Therefore we have
$$ \| x^{*}_{\mathcal{I}_{\infty}} - \left[ \Pi_{\Omega}(x^{*} - \tau \nabla f(x^{*})) \right]_{\mathcal{I}_{\infty}}  \| =  \lim\limits_{k_t \to\infty} \| x^{k_t}_{\mathcal{I}_{\infty}} - \left[ \Pi_{\Omega}(x^{k_t} - \tau \nabla f(x^{k_t})) \right]_{\mathcal{I}_{\infty}}  \|  =0, $$
and $ x^{*}_{\overline{ \mathcal{I} }_{\infty}} = x^{k_t}_{\overline{ \mathcal{I} }_{\infty}} =0 $ for sufficiently large $k_t$. With $\mathcal{I}'$ replaced by $\mathcal{I} _{\infty}$ in problem \eqref{BL02}, it follows immediately that $x^*$ is a stationary point of problem \eqref{BL02}. Denote
\begin{align*}
&\Theta_* := \{ i \in \mathbb{N}^n\ |\ a_i > |x_i^* - \tau \nabla_i f(x^*)| \ge \sqrt{2\tau \lambda} \} ,\\
&\Gamma_*^{u} := \{ i \in \mathbb{N}^n\ |\  x_i^* - \tau \nabla_i f(x^*) \ge u_i  \} ,\\
&\Gamma_*^{l} := \{ i \in \mathbb{N}^n\ |\ x_i^* - \tau \nabla_i f(x^*) \le -l_i  \} ,\\
&\Gamma_* := \Gamma_*^{u}\cup \Gamma_*^{l},\\
&\mathcal{I}_* := [\Theta_*, \Gamma_*] =  \{ i \in \mathbb{N}^n\ |\  |x_i^* - \tau \nabla_i f(x^*)| \ge \sqrt{2\tau \lambda} \} .
\end{align*}
For simplicity, we will only consider the case of $\Gamma_*^{u}$. From Lemma \ref{lem-kkt}, we obtain that for any $i\in \mathcal{I}_{\infty} $ there is $|x^*_i -\tau \nabla_i f(x^*) | \neq u_i$ or $l_i$. Hence, for any $i\in \Gamma^{u}_*$ there is $x_i^* - \tau \nabla_i f(x^*) > u_i$, by the continuity of $| \cdot |$ and $f$ together with $x^{k_t} \to x^*$, it holds that $x^{k_t}_i - \tau \nabla_i f(x^{k_t}) \ge u_i $ for sufficiently large $k_t$. By the definition \eqref{eq-2-6}, we obtain $i \in \Gamma^{u}_{k_t}$ which is defined in \eqref{eq-index}. Conversely, if $i \in \Gamma^{u}_{k_t}$ for sufficiently large $k_t$. By the continuity, it holds that $ x_i^* - \tau \nabla_i f(x^*) \ge u_i $ then $i\in \Gamma^{u}_*$. $\Gamma_*^{l}$ is similar to $\Gamma_*^{u}$.  Therefore
\begin{align}\label{eq-Gammaequiv}
\Gamma_{k_t} \equiv \Gamma_{\infty} := \Gamma_*
\end{align}
 holds for sufficiently large $k$. Because $\mathcal{I}_{k_t} \equiv \mathcal{I}_{\infty}$, then $\Theta_{k_t} \equiv \Theta_{\infty} := \mathcal{I}_{\infty} \setminus \Gamma_*$ holds for sufficiently large $k_t$.

(ii) By the continuity and \eqref{lem-2-3-1}, for sufficiently large $k_t$, it holds that
\begin{align}\label{eq-3.30}
\begin{cases}
\nabla_{\Theta_{\infty}}f(x^*) = \nabla_{\Theta_{k_t}}f(x^*) = \lim\limits_{k_t \to \infty}\nabla_{\Theta_{k_t}}f(x^{k_t}) = 0 , \\
\| d_{\Gamma_{\infty}}^* \|:= \| \left[\Pi_{\Omega}(x^* - \tau g^*)\right]_{\Gamma_{\infty}} - x^*_{\Gamma_{\infty}} \| = \|d_{\Gamma_{k_t}}^{*}\| = \lim\limits_{k_t \to \infty}\|d_{\Gamma_{k_t}}^{k_t} \| = 0 .
\end{cases}
\end{align}
It is obviously that $x_{\overline{\mathcal{I}}_{\infty}}^* = 0$, hence \eqref{the-2-1-1} is true. Because $\left[ \Pi_{\Omega}(x^* - \tau g^*)\right]_{\Gamma^{u}_{\infty}}  = u_{\Gamma^{u}_{\infty}}$, we obtain that $x^*_{\Gamma^{u}_{\infty}}  = u_{\Gamma^{u}_{\infty}}$. Similarly, there is $x^*_{\Gamma^{l}_{\infty}}  = -l_{\Gamma^{l}_{\infty}}$. Hence, \eqref{eq-Gammainfty} is true.

Next, we claim that $x^* \neq 0$. Suppose $ x^* = 0 $. Algorithm \ref{algorithm-NL0R} runs infinite steps only when $ \nabla f(0) \neq 0 $. For $i\in \{i\in \mathbb{N}_{n}\mid \nabla_i f(0) \neq 0\}$. By $ x^{k_t} \to x^* = 0 $ and the continuous of $\nabla f(\cdot)$, there is a sufficiently small $\epsilon > 0$, such that
\begin{align}\label{eq-xf0leepsilon}
\min (a/2, \sqrt{2\tau \underline{\lambda}}) > \epsilon  \ \mbox{and}\ |x^{k_t}_i| + \tau |\nabla_{i} f(0) - \nabla_i f(x^{k_t})| \le \epsilon,
\end{align}
for sufficiently large $k$, where $a$ defined in \eqref{eq-tau}.
By \eqref{eq-2.13} and $ \lambda \in (0, \underline{\lambda}) $, we have $\tau |\nabla_i f(0)| \ge \sqrt{2\tau \underline{\lambda}}$. Together with \eqref{eq-xf0leepsilon}, it holds that
\begin{align}\label{eq-3.31}
|x^{k_t}_i - \tau g^{k_t}_i| & \ge \tau |\nabla_i f(0)| - |x^{k_t}_i| - \tau |\nabla_i f(0) - \nabla_i f(x^{k_t})| \nonumber \\
&\ge \sqrt{2\tau \underline{\lambda}} - \epsilon \ge \sqrt{2\tau \lambda} .
\end{align}
By $\tau \le \bar{\tau}$ and \eqref{2-1-1}, we have
\begin{align*}
\tau |\nabla_i f(0)| \le \tau \max\limits_{i} |\nabla_i f(0)| \le a/2.
\end{align*}
Together with \eqref{eq-xf0leepsilon}, it holds that
\begin{align}\label{eq-3.31-2}
|x^{k_t}_i - \tau g^{k_t}_i| & \le \tau |\nabla_i f(0)| + |x^{k_t}_i| + \tau |\nabla_i f(0) - \nabla_i f(x^{k_t})| \nonumber \\
&\le a/2 + \epsilon < a .
\end{align}
Combine with \eqref{eq-xf0leepsilon}, \eqref{eq-3.31} and \eqref{eq-3.31-2}, we know that
\begin{align*}
a > a/2 + \epsilon \ge |x^{k_t}_i - \tau g^{k_t}_i| \ge \sqrt{2\tau \lambda} .
\end{align*}
By the definition of $a$ and $\Theta_{k_t}$ we obtain that $i\in \Theta_{k_t} $. By $\Theta_{k_t} \equiv \Theta_{\infty}$ and \eqref{eq-3.30}, for $i \in \Theta_{k_t} \equiv \Theta_{\infty}$ there is $ g_{k_t}^i \to 0 $. Combine with $ x_{k_t}^i \to 0 $, we obtain that $|x^{k_t}_i - \tau g^{k_t}_i|\to 0$ which contradicting \eqref{eq-3.31}. Thus, $ x^* \neq 0 $.

Now we need to prove that $x^{*}$ is a $\tau_{*}$-stationary point, and for this, we need to first prove that $ \text{supp} (x^*) = \mathcal{I}_*$, where
\begin{align*}
\mathcal{I}_* = \{ i \in \mathbb{N}^n \, \mid \, |x^*_i - \tau_* \nabla_i f(x^*)|  \ge \sqrt{2\tau_* \lambda} \}.
\end{align*}
It is obvious that, $\text{supp}(x^*) \subseteq \text{supp} (x^{k_t + 1}) = \mathcal{I}_{\infty} $. First, given $i \in \text{supp} (x^*)$ we prepare to prove $i\in \mathcal{I}_*$. There are two possibilities: either $i\in \Theta_{\infty}$ or $i \in \Gamma_{\infty}$. Since $\Gamma_{\infty} = \Gamma_{*}$ holds by its definition in \eqref{eq-Gammaequiv}. If $i\in  \Gamma_{\infty}$, there is $i\in \Gamma_* \subseteq \mathcal{I}_*$. Hence, we only need to consider $i\in \Theta_{\infty}$. Then, we have $\nabla_{i} f(x^*) = 0$ by $\nabla_{\Theta_{\infty}} f(x^*) = 0$. We proceed by contradiction, assume that $i\notin \mathcal{I}_*$ that is $|x^*_i - \tau_* \nabla_i f(x^*)| < \sqrt{2\tau_* \lambda}$. From \eqref{the-2-1-2}, there is $2\tau_*\lambda \le \min\limits_{i \in \text{supp}(x^*)} |x_{i}^{*}|^2$ , moreover it holds that
$$|x^*_i| = |x^*_i - \tau_* \nabla_i f(x^*)| < \sqrt{2\tau_* \lambda} \le |x^*_i|,$$
that is $|x^*_i| < |x^*_i|$, which results in a contradiction, the hypothesis is rejected. The contradiction argument is now complete. That is $i\in \mathcal{I}_*$. Hence, $\text{supp} (x^*)\subseteq \mathcal{I}_{*}$.

Second, proving $\mathcal{I}_* \subseteq \text{supp} (x^*)$ is equivalent to proving $\overline{ \text{supp} (x^*)} \subseteq \overline{ \mathcal{I}}_* $. Consider $i\in \overline{ \text{supp} (x^*)}$, there is $x^{*}_i = 0$. By \eqref{eq-b}, it holds that
\begin{align*}
|\tau_* \nabla_i f(x^*)|^2 \le \tau_* \max\limits_{i\in \overline{\text{supp}(x^*)}} |\nabla_i f(x^*)| < |\tau_* b|^2 .
\end{align*}
Combine with $|\tau_* b|^2 = \tau_*(\tau_* b^2) \le \tau_*(2\lambda)$ from \eqref{the-2-1-2},
we obtain that
$$ |x^*_i - \tau_* \nabla_i f(x^*)|^2 = |\tau_* \nabla_i f(x^*)|^2 < |\tau_* b|^2 \le 2\tau_* \lambda ,$$
which contradict with $ |x^*_i - \tau_* \nabla_i f(x^*)| \ge \sqrt{2\tau_* \lambda}$ by the definition of $\mathcal{I}_*$.

From above, we obtain that $ \text{supp} (x^*) = \mathcal{I}_* = [\Theta_*, \Gamma_* ] $. This, together with $ \nabla_{\Theta_*} f(x^*) = 0 $, $d_{\Gamma_*}^* = 0$ and $ x^*_{\overline{\mathcal{I}}_*} = 0 $ from \eqref{the-2-1-1}, suffices to obtain that $F_{\tau_*}(x^*;\Theta_*;\Gamma_*;\overline{\mathcal{I}}_*) = 0$. Finally, Proposition \ref{pro-3} allows us to claim that $ x^* $ is a $ \tau_* $-stationary point.

(iii) By \cite[Lemma 4.10]{more1983computing} and $\|x^{k+1} - x^k\| \to 0$ from Lemma \ref{lem-2-3}, then, if $x^*$ is isolated  , the whole sequence converges to $x^*$.
\end{proof}

\begin{theorem}\label{the-2-2}
{\rm (Global and quadratic convergence)} Let $\left\{x^{k}\right\}$ be the sequence generated by Algorithm \ref{algorithm-NL0R} with $\tau \in(0, \overline{\tau})$ and $\delta \in\left(0, \min \left\{1, \ell_{*}\right\}\right)$ and $x^{*}$ be one of its accumulation points. Assume that strict complementarity and Abadie's CQ holds at  $x^*$. Suppose $f$ is strongly smooth with constant $L>0$ and locally strongly convex with $\ell_{*}>0$ around $x^{*}$. Then, the following results hold.
\begin{description}
\item{(i)} The whole sequence converges to $x^{*}$, a strictly local minimizer of \eqref{PBL0}.

\item{(ii)} The Newton direction is always accepted for sufficiently large $k$.

\item{(iii)} If the Hessian of $f$ is locally Lipschitz continuous around $x^*$ with constant $M_* \ge 0$, then for sufficiently large $k$, it holds that
    \begin{align}
    \|x^{k+1} - x^* \| \le M_{*}/(2\tilde{\ell}_{*}) \|x^k - x^* \|^2.
    \end{align}
where, $\tilde{\ell}_{*} = \min (1,\ell_*)$.
\end{description}
\end{theorem}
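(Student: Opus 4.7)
The plan is to handle the three parts in order, using the index-stabilization results of Lemma \ref{lem-constantindex} and Theorem \ref{the-2-1} together with the local strong convexity of $f$. For part (i), Theorem \ref{the-2-1}(ii) already yields that any accumulation point $x^*$ is a $\tau_*$-stationary point for every $\tau_*$ satisfying \eqref{the-2-1-2}; by additionally taking $\tau_* > 1/\ell_*$ and restricting to a ball $B(x^*, r)$ on which $f$ is $\ell_*$-strongly convex, a localized version of the Proposition relating $\tau$-stationary points to minimizers forces $x^*$ to be the unique minimizer of \eqref{PBL0} on that neighborhood, hence a strict local minimizer and an isolated accumulation point. Theorem \ref{the-2-1}(iii) then upgrades subsequential convergence to $x^k \to x^*$.

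For part (ii), the goal is to check that criterion \eqref{zerodecent} is met by the Newton direction for all sufficiently large $k$. Lemma \ref{lem-constantindex}(iii) provides $\mathcal{I}_k \equiv \mathcal{I}_{k-1} \equiv \mathcal{I}_\infty$ and $\text{supp}(x^k) = \mathcal{I}_\infty$, which immediately handles the cardinality condition $|\mathcal{I}_k|\le\|x^k\|_0$ and the final condition of \eqref{zerodecent}. Theorem \ref{the-2-1}(i) gives $\Theta_k \equiv \Theta_\infty$ and $\Gamma_k \equiv \Gamma_\infty$, while $J_k = \emptyset$ and $x^k_{\overline{\mathcal{I}}_k} = 0$; combined with \eqref{eq-bound} this also gives $d^k_{\Gamma_k} = 0$, so the Newton system collapses to $\nabla^2_{\Theta_\infty,\Theta_\infty} f(x^k)\,d^k_{\Theta_\infty} = -\nabla_{\Theta_\infty} f(x^k)$. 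Substituting these vanishings into \eqref{eq-3-1} and invoking $\nabla^2_{\Theta_\infty,\Theta_\infty} f(x^k) \succeq \ell_* I$ delivers $\langle g^k_{\mathcal{I}_k}, d^k_{\mathcal{I}_k}\rangle \le -\ell_*\|d^k\|^2 \le -\delta\|d^k\|^2$ since $\delta < \ell_*$. Feasibility $x^k + d^k \in \Omega$ is automatic on $\Gamma_\infty$ and $\overline{\mathcal{I}}_\infty$, while on $\Theta_\infty$ it follows from the strict-interior location $x^*_{\Theta_\infty} \in (-l_{\Theta_\infty}, u_{\Theta_\infty})$ implied by \eqref{eq-2-3} together with $d^k \to 0$ from Lemma \ref{lem-2-3}.

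For part (iii), I would first show $\alpha_k = 1$ for all large $k$. A second-order Taylor expansion with Lipschitz Hessian combined with the identity $\langle d^k, \nabla^2 f(x^k) d^k\rangle = -\langle g^k, d^k\rangle$ arising from the reduced Newton equation yields, for any $\sigma < 1/2$,
\[
f(x^k + d^k) - f(x^k) - \sigma\langle g^k, d^k\rangle \le -\ell_*(1/2-\sigma)\|d^k\|^2 + O(\|d^k\|^3),
\]
which is nonpositive for large $k$ since $\|d^k\|\to 0$. With $\alpha_k = 1$, the identifications $x^{k+1}_{\Gamma_\infty} = x^*_{\Gamma_\infty}$ (from \eqref{eq-Gammainfty} and \eqref{eq-bound}) and $x^{k+1}_{\overline{\mathcal{I}}_\infty} = 0 = x^*_{\overline{\mathcal{I}}_\infty}$ reduce the error to $\|x^{k+1} - x^*\| = \|x^{k+1}_{\Theta_\infty} - x^*_{\Theta_\infty}\|$. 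Inserting $x^{k+1}_{\Theta_\infty} = x^k_{\Theta_\infty} - [\nabla^2_{\Theta_\infty,\Theta_\infty} f(x^k)]^{-1}\nabla_{\Theta_\infty} f(x^k)$ into the standard Newton error identity and using $\nabla_{\Theta_\infty} f(x^*) = 0$, the Lipschitz Hessian bound $M_*$, and the eigenvalue lower bound $\tilde\ell_*$ on $\nabla^2_{\Theta_\infty,\Theta_\infty} f$ near $x^*$, one obtains the desired $\|x^{k+1} - x^*\| \le (M_*/(2\tilde\ell_*))\|x^k - x^*\|^2$.

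The main obstacle I anticipate lies in part (iii): the identity $\langle d^k, \nabla^2 f(x^k) d^k\rangle = -\langle g^k, d^k\rangle$ underlying the Armijo unit step hinges on having $d^k_{\Gamma_k} = 0$ and $d^k_{\overline{\mathcal{I}}_k} = 0$, so the entire quadratic-rate argument rests on the index stabilization established in part (ii); additional care is needed to ensure that the spectral bounds $\tilde\ell_* I \preceq \nabla^2_{\Theta_\infty,\Theta_\infty} f(x^k) \preceq L I$ remain valid uniformly along the tail of the sequence, not merely at the limit $x^*$.
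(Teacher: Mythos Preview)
Your plan for parts (ii) and (iii) is sound and, in (iii), even a bit cleaner than the paper: once you have $d^k_{\Gamma_k}=0$, $d^k_{\overline{\mathcal I}_k}=0$ and the reduced Newton equation on $\Theta_\infty$, your direct Taylor argument for $\alpha_k=1$ avoids the paper's appeal to \cite[Theorem 3.3]{facchinei1995minimization}. The paper instead establishes the Newton error bound $\ell_*\|x^k_{\Theta_k}-x^*_{\Theta_k}+d^k_{\Theta_k}\|\le \tfrac{M_*}{2}\|x^k-x^*\|^2$ first and then quotes that external result to force unit step size; both routes arrive at the same quadratic estimate.

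However, your approach to part (i) has a genuine gap. You propose to pick $\tau_*$ satisfying \eqref{the-2-1-2} \emph{and} $\tau_*>1/\ell_*$, then invoke the proposition that a $\tau$-stationary point with $\tau>1/\ell$ is the unique global minimizer. But \eqref{the-2-1-2} forces $\tau_*<\overline{\tau}\le 1/(4L)$, while strong convexity and strong smoothness on the same neighborhood give $\ell_*\le L$, hence $1/\ell_*\ge 1/L>1/(4L)\ge\overline{\tau}$. The two requirements on $\tau_*$ are therefore incompatible, and no admissible $\tau_*$ exists for your shortcut. The paper does not go through $\tau_*$-stationarity at all for this step: it argues directly that $x^*$ is a strict local minimizer by splitting $\mathcal H_*={\rm supp}(x^*)$ into $\mathcal F_*=\Theta_\infty\cap\mathcal H_*$ and $\mathcal G_*=\Gamma_\infty\cap\mathcal H_*$, using $\nabla_{\mathcal F_*}f(x^*)=0$ together with \eqref{eq-2-6-2} on $\mathcal G_*$ and the local strong convexity term $(\ell_*/2)\|x-x^*\|^2$ to show $f(x)+\lambda p(x)>f(x^*)+\lambda p(x^*)$ on a punctured neighborhood. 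You will need an argument of this type (essentially a strict-inequality version of Proposition~(ii) under local strong convexity), not the $\tau>1/\ell_*$ clause of Proposition~(iii).

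A smaller point: in part (ii) you cite Theorem \ref{the-2-1}(i) for $\Theta_k\equiv\Theta_\infty$, $\Gamma_k\equiv\Gamma_\infty$, but as stated that result is only subsequential. You need full-sequence convergence $x^k\to x^*$ from part (i) first, and then re-run the continuity/strict-inequality argument of Theorem \ref{the-2-1}(i) along the whole sequence; the paper does exactly this.
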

\begin{proof}
(i) Denote $\mathcal{H}_{*}:=\text{supp} (x^{*} )$, $\mathcal{F}_* := \Theta_{\infty} \cap \mathcal{H}_{*}$, $\mathcal{G}_* := \Gamma_{\infty} \cap \mathcal{H}_{*}$, it holds that $ \mathcal{F}_{*}\cup \mathcal{G}_{*} = \mathcal{H}_{*} \cap \mathcal{I}_{\infty} =\mathcal{H}_{*} $ by \eqref{the-2-1-1}. Theorem \ref{the-2-1} shows that \begin{align}\label{eq-the7-1}
\nabla_{\mathcal{F}_{*}} f(x^{*} )=0,\ x^*_{\mathcal{G}_{*}} = \left[\Pi_{\Omega}(x^* - \tau \nabla f(x^*))\right]_{\mathcal{G}_{*}},\ \mbox{and}\ x^{*} \neq 0.
\end{align}
Consider a local region $N(x^{*} ):=\left\{x \in \Omega: \|x-x^{*} \|<\epsilon_{*}\right\}$, where
\begin{align}\label{epsilon7}
\epsilon_{*}:=\min \left\{\lambda /\left(2 \|\nabla_{\overline{\mathcal{H}}_{*}} f(x^{*} ) \|\right), \min _{i \in \mathcal{H}_{*}} |x_{i}^{*} |\right\}.
\end{align}
For any $x (\neq x^{*} ) \in N (x^{*} )$, we have $\mathcal{H}_{*} \subseteq \text{supp}(x)$. In fact if there is a $j$ such that $j \in \mathcal{H}_{*}$ but $j \notin \text{supp}(x)$, then we derive a contradiction:
$$
\epsilon_{*} \leq \min_{i \in \mathcal{H}_{*}} |x_{i}^{*} | \leq |x_{j}^{*} |= |x_{j}^{*}-x_{j} | \leq \|x-x^{*} \|_{2} < \epsilon_{*} .
$$
By $f$ is locally strongly convex with $\ell_{*}>0$ around $x^{*}$ which is defined in \eqref{eq-2-5}, for any $x\left(\neq x^{*}\right) \in N\left(x^{*}\right)$ together with \eqref{eq-the7-1}, it is true that
\begin{align*}
&\quad\  f(x)+\lambda p(x)-f(x^{*} )-\lambda p(x^*) \\
&\ge \left\langle \nabla f(x^*), x - x^* \right\rangle +  (\ell_{*} / 2 ) \| x- x^{*} \|^{2}+\lambda p(x)-\lambda p(x^*) \nonumber\\
&=\left\langle \nabla_{\mathcal{F}_*}f(x^*), (x - x^*)_{\mathcal{F}_*} \right\rangle + \left\langle \nabla_{\mathcal{G}_*}f(x^*), (x - x^*)_{\mathcal{G}_*} \right\rangle + \left\langle \nabla_{\overline{\mathcal{H}}_*} f(x^*),(x - x^*)_{\overline{\mathcal{H}}_*} \right\rangle \nonumber \\
&\quad  +  (\ell_{*} / 2 ) \| x- x^{*} \|^{2}+\lambda p(x)-\lambda p(x^*) \nonumber\\
&= \left\langle \nabla_{\mathcal{G}_*}f(x^*), (x - x^*)_{\mathcal{G}_*} \right\rangle + \left\langle \nabla_{\overline{\mathcal{H}}_*} f(x^*),(x - x^*)_{\overline{\mathcal{H}}_*} \right\rangle \nonumber \\
&\quad +  (\ell_{*} / 2 ) \| x- x^{*} \|^{2}+\lambda p(x)-\lambda p(x^*)
\end{align*}
By \eqref{eq-the7-1} and \eqref{eq-2-6-2}, there is $\left\langle \nabla_{\mathcal{G}_*} f(x^*),(x - x^*)_{\mathcal{G}_*} \right\rangle \ge 0$. Hence, it holds that
\begin{align*}
&\quad \  f(x)+\lambda p(x)-f(x^{*} )-\lambda p(x^*) \\
&\ge \left\langle \nabla_{\overline{\mathcal{H}}_*} f(x^*),(x - x^*)_{\overline{\mathcal{H}}_*} \right\rangle +  (\ell_{*} / 2) \| x- x^{*} \|^{2}+\lambda p(x)-\lambda p(x^*) =: \phi.
\end{align*}
Clearly, if $\mathcal{H}_{*}=\text{supp}(x)$, then $x_{\overline{\mathcal{H}}_{*}}=0, \|x\|_0 =\|x^{*}\|_{0}$ and $p(x) = p(x^*)$  hence $\phi= (\ell_{*} / 2 ) \|x-x^{*} \|^{2}>0$. If $\mathcal{H}_{*} \subset \text{supp}(x)$, then $p(x) \geq p(x^*)+1$ together with $\|\nabla_{\overline{\mathcal{H}}_{*}} f (x^{*} ) \| \|x-x^{*} \| \le \|\nabla_{\overline{\mathcal{H}}_{*}} f (x^{*} ) \|\epsilon_* $ from $x^* \in N(x^*)$, we obtain that
\begin{align*}
\phi & \geq-  \|\nabla_{\overline{\mathcal{H}}_{*}} f (x^{*} ) \| \|(x - x^*)_{\overline{\mathcal{H}}_{*}} \|+\left(\ell_{*} / 2\right) \|x-x^{*} \|^{2}+ \lambda \\
&\ge  - \|\nabla_{\overline{\mathcal{H}}_{*}} f (x^{*} ) \| \|x-x^{*} \|+\left(\ell_{*} / 2\right) \|x-x^{*} \|^{2}+\lambda \\
&\ge -\|\nabla_{\overline{\mathcal{H}}_{*}} f (x^{*} ) \|\epsilon_* +\left(\ell_{*} / 2\right) \|x- x^{*} \|^{2}+\lambda>0.
\end{align*}
By \eqref{epsilon7}, it holds that
$\|\nabla_{\overline{\mathcal{H}}_{*}} f (x^{*} ) \|\epsilon_* \le \lambda /2.$ Hence, we obtain that
\begin{align*}
\phi \ge -\lambda /2 +\left(\ell_{*} / 2\right) \|x- x^{*} \|^{2}+\lambda>0.
\end{align*}

Both cases show that $x^{*}$ is a strictly local minimizer of \eqref{PBL0} and is unique in $N (x^{*} )$, namely, $x^{*}$ is isolated local minimizer in $N (x^{*} )$. Therefore, the whole sequence tends to $x^{*}$ by Theorem \ref{the-2-1}.

(ii) Now, we ready to prove that condition \eqref{newtonfun} always holds for sufficiently large $k$, that is Algorithm \ref{algorithm-NL0R} will ultimately utilize the Newton step. From now on, we denote $x^{k+1} = \tilde{x}^k$ which is defined in \eqref{eq-tildexk}. First, we verify that $H^{k}$ is nonsingular when $k$ is sufficiently large and for $d^k$ defined by \eqref{direction}, it satisfies
$$
\left\langle g_{\mathcal{I}_{k}}^{k}, d_{\mathcal{I}_{k}}^{k} \right\rangle \leq -\delta \|d^k\|^2 + \|x^k_{\overline{\mathcal{I}}_{k}}\|^2/(4\tau)  .
$$
Since $f$ is strongly smooth with $L$ and locally strongly convex with $\ell_{*}$ around $x^{*}$ and by the properties of the block upper triangular structure of matrix
$H^k$, it holds that
\begin{align}\label{the-2-2-2}
\begin{cases}
&\ell_{*}  \leq  \lambda_{i}\left( \nabla^2_{\Theta_k, \Theta_k} f(x^k) \right) \leq L ,\\
&\tilde{\ell_{*}} := \min (1, \ell_{*}) \leq  \lambda_{i}\left(H^{k}\right) \leq \tilde{L} := \max (1, L),
\end{cases}
\end{align}
where $\lambda_{i}(A)$ is the $i$ th largest eigenvalue of $A$. By \eqref{eq-3-1}, \eqref{the-2-2-2} and \eqref{2-1}, we obtain that
\begin{align*}
\left\langle d^k_{\mathcal{I}_k}, g^k_{\mathcal{I}_k} \right\rangle & \le  - \left\langle d_{\mathcal{I}_k \cup J_k}^k , H^k_{\mathcal{I}_k\cup J_k} d_{\mathcal{I}_k \cup J_k}^k \right\rangle + (1 - \frac{1}{\tau}) \|d_{\Gamma_k}^k\|^2 + \|d_{J_k}^k\|^2 \\
&\le -\tilde{\ell_{*}} \|d_{\mathcal{I}_k \cup J_k}^k\|^{2} + (1 - \frac{1}{\tau}) \|d_{\Gamma_k}^k\|^2 + \|d_{J_k}^k\|^2  \\
&\le -\tilde{\ell_{*}} \| d_{\mathcal{I}_k \cup J_k}^k \|^{2}  + \|d_{J_k}^k\|^2  \quad \text{by} \  (0 < \tau < \overline{\tau} <1)\\
&= - \tilde{\ell_{*}} \|d^{k} \|^{2} + \|x_{\overline{\mathcal{I}}_k}^k\|^2\\
&\le - \delta \|d^{k} \|^{2}+ \|x_{\overline{\mathcal{I}}_{k}}^{k} \|^{2} /(2 \tau) ,
\end{align*}
where the last inequality is due to $\delta \leq \tilde{\ell_{*}}$ and $\tau <\overline{\tau} \le 1/n \le 1/2 $. From Theorem \ref{the-2-1} we know that $\text{supp} (x^{k+1}) = \mathcal{I}_k$ and $\mathcal{I}_k \equiv \mathcal{I}_{\infty}$ holds for sufficiently large $k$, then it holds that
$$|\mathcal{I}_k| \le \|x^{k}\|_0, \ \mathcal{I}_k = \mathcal{I}_{k-1} .$$
Now we need to prove $x^k + d^k \in \Omega$ for sufficiently large $k$, where $d^k$ from \eqref{direction}. By \eqref{direction}, there is
\begin{align*}
-l_{\Gamma_k} \le x^{k+1}_{\Gamma_k} \le u_{\Gamma_k},\ -l_{\overline{\mathcal{I}}_k} \le x^{k+1}_{\overline{\mathcal{I}}_k} \le u_{\overline{\mathcal{I}}_k}.
\end{align*}
Hence, we only need to prove $-l_{\Theta_k} \le x^k_{\Theta_k} + d^k_{\Theta_k} \le u_{\Theta_k}$. By \eqref{lem-2-3-1}, we know that $g^k_{\Theta_k} \to 0$, $d_{\Gamma_k}  \to 0$ and $x_{\overline{\mathcal{I}}_k}^k  \to 0$. Hence, there is $d_{\Theta_k}^k \to 0$ and also $d^k\to 0$ by \eqref{direction}. From claim (i), there is $x^k \to x^*$ together with Theorem \ref{the-2-1}, we obtain that $\Theta_{k} \equiv \Theta_{\infty}$ holds for sufficiently large $k$ and $\Theta_{*} \subseteq \Theta_{\infty}$. It is obvious that for any $i\in \Theta_{*}$ there is $-l_i < x^*_i = x^*_i - \tau g^*_i <u_i$. For simplicity, we only consider the case of $x^*_i < u_i$. Denote
\begin{align}\label{eq-xiepsilon}
\xi = u_i - x^*_i >0\ \mbox{and}\ \epsilon^k_i = x^k_i - x^*_i.
\end{align}
By $x^k\to x^*$ there is $\epsilon^k_i \to 0$, together with $\|d^k\|\to 0$ we obtain that $ \epsilon^k_i + d^k_i \le \xi $ for sufficiently large $k$. For $i\in  \Theta_{*} \subseteq \Theta_{k}$ and sufficiently large $k$, by \eqref{eq-xiepsilon}, it holds that
$$ x^k_i + d^k_i = x^*_i + \epsilon^k_i +d^k_i {\le} x^*_i + \xi \le u_i .$$
Similarly, we can prove that $-l_i \le x^k_i + d^k_i$ holds for sufficiently large $k$. Hence, we obtain that $-l_{\Theta_k} \le x^k_{\Theta_k} + d^k_{\Theta_k} \le u_{\Theta_k}$, thus there is $x^k + d^k \in \Omega$. This proves that $d^{k}$ from \eqref{direction} is always admitted for sufficiently large $k$.

(iii) By Theorem \ref{the-2-1}, for sufficiently large $k$, we have \eqref{eq-Gammainfty}, namely
\begin{align}\label{the-2-2-3}
g_{\Theta_{k}}^* = 0, \quad x_{\Gamma^u_k}^*  = u_{\Gamma^u_k},\quad x_{\Gamma^l_k}^*  = -l_{\Gamma^l_k},\quad x_{\overline{\mathcal{I}}_{k}}^* = 0.
\end{align}
For any $0 \leq t \leq 1$, by letting $x(t):=x^{*}+t\left(x^{k}-x^{*}\right)$. the Hessian of $f$ being locally Lipschitz continuous at $x^{*}$ derives
\begin{align}\label{the-2-2-4}
\left\|\nabla_{\mathcal{I}_{k}:}^{2} f\left(x^{k}\right)-\nabla_{\mathcal{I}_{k}:}^{2} f(x(t))\right\|_{2} \leq M_{*}\left\|x^{k}-x(t)\right\|=(1-t) M_{*}\left\|x^{k}-x^{*}\right\| .
\end{align}
Moreover, by Taylor expansion, we obtain
\begin{align}\label{the-2-2-5}
\nabla f(x^k) - \nabla f(x^* ) =
\int_{0}^{1} \nabla^2 f(x(t) )(x^k - x^*)dt .
\end{align}
By \eqref{support} and \eqref{the-2-2-3}, there is $\| x^{k+1}_{\overline{\mathcal{I}}_k} - x^*_{\overline{\mathcal{I}}_k} \|=0$. Together with \eqref{2.4}, it holds that
\begin{align}\label{eq-xk1alpha}
\| x^{k+1} - x^* \|^2 &= \| x^{k+1}_{\mathcal{I}_k} - x^*_{\mathcal{I}_k} \|^2 + \| x^{k+1}_{\overline{\mathcal{I}}_k} - x^*_{\overline{\mathcal{I}}_k} \|^2 \nonumber \\
&= \| x^{k}_{\Theta_k} - x^*_{\Theta_k} + \alpha_k d_{\Theta_k}^k \|^2 +  \| x^{k}_{\Gamma_k} - x^*_{\Gamma_k} + d_{\Gamma_k}^k \|^2 .
\end{align}
By the definition of $\Gamma^u_k$ and $\Gamma^l_k$ in \eqref{eq-index} together with \eqref{eq-bound} and \eqref{the-2-2-3}. There is
\begin{align}\label{eq-gammaulcon}
\| u^{k}_{\Gamma^u_k} - x^*_{\Gamma^u_k}\|= 0\ \mbox{and}\ \| l^{k}_{\Gamma^l_k} - x^*_{\Gamma^l_k} \|=0.
\end{align}
Combine with \eqref{eq-xk1alpha} and \eqref{eq-gammaulcon}, we obtain that
\begin{align}\label{eq-gammakstar}
\| x^{k}_{\Gamma_k} - x^*_{\Gamma_k} + d_{\Gamma_k}^k \|^2 &\le \| x^{k}_{\Gamma^u_k} - x^*_{\Gamma^u_k} + d_{\Gamma^u_k}^k \|^2 +  \| x^{k}_{\Gamma^l_k} - x^*_{\Gamma^l_k} + d_{\Gamma^l_k}^k \|^2 \nonumber\\
&= \| u^{k}_{\Gamma^u_k} - x^*_{\Gamma^u_k}  \|^2 +  \| l^{k}_{\Gamma^l_k} - x^*_{\Gamma^l_k} \|^2 {=} 0.
\end{align}
Combine with \eqref{eq-xk1alpha} and \eqref{eq-gammakstar}, we have the following chain of inequalities
\begin{align}\label{eq-3.37}
\| x^{k+1} - x^* \|^2 &\ \, = \ \, \| x^{k}_{\Theta_k} - x^*_{\Theta_k} + \alpha_k d_{\Theta_k}^k \|^2 \nonumber \\
&\ \, = \ \, \|(1 - \alpha_k)(x^{k}_{\Theta_k} - x^*_{\Theta_k}) + \alpha_k( x^{k}_{\Theta_k} - x^*_{\Theta_k} +  d_{\Theta_k}^k ) \|^2 \nonumber \\
&\ \, \le \ \, (1 - \alpha_k)\| x^{k}_{\Theta_k} - x^*_{\Theta_k} \|^2 + \alpha_k \| x^{k}_{\Theta_k} - x^*_{\Theta_k} +  d_{\Theta_k}^k \|^2 .
\end{align}
where the last inequality is due to $\| \cdot \|^2$ being a convex function.
Combine with \eqref{eq-3.37} and \eqref{lem-2-2-4}, we obtain that
\begin{align}\label{eq-3.38}
\| x^{k+1} - x^* \|^2 \le (1 - \beta \overline{\alpha})\| x^{k} - x^* \|^2 + \overline{\alpha} \| x^{k}_{\Theta_k} - x^*_{\Theta_k} +  d_{\Theta_k}^k \|^2.
\end{align}
From claim (ii), $d^k$ will be updated eventually by \eqref{direction}. Hence, by \eqref{direction} and \eqref{the-2-2-2}, for sufficiently large $k$, it holds that
\begin{align}\label{eq-lxk-1}
&\quad \ {\ell_*}  \|x_{ \Theta_k}^{k}-x_{ \Theta_k}^{*}+d_{ \Theta_k}^{k} \| \nonumber\\
&=\ell_*  \| \nabla_{\Theta_k,\Theta_k}^2 f(x^k)^{-1}[-\nabla_{\Theta_k,\Gamma_k}^{2}f(x^k)d_{\Gamma_k}^k +\nabla_{\Theta_k,\overline{\mathcal{I}}_k}^{2}f(x^k)x_{\overline{\mathcal{I}}_k}^k - g_{\Theta_k}^k] + x_{\Theta_k}^k - x_{\Theta_k}^*  \| \nonumber\\
&\le \|-\nabla_{\Theta_k,\Gamma_k}^{2}f(x^k)d_{\Gamma_k}^k +\nabla_{\Theta_k,\overline{\mathcal{I}}_k}^{2}f(x^k)x_{\overline{\mathcal{I}}_k}^k - g_{\Theta_k}^k + \nabla_{\Theta_k}^2f(x^k)(x_{\Theta_k}^k - x_{\Theta_k}^*) \| .
\end{align}
Denote $z^k = x^k - \tau \nabla f(x^k)$, by $g_{\Theta_{k}}^* = 0$ and $x_{\overline{\mathcal{I}}_{k}}^* = 0$ from  \eqref{the-2-2-3}, it holds that
\begin{align}\label{eq-lxk-2}
&\quad \, -\nabla_{\Theta_k,\Gamma_k}^{2}f(x^k)d_{\Gamma_k}^k +\nabla_{\Theta_k,\overline{\mathcal{I}}_k}^{2}f(x^k)x_{\overline{\mathcal{I}}_k}^k - g_{\Theta_k}^k + \nabla_{\Theta_k, \Theta_k}^2f(x^k)(x_{\Theta_k}^k - x_{\Theta_k}^*)  \nonumber\\
&= -\nabla_{\Theta_k,\Gamma_k}^{2}f(x^k)\left(\left[\Pi_{\Omega}(z^k)\right]_{\Gamma_k} - x_{\Gamma_k}^k \right) +\nabla_{\Theta_k,\overline{\mathcal{I}}_k}^{2}f(x^k)x_{\overline{\mathcal{I}}_k}^k - g_{\Theta_k}^k \nonumber\\
&\quad + \nabla_{\Theta_k, \Theta_k}^2f(x^k)(x_{\Theta_k}^k - x_{\Theta_k}^*)  \nonumber\\
&= \nabla_{\Theta_k :}^2f(x^k)x^k - \nabla_{\Theta_k, \Theta_k}^2f(x^k) x_{\Theta_k}^* - g_{\Theta_k}^k -\nabla_{\Theta_k,\Gamma_k}^{2}f(x^k)\left[\Pi_{\Omega}(z^k)\right]_{\Gamma_k}\nonumber\\
&= \nabla_{\Theta_k :}^2f(x^k)x^k - \nabla_{\Theta_k :}^2f(x^k) x^* - g_{\Theta_k}^k -\nabla_{\Theta_k,\Gamma_k}^{2}f(x^k)\left[\Pi_{\Omega}(z^k)\right]_{\Gamma_k} \nonumber\\
&\quad +\nabla_{\Theta_k, \Gamma_k}^2f(x^k) x^*_{\Gamma_k} + \nabla_{\Theta_k, \overline{\mathcal{I}}_k}^2f(x^k) x^*_{\overline{\mathcal{I}}_k} \nonumber\\
&= \nabla_{\Theta_k :}^2f(x^k)x^k - \nabla_{\Theta_k :}^2f(x^k) x^* - g_{\Theta_k}^k + g_{\Theta_k}^* \ (\mbox{by}\ \eqref{the-2-2-3})\nonumber\\
&\quad -\nabla_{\Theta_k,\Gamma_k}^{2}f(x^k)\left[\Pi_{\Omega}(z^k)\right]_{\Gamma_k} +\nabla_{\Theta_k, \Gamma_k}^2f(x^k) x^*_{\Gamma_k}
.
\end{align}
Combine with \eqref{eq-lxk-1} and \eqref{eq-lxk-2}, we obtain that
\begin{align}\label{eq-xdtheta}
&\quad\ {\ell_*}  \|x_{ \Theta_k}^{k}-x_{ \Theta_k}^{*}+d_{ \Theta_k}^{k} \| \nonumber\\
&\le \| \nabla_{\Theta_k :}^{2}f(x^k)x^k - g_{\Theta_k}^k  - \nabla_{\Theta_k :}^{2}f(x^k)x^* + g_{\Theta_k}^* \| \nonumber\\
&\quad  +  \left\| \nabla_{\Theta_k, \Gamma_k}^2f(x^k) \right\| \left\|\left[\Pi_{\Omega}(z^k)\right]_{\Gamma_k} -  x^*_{\Gamma_k}  \right\|.
\end{align}
By \eqref{eq-gammaulcon}, we obtain that $\left\| \left[\Pi_{\Omega}(z^k) \right]_{\Gamma_k} -  x^*_{\Gamma_k} \right\| = 0$. Therefore, by \eqref{eq-xdtheta} and \eqref{the-2-2-5}, it holds that
\begin{align*}
&\quad\  {\ell_*} \left\|x_{ \Theta_k}^{k}-x_{ \Theta_k}^{*}+d_{ \Theta_k}^{k}\right\| \\
&\le \left\| \nabla_{\Theta_k :}^{2}f(x^k)x^k - g_{\Theta_k}^k  - \nabla_{\Theta_k :}^{2}f(x^k)x^* + g_{\Theta_k}^* \right\| \\
&\le \left\|  \int_{0}^{1}[ \nabla_{\Theta_k :}^2 f(x^k) - \nabla_{\Theta_k :}^2 f(x(t))] (x^k - x^*) dt \right\| \\
&\le \int_{0}^{1}\|  \nabla_{\Theta_k,:}^2 f(x^k) - \nabla_{\Theta_k,:}^2 f(x(t))\| \|(x^k - x^*) \|dt \\
&\le M_{*}\| x^k - x^*\|^2 \int_{0}^{1}(1-t)dt \\
&\le 0.5 M_{*}\| x^k - x^*\|^2 .
\end{align*}
It follows from $d_{\overline{\mathcal{I}}_k}^k = - x_{\overline{\mathcal{I}}_k}^k$ , $\left\|\left[\Pi_{\Omega}(z^k)\right]_{\Gamma_k} -  x^*_{\Gamma_k} \right\| = 0$ and \eqref{the-2-2-3}, then there is $\|x^k + d^k - x^*\| = \|x_{ \Theta_k}^{k}-x_{ \Theta_k}^{*}+d_{ \Theta_k}^{k}\|$, leading to the following fact
\begin{align}\label{eq-3.41}
\frac{\|x^k + d^k - x^*\|}{\|x^k - x^*\|} = \frac{\|x_{ \Theta_k}^{k}-x_{ \Theta_k}^{*}+d_{ \Theta_k}^{k}\|}{\|x^k - x^*\|} \le \frac{M_* \|x^k - x^*\|^2}{2 {\ell_*} \|x^k - x^*\|} \le \frac{M_*}{2\ell_*}\|x^k - x^*\|.
\end{align}
Now, we have three facts: \eqref{eq-3.41}, $ x^k \to x^* $ from (1), and $ \left\langle \nabla f(x^k), d^k \right\rangle \le \rho \|d^k \|^2 $ from Lemma \ref{lem-2-1}, which together with \cite[Theorem 3.3]{facchinei1995minimization} allow us to claim that eventually the stepsize $ \alpha_k $ determined by the Armijo rule is 1, namely $ \alpha_k = 1 $. Then, the sequence converges quadratically, completing the proof.
\end{proof}

\section{Numerical results}
In this section, we report the numerical results of the novel subspace Newton's method (BNL0R) proposed in this paper. To demonstrate the advantages of our algorithm, we conducted a comparative analysis with projection gradient algorithm (PGA \cite{zhang2017projected}), proximal iterative hard thresholding methods (PIHT \cite{lu2014iterative}). The code of these methods is implemented in MATLAB R2023a and computed on a laptop with 12th Gen Intel(R) Core(TM) i7-12800HX 2.00 GHz CPU and 128 GB memory.

PGA is an algorithm for solving a relaxation problem of \eqref{BL0} where $\ell_0$ norm is replaced by $\ell_1$ norm. We implement the proximal gradient algorithm (PGA) which used the same line search as \cite{CHENG2023179}.

PIHT is a method for $\ell_0$-regularized convex cone programming. We implement this algorithm and take the same parameters setting as those in \cite{lu2014iterative}.

We set parameters in BL0R as the same as those in NL0R \cite{zhou2021newton} except for termination conditions. In order to make the most of the Newton step, we replace the condition \eqref{zerodecent} to \eqref{zerodecent2}. For these three methods, we set the halting conditions as the following
\begin{align}\label{eq-stop}
\frac{\|x^k - x^{k-1}\|}{\max(1,\|x^k\|) } \le 10^{-6},\ f(x^k) \le \epsilon,\ {\rm{iter}} > 2000.
\end{align}
The setting of $\epsilon$ is described in each respective example. For PGA, PIHT and BNL0R, the adjustment of the penalty parameter $\lambda$ is implemented using the same strategy outlined in \cite{zhou2021newton} . For all experiments, we set start point $x^0=[0,\cdots,0]^{\top}$.

The testing problems are from Compressed sensing which taken as the following form.
\begin{align*}
\min &\frac{1}{2}\|Ax - b\|^2 + \lambda \|x\|_0, \nonumber \\
\mbox{s.t.} & -l \le x \le u,
\end{align*}
where $A\in \mathbb{R}^{m\times n}$ is a data matrix, $b\in \mathbb{R}^m$ is an observation vector and $l, u\in \mathbb{R}_{+}^n$ is a boundary vector. Set $f(x) = \|Ax - b\|^2/2$. We will report the following results: dimension $n$, number of iterations, computation time (in seconds), and res. Here, res = $\|x^k - x^*\|$, where $x^*$ is the groundtruth solution.

\subsection{Noise-free signal recovery }
\textbf{E1} We consider the exact recovery $b = Ax^*$. The matrix $A$ is from \cite{zhang2018acceleratedproximaliterativehard}. We set the number of non-zero components $s = 0.001n$. $A \in \mathbb{R}^{m\times n}$ are random Gaussian matrix and the columns of $A$ are normalized to have $\ell_2$ norm of 1. $x^*$ is a random vector that follows a normal distribution with values between 0.1 and 3, which by the following codes.
\begin{align*}
x^* = {\rm{zeros}}(n,1),\ w = {\rm{randperm}}(n),\ x^*(w(1:s)) = 0.1 + (3-0.1){\rm{rand}}(s,1).
\end{align*}
In this case, we set the boundary vector $l=u = 3\times {\rm{ones}}(n,1)$ . We test different dimensions from $n = 5000$ to $n = 30000$, and different row numbers with $m =0.25n$ and $m = 0.15n$.  We run 20 trials for each example and take the average of the results and round the number of iterations iter to the nearest integer. In particular, for BNL0R, in one of the 20 trials, we examined the variation of the number of non-zero components and the objective function value during the iteration process. For \textbf{E1}, we set $\epsilon = 10^{-20}$ in \eqref{eq-stop}.

\begin{figure}[htbp]
	\centering
	\includegraphics[width=6cm]{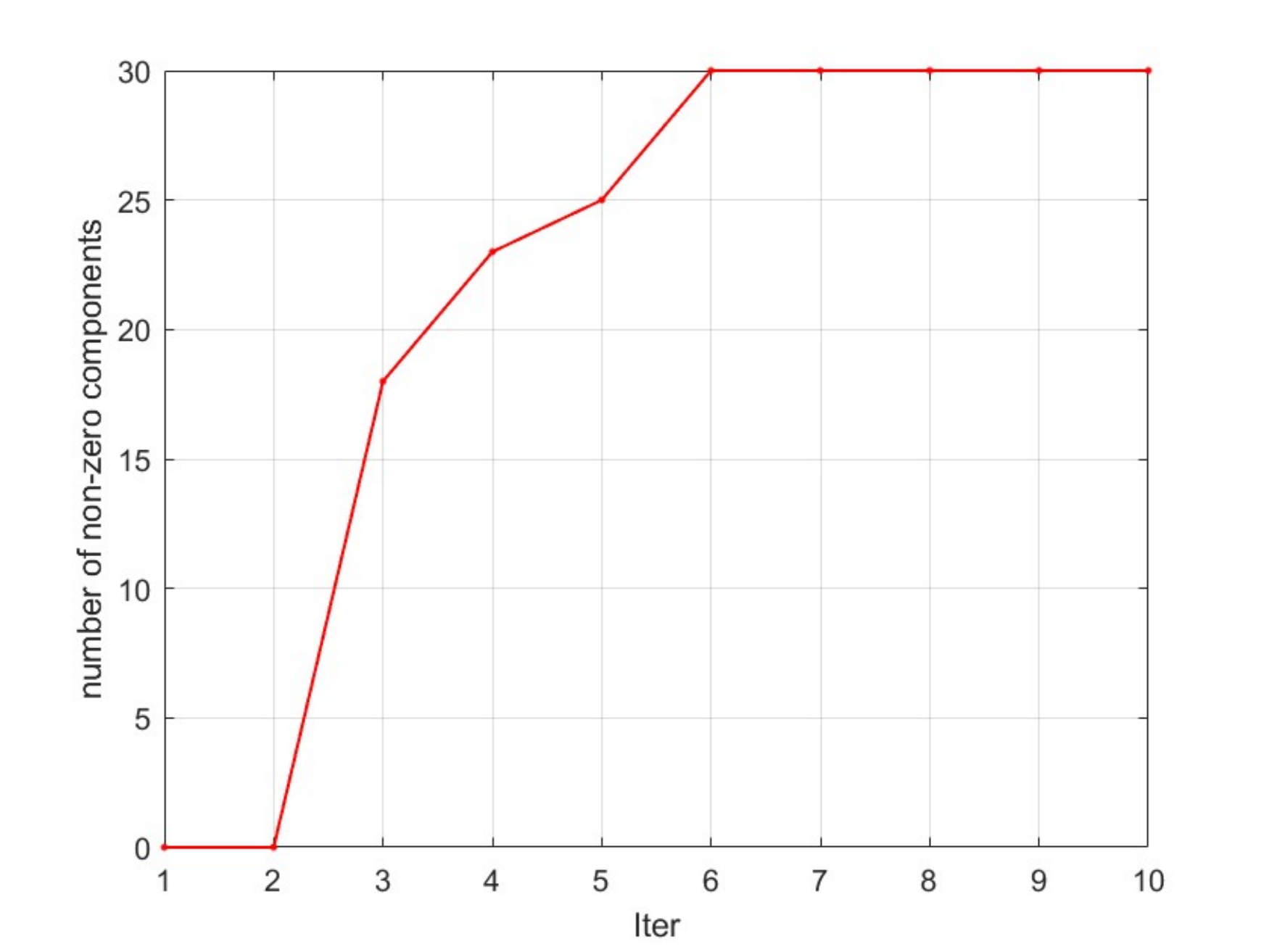
}
    \includegraphics[width=6cm]{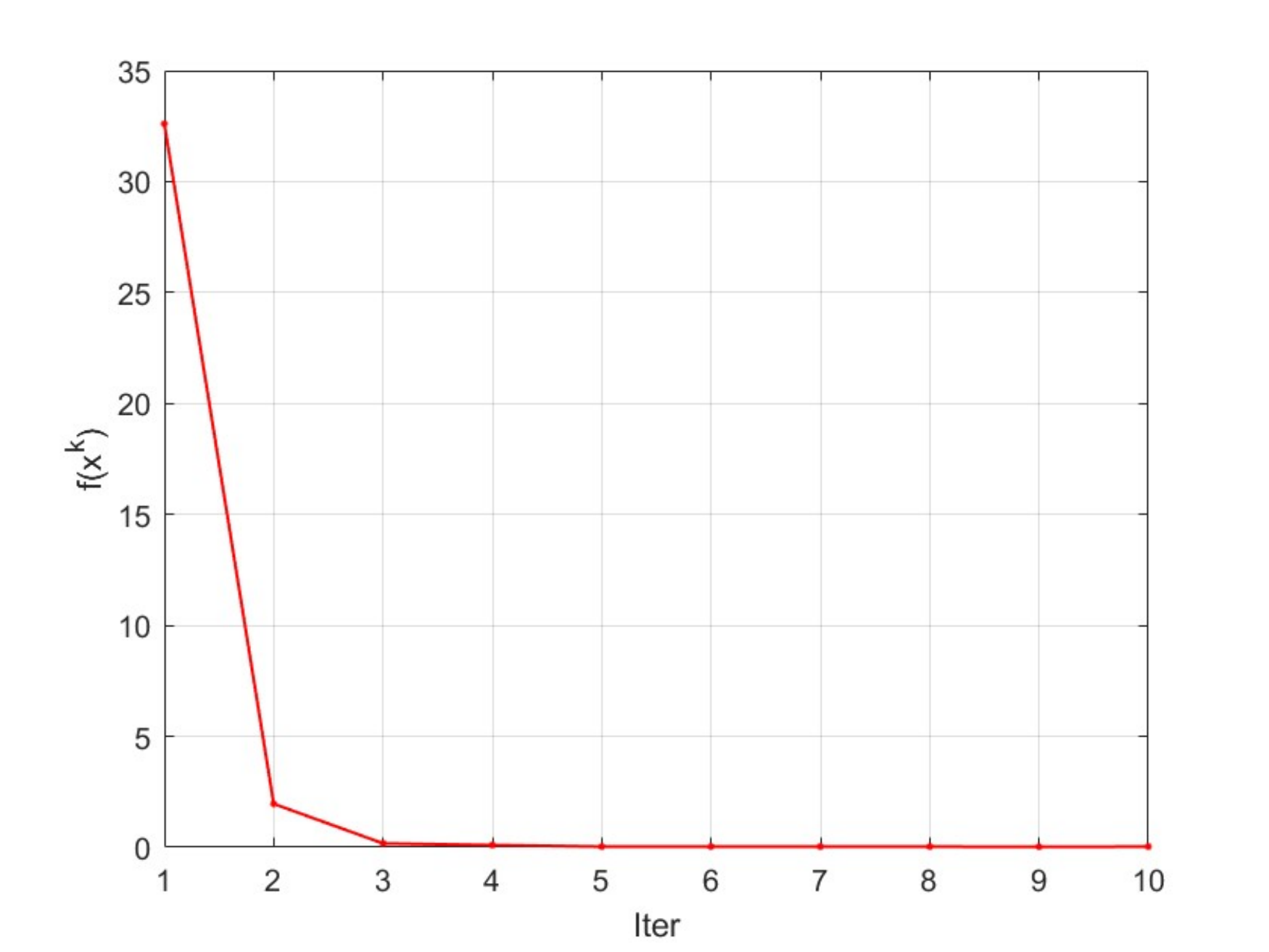
}
	\caption{The variation of the number of non-zero components and the objective function value during the iteration process of BNL0R for \textbf{E1} with $m = 0.25n$}
	\label{fig0}
\end{figure}
In Figures \ref{fig0}, we observe that the number of non-zero components of the iteration sequence $\{x^k\}$ is not very large, so the subspace Newton's method only needs to calculate a relatively small Hessian matrix. Which reduces the CPU time of Newton's method.
\begin{table*}[htbp]
\scriptsize
\centering
\caption{\text{Results on noise-free signal recovery with $m = 0.25n$}}\label{Tab01}
%\begin{tabular}{ccccccc}
\begin{tabular*}{\textwidth}{@{\extracolsep\fill}ccccrcccr}%ccccccccc
\toprule
 %\multicolumn{7}{c}{\text{ExamplE1}}  \\
\textbf{Algorithm}   &{n} &  iter     &  time   &   res
&{n}  &  iter      &  time   &   res \\
 \midrule
\text{PGA} & 5000  & 10  & 0.09  & 9.52e-02 & 10000  & 10  & 0.34  & 1.41e-01 \\
\text{PIHT} & 5000  & 10  & 0.02  & 1.68e-10 & 10000  & 10  & 0.11  & 1.89e-09 \\
\text{BNL0R} & 5000  & 04  & 0.01  & 8.12e-17 & 10000  & 04  & 0.04  & 3.65e-17 \\
 \midrule
\text{PGA} & 15000  & 10  & 0.83  & 1.20e-01 & 20000  & 10  & 1.62  & 1.24e-01 \\
\text{PIHT} & 15000  & 10  & 0.26  & 3.07e-09 & 20000  & 10  & 0.59  & 4.54e-09 \\
\text{BNL0R} & 15000  & 05  & 0.08  & 2.32e-17 & 20000  & 05  & 0.19  & 1.94e-17 \\
 \midrule
\text{PGA} & 25000  & 10  & 2.58  & 8.06e-02 & 30000  & 10  & 3.90  & 6.66e-02 \\
\text{PIHT} & 25000  & 10  & 0.91  & 7.98e-09 & 30000  & 10  & 1.30  & 6.48e-09 \\
\text{BNL0R} & 25000  & 05  & 0.30  & 1.79e-17 & 30000  & 06  & 0.44  & 1.60e-17 \\
\bottomrule
\end{tabular*}
\end{table*}

\begin{table*}[htbp]
\scriptsize
\centering
\caption{\text{Results on noise-free signal recovery with $m = 0.15n$}}\label{Tab02}
%\begin{tabular}{ccccccc}
\begin{tabular*}{\textwidth}{@{\extracolsep\fill}ccccrcccr}
\toprule
 %\multicolumn{7}{c}{\text{ExamplE1}}  \\
\textbf{Algorithm}   &{n} &  iter     &  time   &   res
&{n}  &  iter      &  time   &   res \\
 \midrule
\text{PGA} & 5000  & 10  & 0.04  & 1.75e-01 & 10000  & 10  & 0.18  & 1.30e-01 \\
\text{PIHT} & 5000  & 10  & 0.01  & 9.84e-09 & 10000  & 11  & 0.07  & 1.32e-08 \\
\text{BNL0R} & 5000  & 04  & 0.01  & 6.82e-17 & 10000  & 05  & 0.03  & 1.15e-17 \\
 \midrule
\text{PGA} & 15000  & 10  & 0.39  & 2.06e-01 & 20000  & 10  & 0.65  & 1.27e-01 \\
\text{PIHT} & 15000  & 10  & 0.15  & 1.51e-08 & 20000  & 10  & 0.25  & 1.49e-08 \\
\text{BNL0R} & 15000  & 05  & 0.05  & 9.21e-18 & 20000  & 05  & 0.09  & 1.77e-17 \\
 \midrule
\text{PGA} & 25000  & 10  & 1.49  & 5.46e-01 & 30000  & 10  & 2.08  & 2.78e-01 \\
\text{PIHT} & 25000  & 10  & 0.56  & 1.98e-08 & 30000  & 10  & 0.76  & 3.24e-08 \\
\text{BNL0R} & 25000  & 05  & 0.18  & 2.23e-17 & 30000  & 06  & 0.26  & 2.23e-17 \\
\bottomrule
\end{tabular*}
\end{table*}

\begin{figure}[htbp]
	\centering
	\includegraphics[width=6cm]{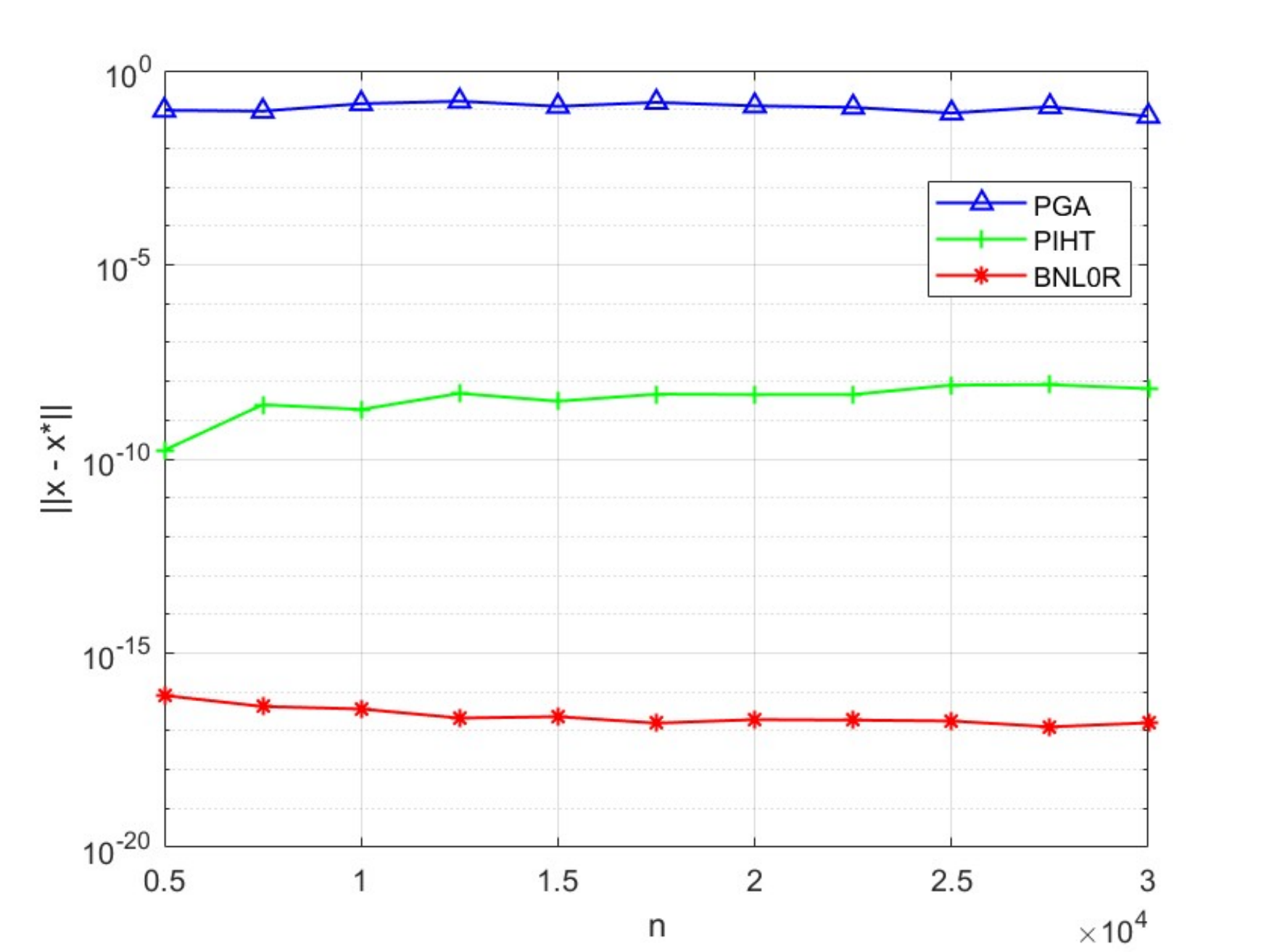
}
    \includegraphics[width=6cm]{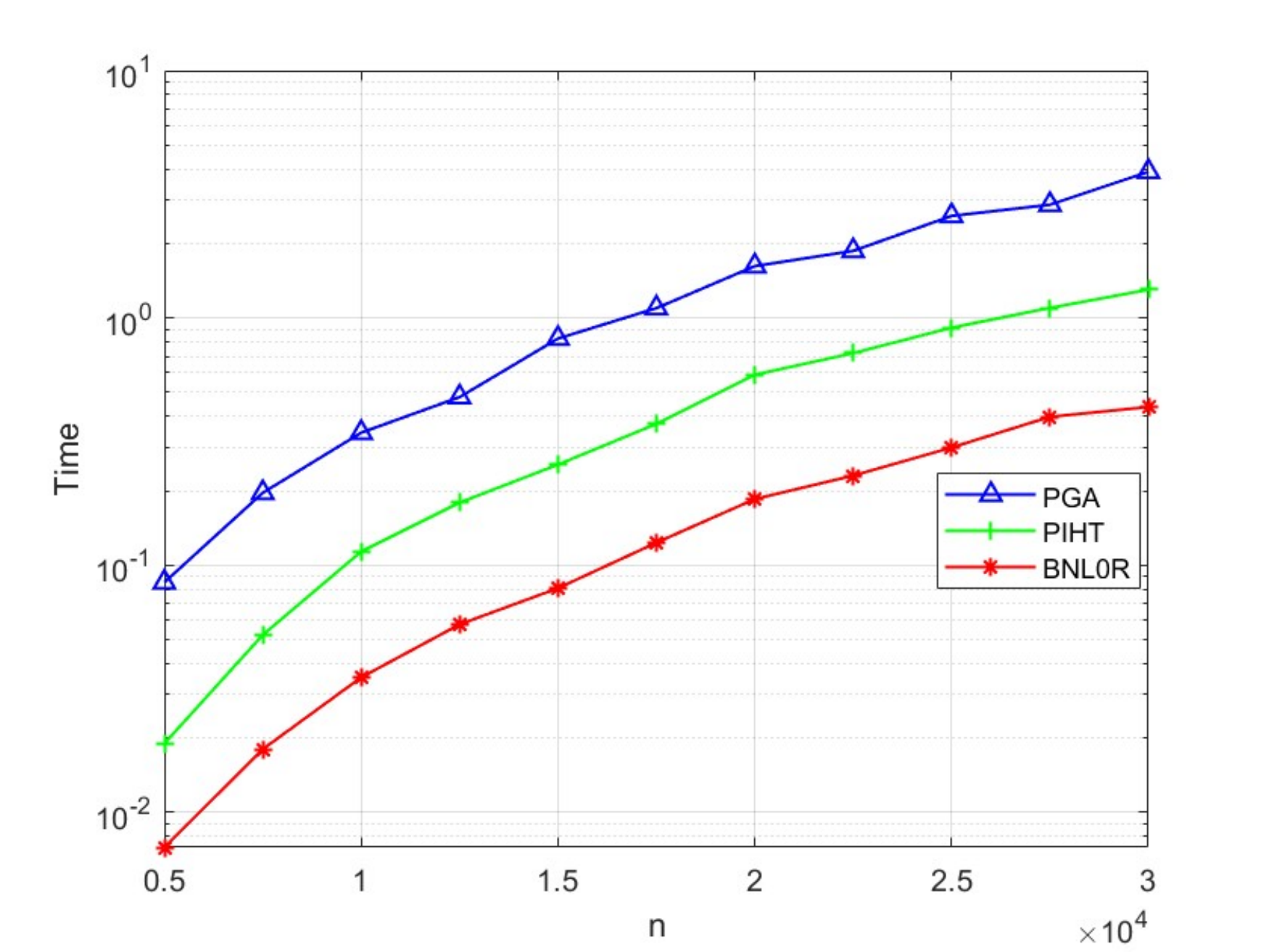
}
	\caption{Average recovery res and time for \textbf{E1} with $m = 0.25n$}
	\label{fig1}
\end{figure}

\begin{figure}[htbp]
	\centering
	\includegraphics[width=6cm]{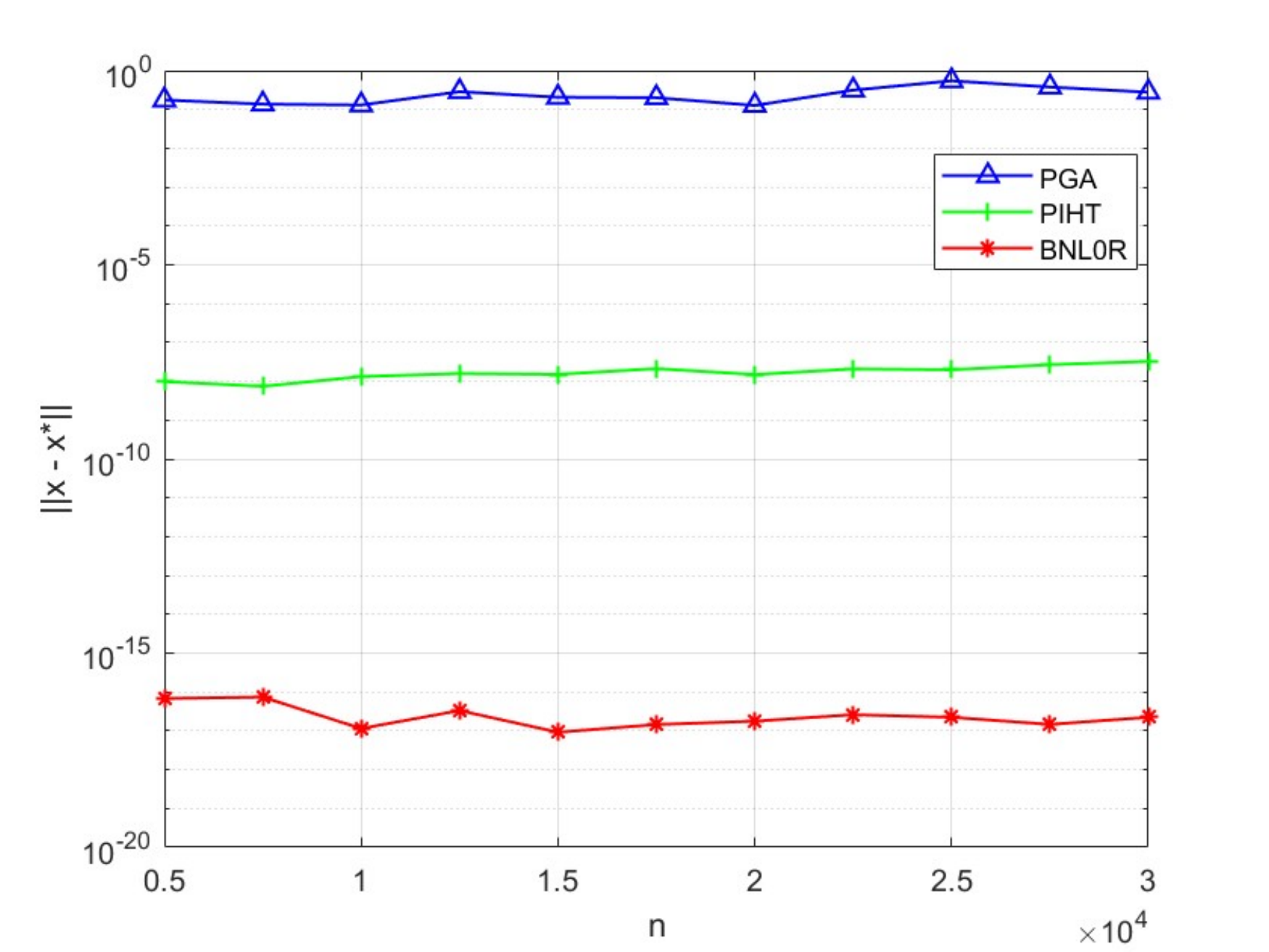
}
    \includegraphics[width=6cm]{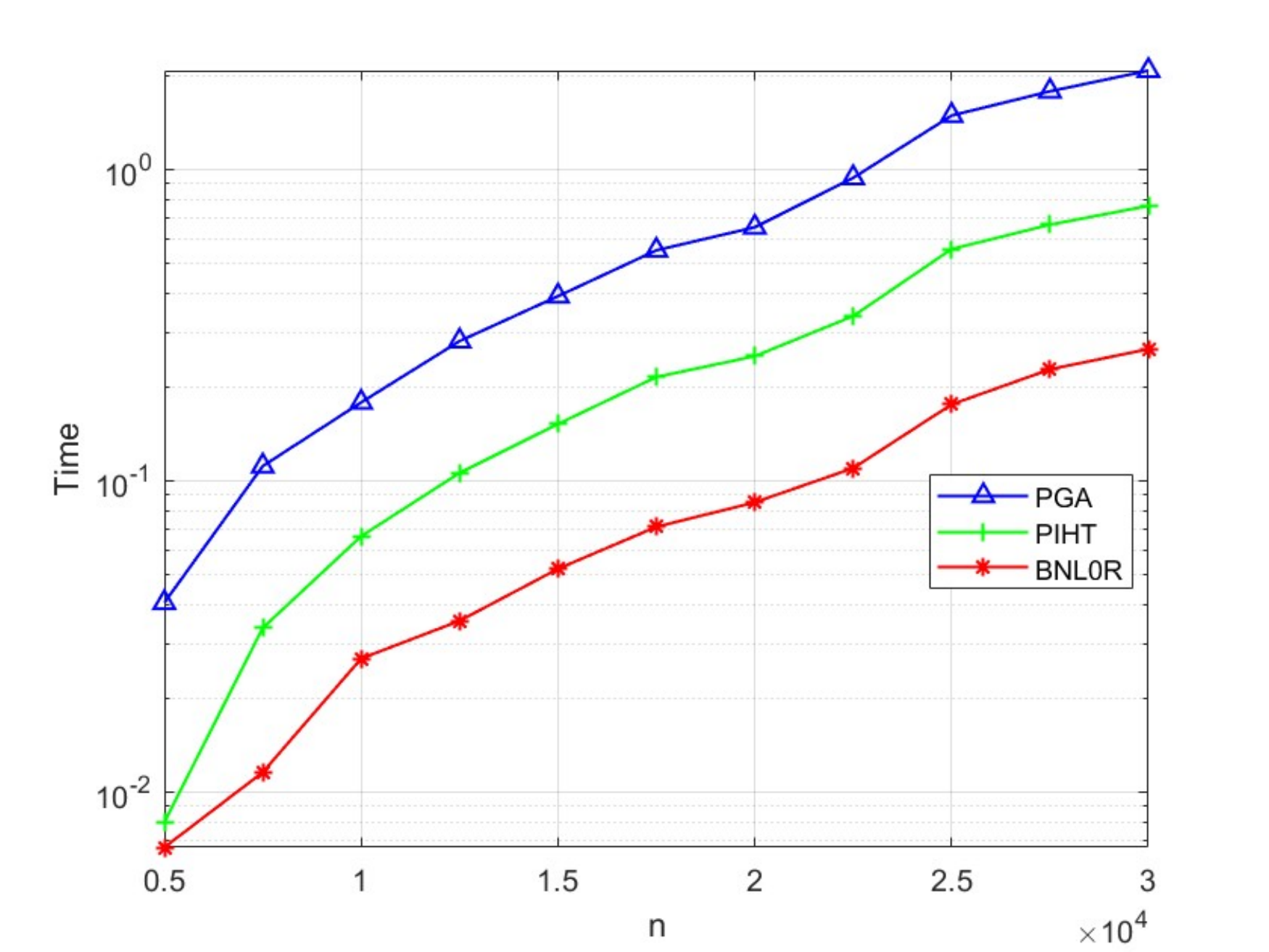
}
	\caption{Average recovery res and time for \textbf{E1} with $m = 0.15n$}
	\label{fig2}
\end{figure}
In Tables \ref{Tab01} and \ref{Tab02} as well as Figures \ref{fig1} and \ref{fig2}, we observe that our algorithm demonstrates better performance in terms of iteration count, CPU time and accuracy in \textbf{E1} compared to PGA and PIHT. This highlights the advantages of second-order algorithms over first-order algorithms. From Tables \ref{Tab01} and \ref{Tab02}, we can see that when the number of rows $ m $ decreases from $ m = 0.25n $ to $ m = 0.15n $, the accuracy of the PGA algorithm significantly declines. This is may attributed to an increase in matrix singularity.

\subsection{Noised signal recovery}
\textbf{E2} We consider the exact recovery $b = Ax^* + \xi$. The matrix $A$ is from \cite{zhang2018acceleratedproximaliterativehard}. In this case, we set the matrix $A$, the groundtruth solution and boundary vector are the same as those in \textbf{E1}. We set $\xi$ be a white Gaussian noise with the signal-to-noise ratio (SNR) by 30 dB. We test different dimensions from $n = 5000$ to $n = 30000$, and different row numbers with $m =0.25n$ and $m = 0.15n$.  We run 20 trials for each example and take the average of the results and round the number of iterations iter to the nearest integer. For \textbf{E2}, we set $\epsilon = 10^{-6}$ in \eqref{eq-stop}.

\begin{table*}[htbp]
\scriptsize
\centering
\caption{\text{Results on noised signal recovery with $m = 0.25n$}}\label{Tab03}
%\begin{tabular}{ccccccc}
\begin{tabular*}{\textwidth}{@{\extracolsep\fill}ccccrcccr}
\toprule
 %\multicolumn{7}{c}{\text{ExamplE1}}  \\
\textbf{Algorithm}   &{n} &  iter     &  time   &   res
&{n}  &  iter      &  time   &   res \\
  \midrule
\text{PGA} & 5000  & 10  & 0.08  & 1.06e-01 & 10000  & 10  & 0.33  & 8.72e-02 \\
\text{PIHT} & 5000  & 12  & 0.03  & 5.83e-03 & 10000  & 12  & 0.21  & 1.98e-02 \\
\text{BNL0R} & 5000  & 10  & 0.03  & 3.79e-03 & 10000  & 10  & 0.11  & 2.24e-03 \\
 \midrule
\text{PGA} & 15000  & 10  & 0.65  & 9.38e-02 & 20000  & 10  & 1.42  & 7.27e-02 \\
\text{PIHT} & 15000  & 12  & 0.41  & 1.99e-02 & 20000  & 12  & 0.90  & 1.93e-02 \\
\text{BNL0R} & 15000  & 10  & 0.17  & 2.26e-03 & 20000  & 10  & 0.41  & 2.01e-03 \\
 \midrule
\text{PGA} & 25000  & 10  & 2.72  & 1.39e-01 & 30000  & 10  & 3.58  & 1.27e-01 \\
\text{PIHT} & 25000  & 12  & 1.68  & 2.02e-02 & 30000  & 12  & 2.55  & 2.01e-02 \\
\text{BNL0R} & 25000  & 10  & 0.71  & 1.94e-03 & 30000  & 10  & 1.08  & 2.04e-03 \\
\bottomrule
\end{tabular*}
\end{table*}

\begin{table*}[htbp]
\scriptsize
\centering
\caption{\text{Results on noised signal recovery with $m = 0.15n$}}\label{Tab04}
%\begin{tabular}{ccccccc}
\begin{tabular*}{\textwidth}{@{\extracolsep\fill}ccccrcccr}
\toprule
 %\multicolumn{7}{c}{\text{ExamplE1}}  \\
\textbf{Algorithm}   &{n} &  iter     &  time   &   res
&{n}  &  iter      &  time   &   res \\
 \midrule
\text{PGA} & 5000  & 10  & 0.04  & 2.26e-01 & 10000  & 10  & 0.20  & 1.22e-01 \\
\text{PIHT} & 5000  & 12  & 0.02  & 1.11e-02 & 10000  & 12  & 0.13  & 1.50e-02 \\
\text{BNL0R} & 5000  & 10  & 0.01  & 3.25e-03 & 10000  & 10  & 0.07  & 4.92e-03 \\
 \midrule
\text{PGA} & 15000  & 10  & 0.43  & 2.22e-01 & 20000  & 10  & 0.74  & 4.18e-01 \\
\text{PIHT} & 15000  & 12  & 0.34  & 1.74e-02 & 20000  & 12  & 0.68  & 1.69e-02 \\
\text{BNL0R} & 15000  & 10  & 0.19  & 4.95e-03 & 20000  & 10  & 0.32  & 4.51e-03 \\
 \midrule
\text{PGA} & 25000  & 10  & 1.06  & 2.44e-01 & 30000  & 10  & 1.51  & 2.92e-01 \\
\text{PIHT} & 25000  & 13  & 1.02  & 1.73e-02 & 30000  & 12  & 1.50  & 1.59e-02 \\
\text{BNL0R} & 25000  & 10  & 0.45  & 4.46e-03 & 30000  & 10  & 0.66  & 3.45e-03 \\
\bottomrule
\end{tabular*}
\end{table*}

\begin{figure}[htbp]
	\centering
	\includegraphics[width=6cm]{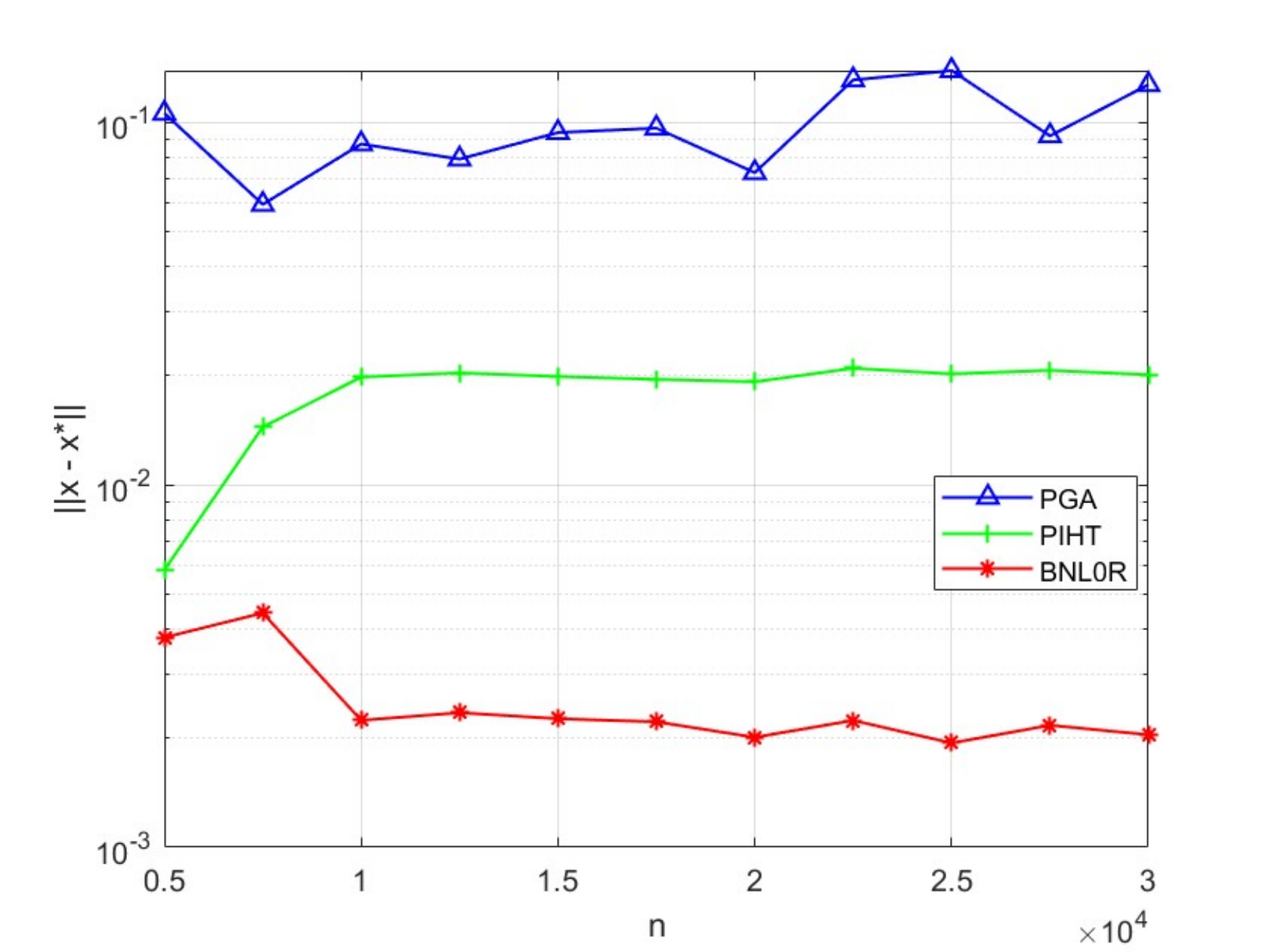
}
    \includegraphics[width=6cm]{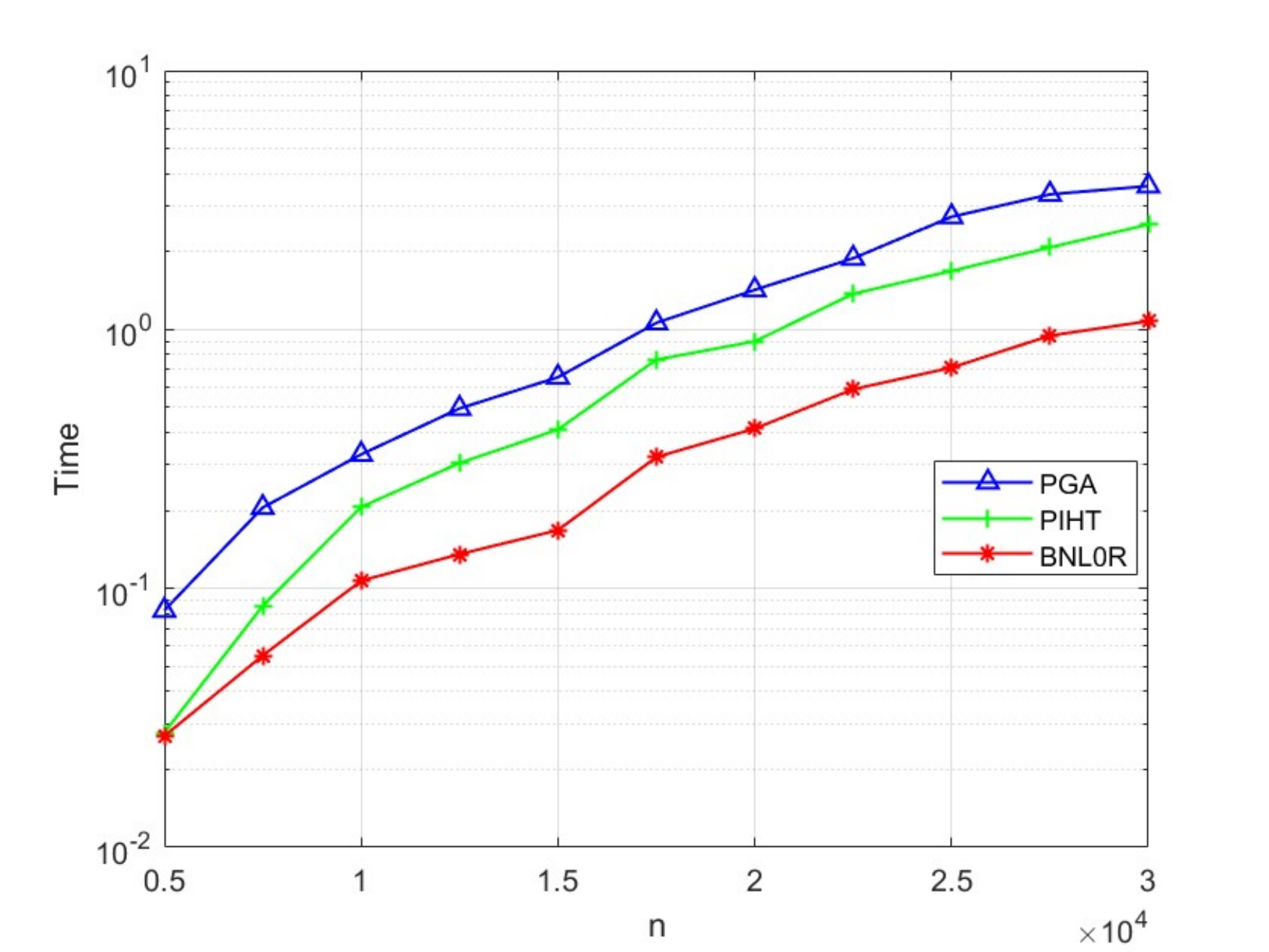
}
	\caption{Average recovery res and time for \textbf{E2} with $m = 0.25n$}
	\label{fig3}
\end{figure}

\begin{figure}[htbp]
	\centering
	\includegraphics[width=6cm]{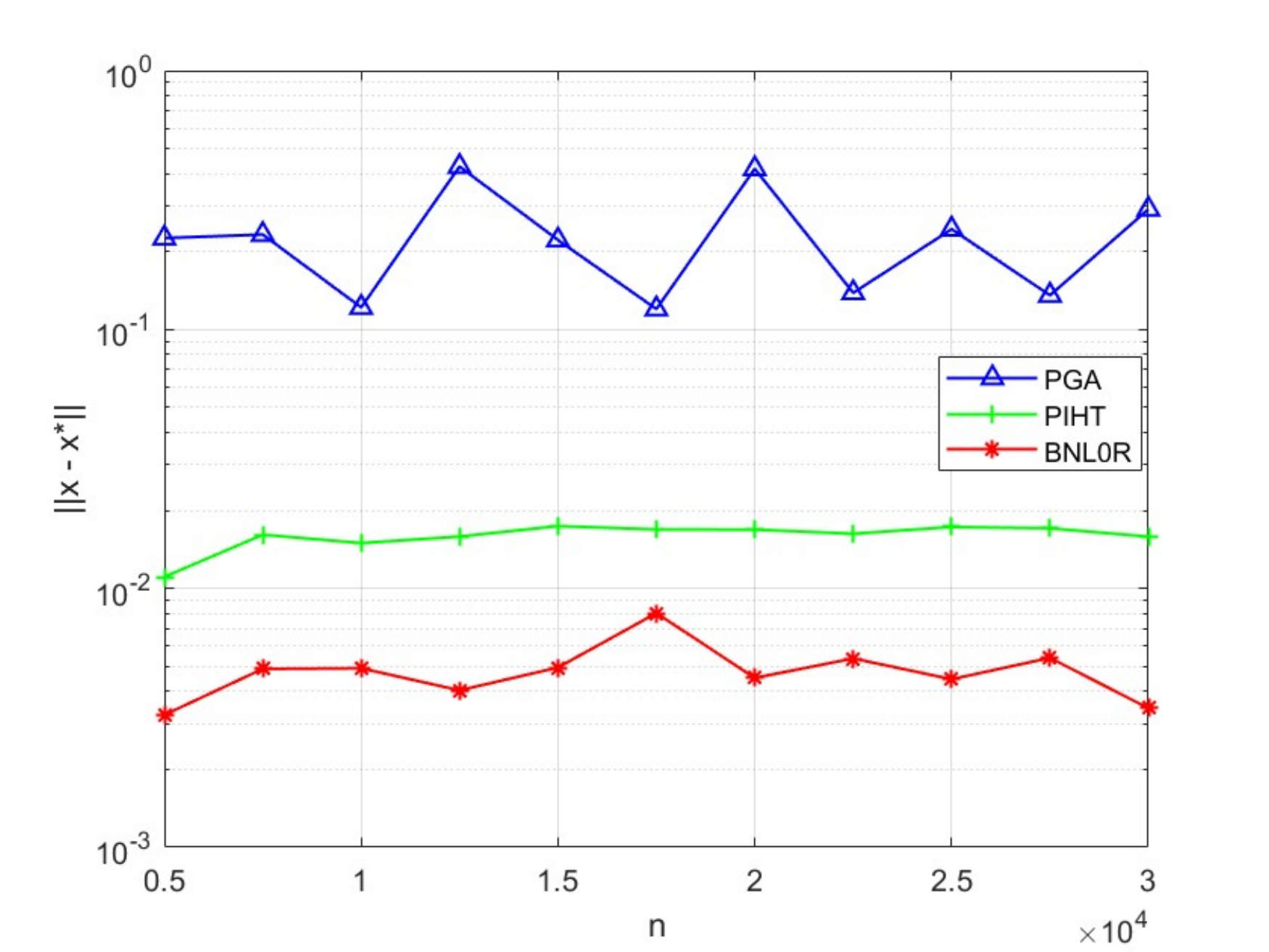
}
    \includegraphics[width=6cm]{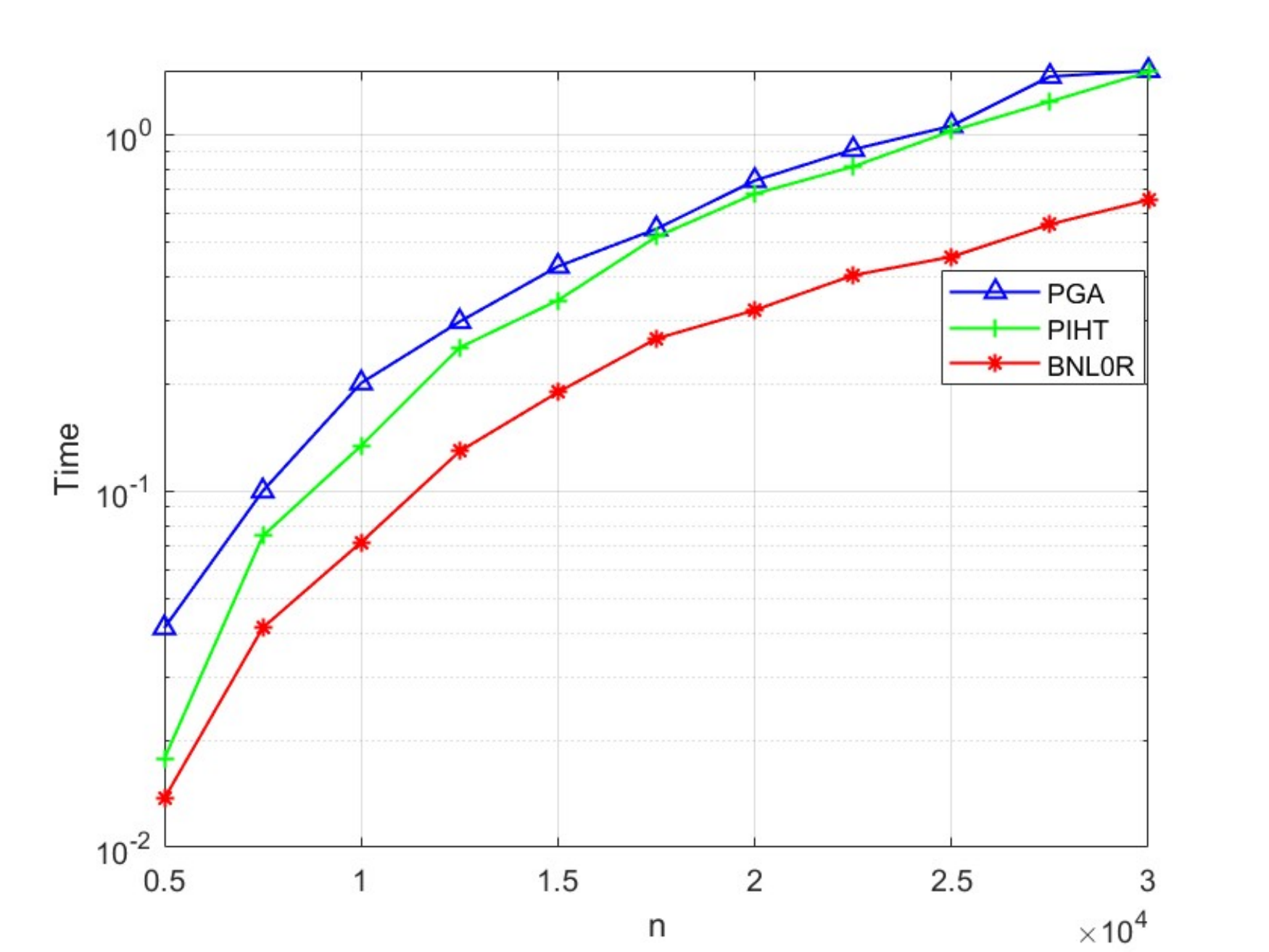
}
	\caption{Average recovery res and time for \textbf{E2} with $m = 0.15n$}
	\label{fig4}
\end{figure}

In Tables \ref{Tab03} and \ref{Tab04}, as well as Figures \ref{fig3} and \ref{fig4}, we also observe that our algorithm posses better performance in terms of iteration numbers, CPU time and accuracy in \textbf{E2} compared to PGA and PIHT. From \ref{Tab03} and \ref{Tab04}, we observe that BNL0R achieves the highest computational accuracy, followed by PIHT, while PGA exhibits the lowest accuracy. This indicates two points: first, it shows the advantages of the second-order algorithms; second, it highlights the superiority of the $\ell_0$ regularization model compared to the $\ell_1$ regularization model.

\subsection{Noised signal recovery with different box constraints }
\textbf{E3} We consider the exact recovery $b = Ax^* + \xi$. We set the number of non-zero components $s = 100$. $A \in \mathbb{R}^{m\times n}$ are random Gaussian matrix . $\xi$ is white Gaussian noise with the signal-to-noise ratio (SNR) by 30 dB. We test different dimensions from $n = 10000$ to $n = 30000$ while $m = 0.25n$. For each dimension case, we uniformly divide the groundtruth solution $x^*$ into four intervals, randomly selecting 10 positions within each interval and assigning random non-zero values. Similarly, we divide the boundary $l$, $u$ into four intervals. That is
\begin{align*}
w^1 &= {\rm{randperm}}(n/4),\ \mathcal{I}^2 = w^1(1:25) , \\
w^2 &= {\rm{randperm}}(n/4) + (2-1)\frac{n}{4},\ \mathcal{I}^2 = w^2(1:25).
\end{align*}
While $w^3$, $w^4$, $\mathcal{I}^3$ and $\mathcal{I}^4$ are similar setting. The groundtruth solution is set as follows
\begin{align*}
x^*(\mathcal{I}^1) &= {\rm{rand}}(25, 1),\ x^*(\mathcal{I}^2) = 2\times {\rm{rand}}(25, 1),\\
x^*(\mathcal{I}^3) &= 3\times {\rm{rand}}(25, 1),\ x^*(\mathcal{I}^4) = 4\times {\rm{rand}}(25, 1).
\end{align*}
The box constraint $[-l, u]$ taken as
\begin{align*}
[-2,2]^{n/4}\times [-3,3]^{n/4}\times  [-4,4]^{n/4}\times  [-5,5]^{n/4}.
\end{align*}
We run 20 trials for each example and take the average of the results and round the number of iterations iter to the nearest integer. For \textbf{E3}, we set $\epsilon = 10^{-6}$ in \eqref{eq-stop}.

\begin{table*}[htbp]
\scriptsize
\centering
\caption{\text{Results on E3 with $m = n$}}\label{Tab05}
%\begin{tabular}{ccccccc}
\begin{tabular*}{\textwidth}{@{\extracolsep\fill}ccccrcccr}
\toprule
 %\multicolumn{7}{c}{\text{ExamplE1}}  \\
\textbf{Algorithm}   &{n} &  iter     &  time   &   res
&{n}  &  iter      &  time   &   res \\
 \midrule
\text{PGA} & 12000  & 10  &  0.45  & 1.74e-01 & 14000  & 10  &  0.63  & 9.47e-02 \\
\text{PIHT} & 12000  & 13  & 0.41  & 2.18e-02 & 14000  & 13  & 0.58  & 2.12e-02 \\
\text{BNL0R} & 12000  & 10  & 0.30  & 7.71e-03 & 14000  & 10  & 0.37  & 6.55e-03 \\
\midrule
\text{PGA} & 16000  & 10  &  0.90  & 1.47e-01 & 18000  & 10  &  0.98  & 1.33e-01 \\
\text{PIHT} & 16000  & 13  & 0.77  & 2.23e-02 & 18000  & 13  & 0.93  & 2.03e-02 \\
\text{BNL0R} & 16000  & 10  & 0.38  & 5.78e-03 & 18000  & 10  & 0.45  & 5.25e-03 \\
 \midrule
\text{PGA} & 20000  & 10  &  1.37  & 3.62e-01 & 22000  & 10  &  1.36  & 2.63e-01 \\
\text{PIHT} & 20000  & 13  & 1.10  & 2.03e-02 & 22000  & 13  & 1.21  & 1.93e-02 \\
\text{BNL0R} & 20000  & 10  & 0.47  & 4.83e-03 & 22000  & 10  & 0.52  & 4.57e-03 \\
 \midrule
\text{PGA} & 24000  & 10  &  1.81  & 1.89e-01 & 26000  & 10  &  2.02  & 2.43e-01 \\
\text{PIHT} & 24000  & 13  & 1.43  & 1.99e-02 & 26000  & 13  & 1.59  & 2.00e-02 \\
\text{BNL0R} & 24000  & 10  & 0.58  & 4.28e-03 & 26000  & 10  & 0.67  & 4.19e-03 \\
 \midrule
\text{PGA} & 28000  & 10  &  2.62  & 1.78e-01 & 30000  & 10  &  3.75  & 9.60e-02 \\
\text{PIHT} & 28000  & 13  & 1.92  & 2.14e-02 & 30000  & 13  & 2.17  & 2.26e-02 \\
\text{BNL0R} & 28000  & 10  & 0.74  & 4.26e-03 & 30000  & 10  & 0.90  & 5.27e-03 \\
\bottomrule
\end{tabular*}
\end{table*}

\begin{figure}[htbp]
	\centering
	\includegraphics[width=6cm]{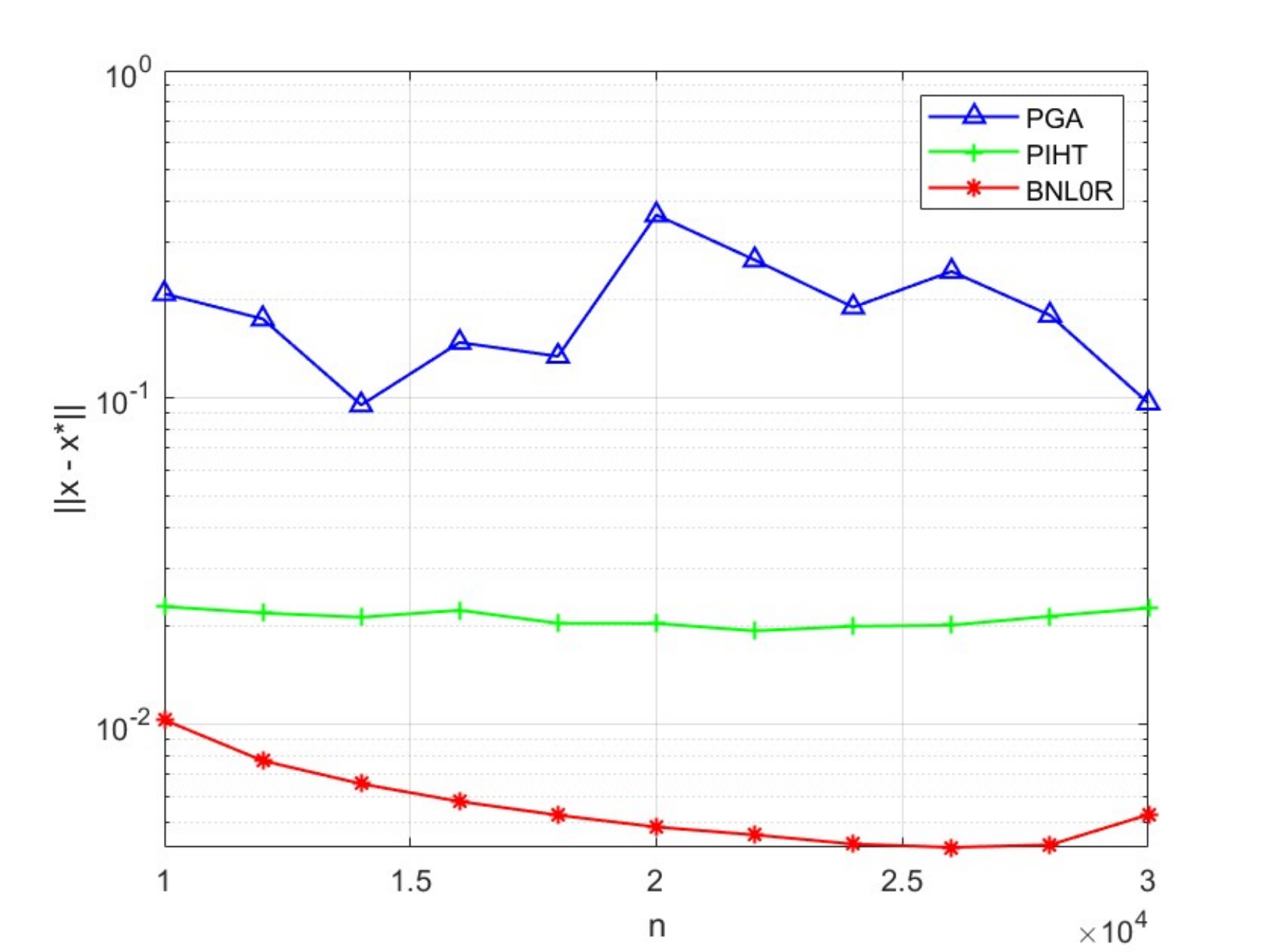
}
    \includegraphics[width=6cm]{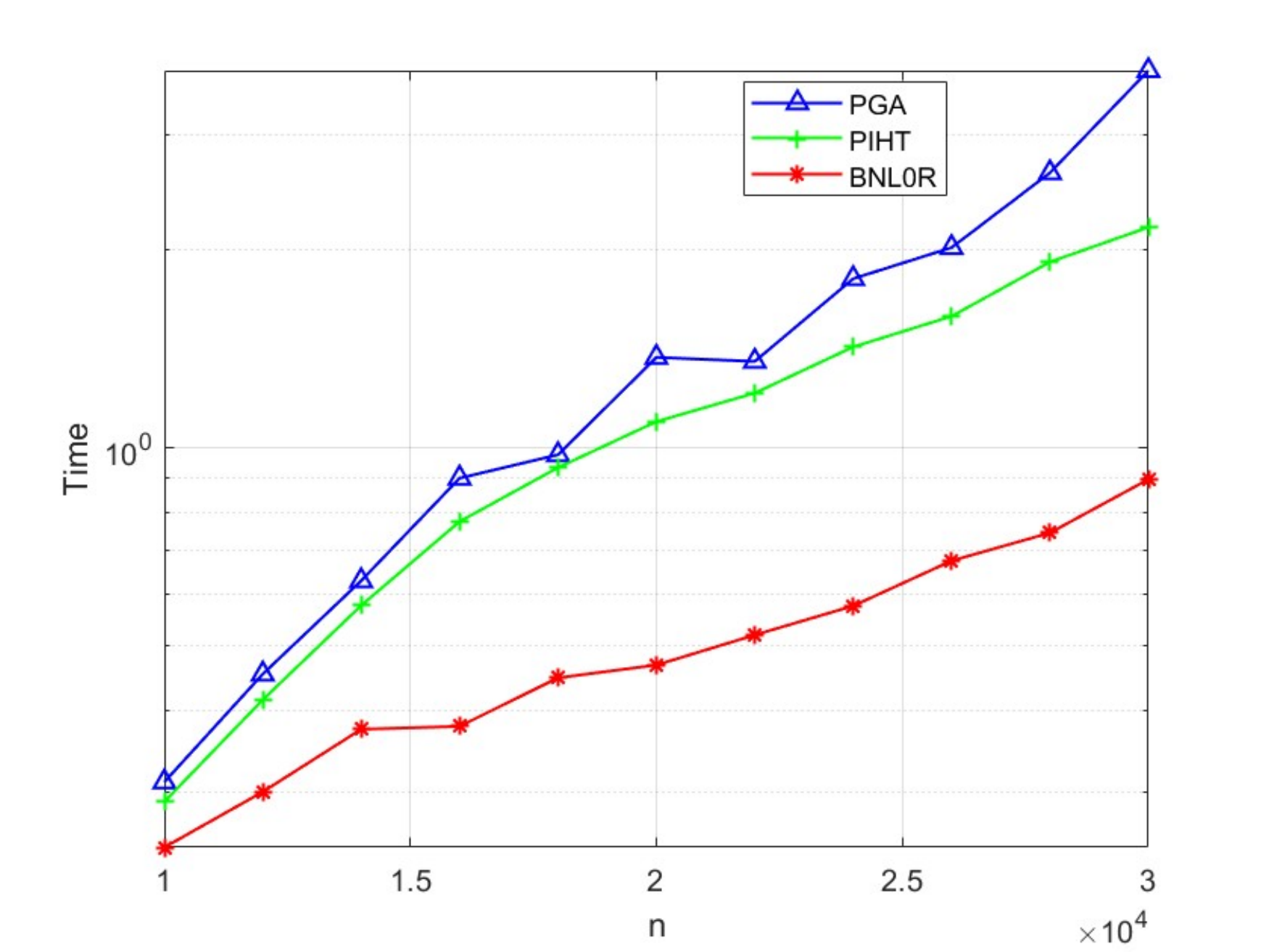
}
	\caption{Average recovery res and time for \textbf{E3} with $m = 0.25n$}
	\label{fig5}
\end{figure}

The computational data for \textbf{E3} is presented in Table \ref{Tab05} and Figure \ref{fig5}. In Figure \ref{fig5}, we observe that BNL0R takes a advantage in terms of computation time and accuracy.

\subsection{2D-image recovery}
\textbf{E4} (2-D image data) \cite{zhou2021newton}
Some images are naturally not sparse themselves but can be sparse under some wavelet transforms. Here, we take advantage of the Daubechies wavelet 1, denoted as $W(\cdot)$. Then the images under this transform (i.e., $x^* := W(\omega)$) are sparse, and $\omega$ is the vectorized intensity of an input image. Therefore, the explicit form of the sampling matrix may not be available. We consider the exact recovery $y = Ax^* + \xi$. We consider a sampling matrix taking the form $A = FW^{-1}$, where $F$ is the partial fast Fourier transform, and $W^{-1}$ is the inverse of $W$. Finally, the added noise $\xi$ has each element $\xi_i \sim {\rm{nf}} \cdot \mathcal{N}$ where $\mathcal{N}$ is the standard normal distribution and $n_f$ is the noise factor. Two typical choices of ${\rm{nf}}$ are considered, namely ${\rm{nf}} \in \{0.001,0.05, 0.1\}$. We set the boundary vector $l=u = 10\times {\rm{ones}}(n,1)$. For this experiment, we compute a gray image (see the original image in Fig. \ref{fig6}) with size $256 \times 256$ (i.e. $n = 256^2 = 65536$) and the sampling sizes $m = 14369$. We compute the peak signal to noise ratio (PSNR) defined by $\text{PSNR} := 10\log_{10}(n\|x - x^*\|^{-2})$ to measure the performance of the method. Note that a larger PSNR implies a better performance. For \textbf{E4}, we set $\epsilon = \|Ax^*-y\|$ in \eqref{eq-stop}.

\begin{table*}[htbp]
\scriptsize
\centering
\caption{\text{Results on 2D-image recovery}}\label{Tab06}
%\begin{tabular}{ccccccc}
\begin{tabular*}{\textwidth}{@{\extracolsep\fill}cccccccccc}
\toprule
\multirow{2}{*}{\text{Method}} &\multicolumn{3}{c}{${\rm{nf}} = 0.01$} &\multicolumn{3}{c}{${\rm{nf}} = 0.05$} &\multicolumn{3}{c}{${\rm{nf}} = 0.1$} \\
\cmidrule{2-10} &PSNR    &time   & $\|x\|_0$ &PSNR     &time   & $\|x\|_0$ &PSNR    &time   & $\|x\|_0$ \\
 \midrule
\text{PGA} & 21.66  & 1.21e-1 & 64471 & 21.37  & 1.26e-1  & 61431 & 20.36  & 1.31e-1  & 59927  \\
\text{PIHT} & 21.76   & 8.04e-2 & 28115 & 21.54  & 7.09e-2 & 23690 & 20.81  & 6.67e-2  & 22298  \\
\text{BNL0R} & 38.98  & 5.50e-1 & 3939  & 25.72 & 3.96e-1 & 2952 & 22.91  & 1.68e-1  & 3281 \\
\bottomrule
\end{tabular*}
\end{table*}

\begin{figure}[htbp]
	\centering
	\includegraphics[width=12cm]{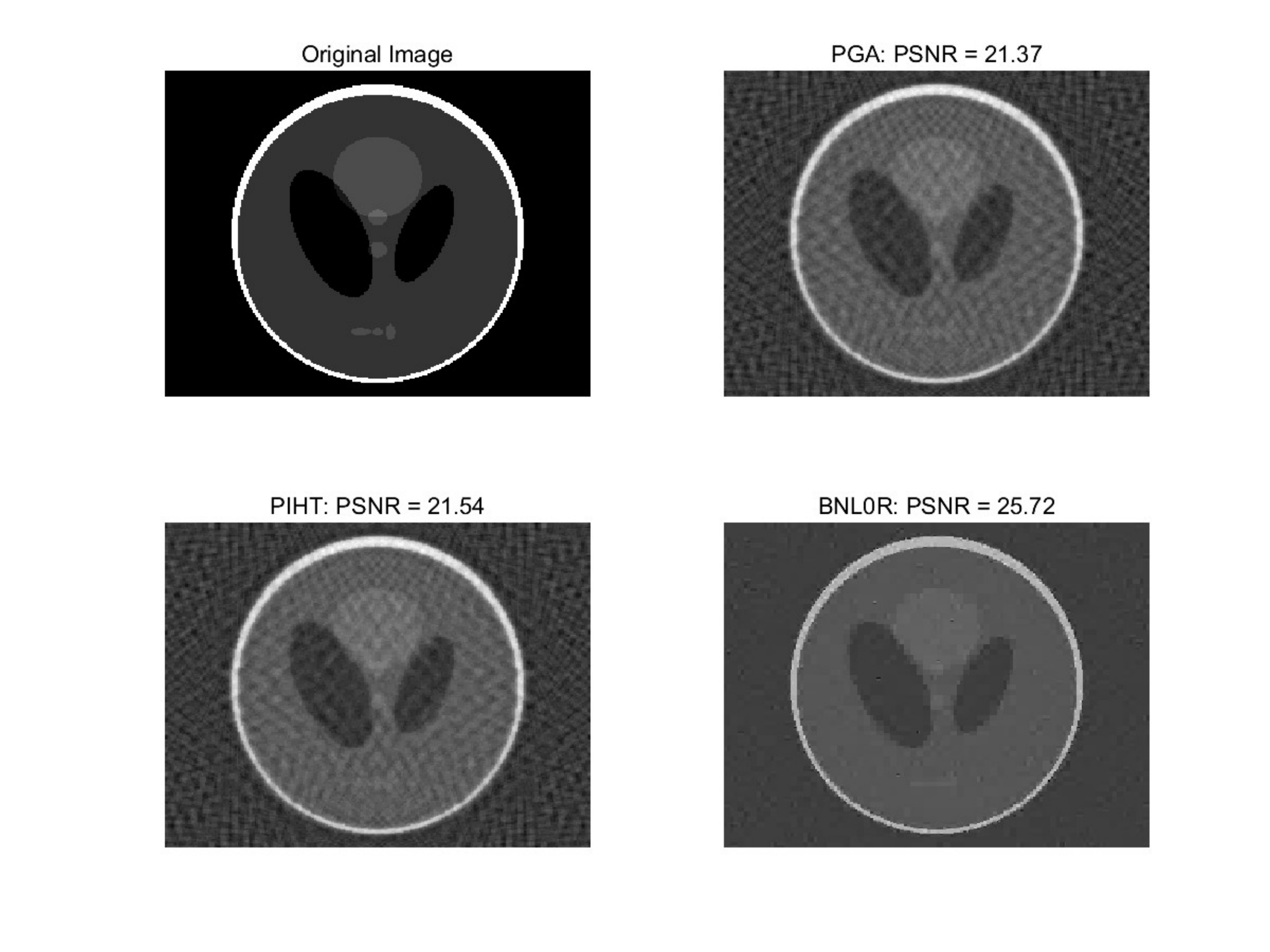
}
	\caption{Recovery results for \textbf{E4} with ${\rm{nf}} = 0.05$}
	\label{fig6}
\end{figure}

For \textbf{E4}, we conducted experiments using the four aforementioned algorithms. The computational results are presented in Table \ref{Tab06} and Figure \ref{fig6}. From Figure \ref{fig6}, we observe that BNL0R achieve the highest PSNR. This indicates that our algorithm maintains its efficiency with some cases of image recovery.

\section{Conclusion}
In this paper, we proposed a subspace Newton's method (BNL0R) for $\ell_0$-regularized optimization with box constraint. We first identify the active and inactive variables of the support set which is defined by  $\tau$-stationary point of \eqref{BL0}, Then we use the Newton's method to update the inactive variables and use the proximal gradient method to update the active variables. Under appropriate conditions, we prove the global convergence and the local quadratic convergence rate. The efficiency of the proposed method is demonstrated by the experimental results.

%%===========================================================================================%%
%% If you are submitting to one of the Nature Portfolio journals, using the eJP submission   %%
%% system, please include the references within the manuscript file itself. You may do this  %%
%% by copying the reference list from your .bbl file, paste it into the main manuscript .tex %%
%% file, and delete the associated \verb+\bibliography+ commands.                            %%
%%===========================================================================================%%

\bibliography{sn-bibliography}% common bib file
%% if required, the content of .bbl file can be included here once bbl is generated
%%\input sn-article.bbl

\end{document}